\long\def\alert#1{\parindent2em\smallskip\hbox to\hsize%
{\hskip\parindent\vrule%
\vbox{\advance\hsize-2\parindent\hrule\smallskip\parindent.4\parindent%
\narrower\noindent#1\smallskip\hrule}\vrule\hfill}\smallskip\parindent0pt}
 \newtheorem{thm}{Theorem}[section]
 \newtheorem{cor}[thm]{Corollary}
 \newtheorem{lem}[thm]{Lemma}
 \newtheorem{prop}[thm]{Proposition}
 \theoremstyle{definition}
 \newtheorem{defn}[thm]{Definition}
 \theoremstyle{remark}
 \newtheorem{ex}[thm]{Example}
 \numberwithin{equation}{section}
\begin{document}

%
\title[ Schur multiplier of nilpotent Lie algebras]
{Some results on the Schur multiplier of nilpotent Lie algebras}
\author[P. Niroomand]{Peyman Niroomand}
\address{School of Mathematics and Computer Science\\
Damghan University, Damghan, Iran}
\email{niroomand@du.ac.ir, p$\_$niroomand@yahoo.com}

\author[F. Johari]{Farangis Johari}
\address{Department of Pure Mathematics\\
Ferdowsi University of Mashhad, Mashhad, Iran}
\email{farangisjohary@yahoo.com}
%
%


\keywords{Tensor square, exterior square, capability, Schur multiplier, $p$-groups, relative Schur multiplier, locally finite groups}

\date{\today}

\begin{abstract} For a non-abelian Lie algebra $L$ of dimension $n$ with the derived subalgebra of dimension $m$ , the author earlier proved that the dimension of its Schur multiplier is bounded by  $\frac{1}{2}(n+m-2)(n-m-1)+1$. In the current work, we obtain the class of all nilpotent Lie algebras which attains the above bound. Furthermore, we also improve this bound as much as possible.

 \end{abstract}

\maketitle

\section{Introduction,  Motivation and Preliminaries}
Analogous to the Schur multiplier of a group, the Schur multiplier of a Lie algebra, $\mathcal{M}(L)$, can be defined as $ \mathcal{M}(L)\cong R\cap F^2/[R,F]$ where $L\cong F/R $ and $ F $ is a free Lie algebra (see  \cite{ellis, ni, ni1} for more information).

There are several works to show that the results on the Schur multiplier of finite $p$-groups ($p$ a
prime) have analogues on the Schur multiplier $\mathcal{M}(L)$ of a nilpotent Lie algebra $L$ of
dimension $n$. For instance, in \cite[Theorem 3.1]{ni}, the author proved that for a non-abelian nilpotent Lie algebra of dimension $n$, we have
\begin{equation}\label{eq}
\dim \mathcal{M}(L)\leq \frac{1}{2}(n+m-2)(n-m-1)+1
\end{equation}and the equality holds when $L\cong H(1)\oplus A(n-3),$
 in where $H(m)$ and $A(n)$ denote
the Heisenberg Lie algebra of dimension $ 2m+1$ (a Lie algebra such that $L^2 = Z(L)$ and $\dim L^2 = 1$) and abelian Lie algebra of dimension $n$.
This improves the earlier results obtained by the same author in \cite[Main Theorem]{ni0} for Lie algebras.
Recently, the structure of all $p$-groups of class two for which $|\mathcal{M}(G)|$ attains the bound \cite[Theorem 3.1]{ni} is classified in \cite{rai2}, and then \cite[Theorem 3.1]{ni} has been improved by Hatui in \cite{H}.

In the present article,  we give some new inequalities on the exterior square and the Schur multiplier of Lie algebras. Then we classify all
nilpotent Lie algebras that attain the bound \ref{eq}. More precisely, they are exactly nilpotent Lie algebras of class two. Moreover, for nilpotent Lie algebras of class at last $3,$ we improve \ref{eq} as much as possible. It develops some key results of \cite{H, rai2} for the class of Lie algebras by a different way.

For the convenience of the reader, we give some results without proofs which will be used in the next section. For a Lie algebra $L,$ we use notation $L^{(ab)}$ instead of $L/L^2$.
\begin{lem}\label{1kg}
 \cite[Proposition 3]{ba1}
Let $ A$ and $ B $ be two Lie algebras. Then
\[ \mathcal{M}(A\oplus B)\cong   \mathcal{M}(A) \oplus  \mathcal{M}(B) \oplus (A^{(ab)}\otimes_{mod} B^{(ab)}), \] in where $A^{(ab)}\otimes_{mod} B^{(ab)}$ is the standard tensor product $A$ and $B$.
\end{lem}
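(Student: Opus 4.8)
The plan is to deduce the decomposition from the Künneth theorem in the homology of Lie algebras. The first ingredient I would use is the standard identification $\mathcal{M}(L)\cong H_2(L)$, where $H_\bullet(L)$ denotes the Chevalley--Eilenberg homology of $L$ with trivial coefficients in the ground field; this is the Lie-theoretic Hopf formula, and it also supplies $H_0(L)\cong k$ and $H_1(L)\cong L/L^2=L^{(ab)}$. With these in hand the whole statement is equivalent to computing $H_2(A\oplus B)$ and reading off its three graded pieces.

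Next I would observe that for a direct sum the Chevalley--Eilenberg chain complex decouples. As graded vector spaces $\Lambda^\bullet(A\oplus B)\cong\Lambda^\bullet A\otimes_k\Lambda^\bullet B$, and since $[a,b]=0$ for $a\in A$ and $b\in B$, the Chevalley--Eilenberg differential on $\Lambda^\bullet(A\oplus B)$ contains no mixed brackets; hence it matches, up to the usual Koszul sign, the tensor-product differential $d_A\otimes 1\pm 1\otimes d_B$. Thus $\Lambda^\bullet(A\oplus B)$ is the tensor product of the complexes computing $H_\bullet(A)$ and $H_\bullet(B)$, and since we work over a field the algebraic Künneth theorem applies with no correction term:
\[
H_n(A\oplus B)\;\cong\;\bigoplus_{p+q=n}H_p(A)\otimes_k H_q(B).
\]
Setting $n=2$, the three summands with $p+q=2$ are $H_2(A)\otimes_k H_0(B)\cong\mathcal{M}(A)$, $H_1(A)\otimes_k H_1(B)\cong A^{(ab)}\otimes_{mod} B^{(ab)}$, and $H_0(A)\otimes_k H_2(B)\cong\mathcal{M}(B)$, which together give exactly the asserted isomorphism.

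The step I expect to be the main obstacle is the careful verification that the Chevalley--Eilenberg differential of $A\oplus B$ really is the tensor-product differential of the two factor complexes — one has to reconcile the combinatorial signs in the Chevalley--Eilenberg formula with the Koszul signs — together with keeping the coefficient field fixed throughout, so that both $\otimes_{mod}$ and $\mathcal{M}(-)$ are taken over the same field and the Künneth theorem contributes no $\operatorname{Tor}$ term. An alternative route avoiding homological algebra would start from free presentations $A\cong F_1/R_1$ and $B\cong F_2/R_2$, take $F$ to be the free Lie algebra on the union of the two free generating sets (so that $F$ surjects onto $A\oplus B$ with kernel the ideal $S$ generated by $R_1$, $R_2$ and $[F_1,F_2]$), and compute $(S\cap F^2)/[S,F]$ directly; that argument is more self-contained but the bookkeeping with the free product $F_1\ast F_2$ is considerably heavier, so I would prefer the Künneth approach.
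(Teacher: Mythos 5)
Your argument is correct, but note that the paper offers no proof of this statement at all: Lemma \ref{1kg} is quoted verbatim from Batten--Moneyhun--Stitzinger \cite[Proposition 3]{ba1}, and that source argues essentially in the style of your ``alternative route,'' working from the free-presentation definition $\mathcal{M}(L)\cong (R\cap F^2)/[R,F]$ (equivalently, through the identification of the multiplier with second (co)homology with trivial coefficients) rather than writing down the Chevalley--Eilenberg/K\"unneth computation you give. Your route is a genuinely different and perfectly valid one: once you grant the Lie-theoretic Hopf formula $\mathcal{M}(L)\cong H_2(L;k)$ together with $H_0\cong k$ and $H_1\cong L^{(ab)}$, the observation that $[A,B]=0$ makes the Chevalley--Eilenberg complex of $A\oplus B$ the tensor product of the two factor complexes, and over a field the K\"unneth theorem has no $\operatorname{Tor}$ correction, so reading off the $p+q=2$ summands gives exactly $\mathcal{M}(A)\oplus\mathcal{M}(B)\oplus\bigl(A^{(ab)}\otimes_{mod}B^{(ab)}\bigr)$. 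What this buys is brevity and generality (no finite-dimensionality or nilpotency is needed, and the same computation yields all $H_n(A\oplus B)$ at once); what it costs is reliance on the homological identification of $\mathcal{M}(L)$, which is itself a theorem of comparable depth to the statement being proved, whereas the free-presentation argument is self-contained relative to the definition of $\mathcal{M}(L)$ actually used in this paper. The only points you should make explicit if you write this up are the ones you already flag: the sign bookkeeping showing the differential on $\Lambda^{\bullet}(A\oplus B)$ is $d_A\otimes 1\pm 1\otimes d_B$, and the fact that the ground field is fixed so that $\otimes_{mod}$ and the K\"unneth isomorphism are over the same field.
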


Schur multipliers of abelian and Heisenberg Lie algebras are well known. See for instance
  \cite[Lemma 2.6]{ni3}.

\begin{lem}\label{2}
We have
\begin{itemize}
\item[$(i)$]$\dim \mathcal{M}(A(n))=\dfrac{1}{2}n(n-1).$
\item[$(ii)$]$\dim \mathcal{M}(H(1))=2.$
\item[$(iii)$]$\dim \mathcal{M}(H(m))=2m^2-m-1$ for all $ m\geq 2.$
\end{itemize}
\end{lem}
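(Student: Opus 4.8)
The plan is to deduce everything from the Hopf-type formula $\mathcal{M}(L)\cong(R\cap F^{2})/[R,F]$ of the introduction, applied to carefully chosen free presentations $L\cong F/R$, together with the grading $F=\bigoplus_{k\ge 1}L_{k}$ of a free Lie algebra coming from its lower central series; a Hall basis makes the low–degree layers explicit (for $F$ of rank $d$ one has $\dim L_{2}=\binom{d}{2}$, and $\dim L_{3}=2$ when $d=2$). For $(i)$ the fastest route is induction through Lemma \ref{1kg}: since $A(1)\cong F/F^{2}$ with $F$ of rank $1$ has $F^{2}=0$, we get $\mathcal{M}(A(1))=0$; writing $A(n)\cong A(n-1)\oplus A(1)$ and using $A(1)^{(ab)}=A(1)$, Lemma \ref{1kg} gives $\dim\mathcal{M}(A(n))=\dim\mathcal{M}(A(n-1))+(n-1)$, whence $\dim\mathcal{M}(A(n))=\sum_{i=1}^{n-1}i=\tfrac12 n(n-1)$. (Equivalently, $A(n)\cong F/F^{2}$ yields directly $\mathcal{M}(A(n))\cong F^{2}/F^{3}\cong L_{2}$.)

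For $(ii)$ and $(iii)$ take $F$ free on $a_{1},\dots,a_{m},b_{1},\dots,b_{m}$ and present $H(m)\cong F/R$ with $R=\langle T\rangle+\gamma_{3}(F)$, where $T=\{[a_{i},b_{i}]-[a_{1},b_{1}]:2\le i\le m\}\cup\{[a_{i},a_{j}]:i<j\}\cup\{[b_{i},b_{j}]:i<j\}\cup\{[a_{i},b_{j}]:i\ne j\}$. A Hall-basis count in $L_{2}$ shows the elements of $T$ are linearly independent, that $L_{2}=S\oplus\langle[a_{1},b_{1}]\rangle$ with $S:=\mathrm{span}(T)$ of dimension $2m^{2}-m-1$, and (hence) that $\dim(F/R)=2m+1$ as required. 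Because $T$ is homogeneous of degree $2$, one reads off $R\cap F^{2}=R=S\oplus\gamma_{3}(F)$ and $[R,F]=[S,L_{1}]+\gamma_{4}(F)$, so $\mathcal{M}(H(m))\cong S\oplus\bigl(L_{3}/[S,L_{1}]\bigr)$. If $m=1$ then $T=\varnothing$, so $S=0$ and $\dim\mathcal{M}(H(1))=\dim L_{3}=2$. If $m\ge 2$ the claim reduces to $[S,L_{1}]=L_{3}$; since $L_{3}=[L_{2},L_{1}]=[S,L_{1}]+[\,[a_{1},b_{1}],L_{1}\,]$, it suffices to show $[\,[a_{1},b_{1}],g\,]\in[S,L_{1}]$ for every generator $g$. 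When $g\notin\{a_{1},b_{1}\}$ one application of the Jacobi identity expresses this bracket through terms of the form $[a_{i},a_{j}]$, $[b_{i},b_{j}]$ or $[a_{i},b_{j}]$ with $i\ne j$, all lying in $S$; when $g\in\{a_{1},b_{1}\}$ one first rewrites $[a_{1},b_{1}]\equiv[a_{2},b_{2}]\pmod S$ (valid since $m\ge 2$) and then applies Jacobi. Therefore $L_{3}/[S,L_{1}]=0$ and $\dim\mathcal{M}(H(m))=2m^{2}-m-1$.

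The step I expect to be the genuine obstacle is the identity $[S,L_{1}]=L_{3}$ for $m\ge 2$, and within it the case $g\in\{a_{1},b_{1}\}$: a naive Jacobi expansion of $[\,[a_{1},b_{1}],a_{1}\,]$ does not close up, and the substitution $[a_{1},b_{1}]\equiv[a_{2},b_{2}]\pmod S$ is precisely what is unavailable when $m=1$, which explains why $(ii)$ is exceptional. One must also double-check the displayed presentation of $H(m)$, but that is just the dimension count above. A less computational alternative is to feed the central extension $0\to Z(H(m))\to H(m)\to V\to 0$, with $V=H(m)^{(ab)}$ abelian of dimension $2m$, into the low-degree exact sequence of the Hochschild--Serre spectral sequence for Lie algebra homology, obtaining $\Lambda^{3}V\xrightarrow{\,d\,}Z(H(m))\otimes V\to\mathcal{M}(H(m))\to\Lambda^{2}V\to Z(H(m))\to 0$, in which the last map is the nondegenerate symplectic bracket form and $d$ is contraction with it; since $d=0$ for $m=1$ and $d$ is surjective for $m\ge 2$, one again recovers $\dim\mathcal{M}(H(1))=2$ and $\dim\mathcal{M}(H(m))=2m^{2}-m-1$.
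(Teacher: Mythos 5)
Your proof is correct. Note, though, that the paper does not prove Lemma \ref{2} at all: it is quoted as a known fact, with a citation to \cite[Lemma 2.6]{ni3} (the computations go back to Batten--Moneyhun--Stitzinger and Moneyhun), so there is no internal argument to compare against; what you supply is a self-contained replacement. Your route through the Hopf-type formula $\mathcal{M}(L)\cong (R\cap F^2)/[R,F]$ with a homogeneous presentation is sound: for $(i)$ the induction via Lemma \ref{1kg} (or the direct observation $A(n)\cong F/F^2$, $\mathcal{M}(A(n))\cong F^2/F^3$) is exactly right; for $(ii)$--$(iii)$ your set $T$ has $(m-1)+2\binom{m}{2}+m(m-1)=2m^2-m-1$ elements, is visibly independent in $L_2$, and since $[S,F]\subseteq\gamma_3(F)$ the subspace $S\oplus\gamma_3(F)$ really is an ideal, with $F/R\cong H(m)$ by the dimension count, so $R\cap F^2=R$ and $[R,F]=[S,L_1]+\gamma_4(F)$ as you claim, giving $\mathcal{M}(H(m))\cong S\oplus\bigl(L_3/[S,L_1]\bigr)$. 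You correctly isolate the one delicate point: $[[a_1,b_1],g]\in[S,L_1]$ needs the substitution $[a_1,b_1]\equiv[a_2,b_2]\pmod S$ when $g\in\{a_1,b_1\}$, which is exactly what fails for $m=1$ and accounts for the exceptional value $\dim\mathcal{M}(H(1))=\dim L_3=2$. The alternative sketch via the Ganea/Hochschild--Serre five-term-type sequence for the central extension $0\to Z(H(m))\to H(m)\to V\to 0$ is consistent with the computation, but as stated it leans on the surjectivity of the contraction map $\Lambda^3V\to Z\otimes V$ for $m\ge 2$, which would itself need a short verification; the explicit free-presentation argument you give first is the one that stands on its own.
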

Our next aim is to exhibit a close relation between the Lie algebra $\mathcal{M}(L)$ and $\mathcal{M}(L/K)$ where
$K$ is an ideal of $L$.

\begin{lem}\cite[Corollary 2.3]{ni}\label{d1} Let $L$ be a finite dimensional Lie algebra, $K$ an ideal of $L$
and $H = L/K.$ Then\[
\dim \mathcal{M}(L)+\dim(L^2\cap K) \leq \dim \mathcal{M}(H) +\dim \mathcal{M}(K)+\dim(H^{(ab)} \otimes_{mod} K^{(ab)}).\]
\end{lem}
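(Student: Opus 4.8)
The plan is to argue at the level of free presentations. Fix a free Lie algebra $F$, an ideal $R$ with $L\cong F/R$, and the ideal $S$ with $R\subseteq S\lhd F$ and $K\cong S/R$, so that $H=L/K\cong F/S$; I will also assume $L$ nilpotent, which is the case used in the sequel. The Hopf-type formula gives $\mathcal M(L)\cong (R\cap F^2)/[R,F]$ and $\mathcal M(H)\cong (S\cap F^2)/[S,F]$, and since a subalgebra of a free Lie algebra is free (Shirshov--Witt), $S$ is free and $\mathcal M(K)\cong (R\cap S^2)/[R,S]$. One checks directly that $L^2\cap K\cong (F^2\cap S)/(F^2\cap R)$. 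As $[R,F]\subseteq[S,F]\subseteq F^2\cap S$ and each of these quotients contains a fixed term of the lower central series of $F$, they are all finite dimensional; cancelling $\dim(R\cap F^2)$ then shows that the inequality of the lemma is equivalent to
\[
\dim\frac{[S,F]}{[R,F]}\ \le\ \dim\mathcal M(K)+\dim\bigl(H^{(ab)}\otimes_{mod}K^{(ab)}\bigr).
\]
Via the five-term exact sequence $\mathcal M(L)\to\mathcal M(H)\to K/[K,L]\to L^{(ab)}\to H^{(ab)}\to 0$ this left-hand term is $\dim[K,L]+\dim\ker\bigl(\mathcal M(L)\to\mathcal M(H)\bigr)$, which is how the two sides of the lemma differ.

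Next I would compute $[S,F]/[R,F]$ inside $\widetilde F:=F/[R,F]$. Choosing $F$ on $\dim L^{(ab)}$ generators, $\widetilde R:=R/[R,F]$ is central in $\widetilde F$ and $\widetilde R\cap\widetilde F^2=\widetilde R=\mathcal M(L)$, i.e.\ $\widetilde F$ is a stem cover of $L$; write $\widetilde S:=S/[R,F]$, so $K\cong\widetilde S/\widetilde R$, $H\cong\widetilde F/\widetilde S$ and $[S,F]/[R,F]=[\widetilde S,\widetilde F]$. Pick a subspace $W\subseteq\widetilde F$ mapping isomorphically onto $H^{(ab)}=\widetilde F/(\widetilde F^2+\widetilde S)$; since $\widetilde F$ is nilpotent this forces $\widetilde F=\langle W\rangle+\widetilde S$, and $\dim W=\dim H^{(ab)}$. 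A straightforward induction on bracket length, using the Jacobi identity and that $\widetilde S$ is an ideal, then yields $[\widetilde S,\langle W\rangle]\subseteq\widetilde S^2+[\widetilde S,W]$, hence
\[
[\widetilde S,\widetilde F]=\widetilde S^2+[\widetilde S,W].
\]

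Since $[\widetilde S^2,\widetilde F]\subseteq\widetilde S^2$ and $\widetilde R$ is central, the bilinear map $(s,w)\mapsto[s,w]+\widetilde S^2$ kills $\widetilde S^2$ and $\widetilde R$ in the first slot, hence factors through $K^{(ab)}\otimes_{mod}W$ and surjects onto $[\widetilde S,\widetilde F]/\widetilde S^2$; therefore $\dim[\widetilde S,\widetilde F]\le\dim\widetilde S^2+\dim(H^{(ab)}\otimes_{mod}K^{(ab)})$, and it remains to prove $\dim\widetilde S^2\le\dim\mathcal M(K)$. Here the image of $\widetilde S^2$ in $L$ is $K^2$ with kernel $\widetilde R\cap\widetilde S^2$, and $\widetilde R\cap\widetilde S^2\cong(R\cap S^2)/([R,F]\cap S^2)$ is a homomorphic image of $\mathcal M(K)=(R\cap S^2)/[R,S]$ because $[R,S]\subseteq[R,F]\cap S^2$; one analyses the difference through the decomposition $[R,F]=[R,S]+[R,W']$ for a vector-space complement $W'$ of $S$ in $F$. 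The main obstacle is precisely this last inequality: the crude bound only gives $\dim\widetilde S^2\le\dim K^2+\dim\mathcal M(K)$, and closing the gap of $\dim K^2$ amounts to producing enough linearly independent elements of $([R,F]\cap S^2)\smallsetminus[R,S]$ — equivalently, to showing that $\ker(\mathcal M(L)\to\mathcal M(H))$, after accounting for $[K,L]$, is no larger than $\mathcal M(K)$. (This is also where the hypothesis that $K$ is a proper ideal enters: for $K=L$ one has $H=0$ and the asserted inequality fails as soon as $L^2\neq0$.)
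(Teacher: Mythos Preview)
The paper does not prove this lemma; it is quoted verbatim from \cite[Corollary~2.3]{ni}, so there is no ``paper's own proof'' to compare against. Your write-up is not a completed proof either: you explicitly leave open the inequality $\dim\widetilde S^{2}\le\dim\mathcal M(K)$, establishing only the crude bound $\dim\widetilde S^{2}\le\dim K^{2}+\dim\mathcal M(K)$ and noting that the missing $\dim K^{2}$ is the obstacle.

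That gap is genuine and cannot be closed in the generality stated. Your observation about $K=L$ already shows the asserted inequality fails whenever $L^{2}\neq 0$; more tellingly, take $L=A\oplus B$ with $B$ non-abelian and $K=B$, $H=A$. Then Lemma~\ref{1kg} gives $\dim\mathcal M(L)=\dim\mathcal M(A)+\dim\mathcal M(B)+\dim(A^{(ab)}\otimes_{mod}B^{(ab)})$ and $L^{2}\cap K=B^{2}$, so the inequality would force $\dim B^{2}\le 0$. Thus the lemma, as printed, is false for arbitrary ideals $K$; some hypothesis (e.g.\ $K$ central, or at least $K$ abelian) is missing.

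Where the paper actually invokes Lemma~\ref{d1} (Propositions~\ref{fjj1} and~\ref{fjj19}) the ideal $K$ is one-dimensional and contained in $Z(L)$, and in that regime your argument \emph{does} finish: if $K$ is central then $K^{2}=0$, hence $\widetilde S^{2}\subseteq\widetilde R$, i.e.\ $S^{2}\subseteq R$, so $\widetilde S^{2}\cong (R\cap S^{2})/(S^{2}\cap[R,F])$ is a quotient of $\mathcal M(K)=(R\cap S^{2})/[R,S]$ and the needed bound $\dim\widetilde S^{2}\le\dim\mathcal M(K)$ follows. So your framework is sound for the cases the paper needs; it is the lemma's stated level of generality that is defective, not your method.
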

The next lemma gives an upper bound for the Schur multiplier of an $n$-dimensional nilpotent Lie algebra with the
derived subalgebra of maximum dimension.

\begin{lem}\cite[Theorem 3.1]{ni1}\label{35}
 An $n$-dimensional nilpotent Lie algebra $ L$ in which $\dim L^2 = n - 2$
and $n \geq 4$ has $\dim \mathcal{M}(L) \leq \dim L^2.$
\end{lem}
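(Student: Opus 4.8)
The plan is to argue by induction on $n$. For the base case $n=4$ (so $\dim L^2 = 2$), first note that a $4$-dimensional nilpotent Lie algebra with $\dim L^2 = 2$ must be the filiform one: since $L/L^2$ is $2$-dimensional, $L$ is generated by two elements $x_1, x_2$, and then the chain $L \supsetneq L^2 \supsetneq L^3 \supsetneq 0$ is forced to have dimensions $4, 2, 1, 0$, which pins down the nonzero brackets as $[x_1,x_2]=x_3$, $[x_1,x_3]=x_4$. For this algebra a direct computation from a free presentation (or the known value of the Schur multiplier of the filiform Lie algebra in each dimension) gives $\dim \mathcal{M}(L) = 2 = \dim L^2$, settling the base case, in fact with equality.

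For the inductive step, assume $n \geq 5$ and that the statement holds in dimension $n-1$. Since $L$ is nilpotent and non-abelian, the last nonzero term $L^c$ of its lower central series lies in $Z(L)\cap L^2$; pick a one-dimensional subspace $K \subseteq L^c$, which is then a central ideal of $L$ contained in $L^2$. Set $H = L/K$. Then $H$ is $(n-1)$-dimensional nilpotent with $H^2 = L^2/K$ of dimension $n-3 = (n-1)-2$, so $H$ falls under the inductive hypothesis; in particular $\dim H^{(ab)} = 2$.

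Now apply Lemma \ref{d1} to the pair $(L,K)$. Since $K \subseteq L^2$ we have $\dim(L^2 \cap K) = 1$; since $K$ is one-dimensional abelian, $\mathcal{M}(K) = 0$ and $K^{(ab)} = K$; and $\dim\big(H^{(ab)} \otimes_{mod} K^{(ab)}\big) = 2 \cdot 1 = 2$. Lemma \ref{d1} then gives $\dim \mathcal{M}(L) + 1 \leq \dim \mathcal{M}(H) + 2$, i.e. $\dim \mathcal{M}(L) \leq \dim \mathcal{M}(H) + 1$. By the inductive hypothesis $\dim \mathcal{M}(H) \leq \dim H^2 = n-3$, hence $\dim \mathcal{M}(L) \leq n-2 = \dim L^2$, completing the induction.

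The only delicate point is the base case: one must know both the classification of $4$-dimensional nilpotent Lie algebras with $\dim L^2 = 2$ and the precise dimension of the Schur multiplier of the resulting filiform algebra. Everything in the inductive step is a bookkeeping application of Lemma \ref{d1}, the key being to choose $K$ inside $L^2$ (using nilpotency) so that the quotient $H$ stays in the same family; one also checks the edge of the induction, namely that for $n \geq 5$ we have $\dim H \geq 4$ and $\dim H^2 = n-3 \geq 2 > 0$, so $H$ is genuinely non-abelian and covered by the hypothesis.
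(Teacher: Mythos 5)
Your argument is correct, but note that the paper does not prove Lemma \ref{35} at all: it is quoted without proof from \cite[Theorem 3.1]{ni1}, so there is no internal proof to compare against, and your proposal in effect supplies an independent proof. Your route is a clean induction on $n$ driven by Lemma \ref{d1}: the base case $n=4$ is right, since a $4$-dimensional nilpotent Lie algebra with $\dim L^2=2$ is two-generated (the Frattini subalgebra of a nilpotent Lie algebra is $L^2$), forcing $\dim L^2/L^3=1$, $\dim L^3=1$, $L^4=0$, and after a change of basis the filiform algebra $[x_1,x_2]=x_3$, $[x_1,x_3]=x_4$, whose multiplier is indeed $2$-dimensional (this last value you assert rather than verify; it is standard, e.g.\ by the Hardy--Stitzinger computation used elsewhere in the paper, and is the only external input). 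The inductive step is sound: $L^c\subseteq Z(L)\cap L^2$ gives a one-dimensional central ideal $K\subseteq L^2$, so $\dim(L^2\cap K)=1$, $\mathcal{M}(K)=0$ by Lemma \ref{2}$(i)$, $\dim H^{(ab)}=2$ for $H=L/K$, and Lemma \ref{d1} yields $\dim\mathcal{M}(L)\leq\dim\mathcal{M}(H)+1\leq (n-3)+1=\dim L^2$, with $H$ legitimately covered by the hypothesis since $\dim H=n-1\geq 4$ and $\dim H^2=(n-1)-2$. This is a self-contained and arguably more elementary derivation than simply citing \cite{ni1}; what the citation buys the paper is brevity, while your induction makes transparent why the bound propagates from the single $4$-dimensional algebra where it is attained.
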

The following theorem improves the earlier bound on the dimension of Schur multiplier.
\begin{thm}\cite[Theorem 3.1]{ni}\label{5} Let $L$ be an $n$-dimensional non-abelian nilpotent Lie algebra with the derived subalgebra of dimension $m.$ Then \[\dim \mathcal{M}(L)\leq \dfrac{1}{2}(n+m-2)(n-m-1)+1.\]
In particular, when $m = 1$ the bound is attained if and only if $L=H(1)\oplus A(n-3)$.
\end{thm}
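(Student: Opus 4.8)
The plan is to prove the inequality by induction on $m=\dim L^2$, using Lemma~\ref{d1} as the inductive engine, and then to settle the equality case $m=1$ by an explicit structural analysis together with Lemmas~\ref{1kg} and \ref{2}.

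\textbf{Base case $m=1$.} A nilpotent Lie algebra $L$ with $\dim L^2=1$ automatically has $L^2\subseteq Z(L)$, since nilpotency forces $[L,L^2]\subsetneq L^2$ and hence $[L,L^2]=0$. The bracket therefore induces an alternating bilinear form on the $(n-1)$-dimensional space $L^{(ab)}$ with one-dimensional target $L^2$; splitting off the radical of this form yields the familiar decomposition $L\cong H(k)\oplus A(n-2k-1)$ for some $k\geq1$ (the generators lying in the radical are central, so they contribute an abelian direct summand). Applying Lemma~\ref{1kg} and Lemma~\ref{2}, I would then compute
\[
\dim\mathcal{M}(L)=\dim\mathcal{M}(H(k))+\tbinom{n-2k-1}{2}+2k(n-2k-1),
\]
and a short arithmetic check, splitting into $k=1$ (where $\dim\mathcal{M}(H(1))=2$) and $k\geq2$ (where $\dim\mathcal{M}(H(k))=2k^2-k-1$), shows that the right-hand side equals $\tfrac12(n-1)(n-2)+1$ exactly when $k=1$, and equals $\tfrac12(n-1)(n-2)-1$ whenever $k\geq2$. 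This simultaneously establishes the bound for $m=1$ and the asserted characterization of equality.

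\textbf{Inductive step.} Assume $m\geq2$ and choose a one-dimensional ideal $K$ with $K\subseteq L^2\cap Z(L)$; the last nonzero term of the lower central series supplies such a $K$. Put $H=L/K$. Then $\dim H=n-1$, $H$ is nilpotent and non-abelian with $\dim H^2=m-1$, and $H^{(ab)}\cong L^{(ab)}$ has dimension $n-m$. Since $\mathcal{M}(K)=0$ and $L^2\cap K=K$, Lemma~\ref{d1} yields
\[
\dim\mathcal{M}(L)\leq\dim\mathcal{M}(H)+(n-m)-1.
\]
By the induction hypothesis applied to $H$ we have $\dim\mathcal{M}(H)\leq\tfrac12(n+m-4)(n-m-1)+1$, and substituting this in gives exactly $\tfrac12(n+m-2)(n-m-1)+1$, which closes the induction.

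\textbf{Main obstacle.} The delicate point is the base case rather than the induction: one must verify carefully that $\dim L^2=1$ genuinely forces the shape $H(k)\oplus A(n-2k-1)$ (so that Lemma~\ref{2} applies verbatim), and then carry out the two arithmetic comparisons that single out $k=1$. The inductive step, by contrast, is essentially forced and in fact tight at every stage, so Lemma~\ref{d1} loses nothing here; this tightness is precisely why the equality analysis for $m\geq2$ is substantially harder and is not attempted in this statement (it is the subject of the remainder of the paper).
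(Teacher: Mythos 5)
Your proposal is correct and follows essentially the same route as the proof the paper cites from [ni] (and echoes in Proposition \ref{d}): the case $m=1$ via the decomposition $L\cong H(k)\oplus A(n-2k-1)$ together with Lemmas \ref{1kg} and \ref{2}, and the case $m\geq 2$ by factoring out a one-dimensional central ideal inside $L^2$ and applying Lemma \ref{d1} inductively. No gaps beyond standard details; the arithmetic in both the base case and the inductive step checks out.
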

From \cite{ellis} $L\wedge L$ and $L\otimes L$ denote the exterior square and the tensor square of a Lie algebra $L$, respectively.
The authors assume that the reader is familiar with these concepts.
\begin{prop}\cite[Proposition 1.1]{ellis}\label{lk}
Let $ L $ be a Lie algebra such that $I  $ and $ K $ are ideals in $ L$ and $ K\subseteq I.$ Then
 the sequence $K\wedge L\rightarrow I\wedge L\rightarrow L/K\wedge I/K\rightarrow 0$ is exact.
\end{prop}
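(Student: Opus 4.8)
The plan is to read off the exactness at both non-trivial positions directly from the generators-and-relations description of the non-abelian exterior product of Lie algebras. Denote by $\mu\colon K\wedge L\to I\wedge L$ the homomorphism induced by the inclusion $K\hookrightarrow I$ (legitimate since $K\subseteq I$), and by $\nu\colon I\wedge L\to L/K\wedge I/K$ the homomorphism induced by the canonical projections $L\to L/K$, $I\to I/K$ together with the natural antisymmetry isomorphism $I\wedge L\cong L\wedge I$; explicitly $\nu(i\wedge\ell)=-(\bar\ell\wedge\bar\imath)$, with bars denoting images modulo $K$. Both $\mu$ and $\nu$ are defined precisely because $K$ and $I$ are ideals of $L$, so that the bracket action of $L$ on $K$, on $I$, and on the quotients $L/K$, $I/K$ is available — and it is these actions that the presentations of the respective exterior products refer to. Exactness at $L/K\wedge I/K$ is immediate, since $L/K\wedge I/K$ is generated by the elements $\bar\ell\wedge\bar\imath=\nu(-(i\wedge\ell))$. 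And $\nu\mu=0$, since $\mu$ carries a generator $k\wedge\ell$ of $K\wedge L$ to $k\wedge\ell\in I\wedge L$, whence $\nu\mu(k\wedge\ell)=-(\bar\ell\wedge\bar k)=0$ as $\bar k=0$ in $I/K$. This gives the easy inclusion $\operatorname{im}\mu\subseteq\ker\nu$.

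The real work is the reverse inclusion $\ker\nu\subseteq\operatorname{im}\mu$, and I would obtain it by constructing an explicit two-sided inverse of the surjection $\bar\nu\colon(I\wedge L)/\operatorname{im}\mu\to L/K\wedge I/K$ induced by $\nu$. Define $\phi\colon L/K\wedge I/K\to(I\wedge L)/\operatorname{im}\mu$ on generators by $\phi(\bar\ell\wedge\bar\imath)=-(i\wedge\ell)+\operatorname{im}\mu$, where $i\in I$ and $\ell\in L$ are arbitrary preimages. The substance of the proof is that $\phi$ is well defined. Independence of the choice of preimages is quick: changing $\ell$ to $\ell+k$ with $k\in K$ alters $-(i\wedge\ell)$ by $-(i\wedge k)=k\wedge i\in\operatorname{im}\mu$ (the first equality being the exterior relation, valid since $i,k$ both lie in $I$), and changing $i$ to $i+k$ alters it by $-(k\wedge\ell)\in\operatorname{im}\mu$. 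Then one checks that $\phi$ honours every defining relation of $L/K\wedge I/K$: the bilinearity and scalar relations are transparent; the exterior relation poses no problem because $\phi(\bar\imath\wedge\bar\imath)=-(i\wedge i)+\operatorname{im}\mu=0$, as $i\wedge i=0$ in $I\wedge L$; and the two bracket relations hold because, after applying $\phi$, both sides become an instance of a bracket relation already valid in $I\wedge L$. For example $[\bar\ell_1,\bar\ell_2]\wedge\bar\imath=\bar\ell_1\wedge\overline{[\ell_2,i]}-\bar\ell_2\wedge\overline{[\ell_1,i]}$ is sent by $\phi$ to $-(i\wedge[\ell_1,\ell_2])+\operatorname{im}\mu$ and to $\bigl(-([\ell_2,i]\wedge\ell_1)+([\ell_1,i]\wedge\ell_2)\bigr)+\operatorname{im}\mu$, and these coincide because $i\wedge[\ell_1,\ell_2]=[\ell_2,i]\wedge\ell_1-[\ell_1,i]\wedge\ell_2$ holds in $I\wedge L$; here the ideal hypotheses are exactly what guarantee $[\ell_j,i]\in I$, so that every symbol above is a bona fide element of $I\wedge L$.

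With $\phi$ in hand, a one-line verification on generators shows $\bar\nu\phi=\operatorname{id}$ and $\phi\bar\nu=\operatorname{id}$, so $\bar\nu$ is an isomorphism; that is precisely the statement $\ker\nu=\operatorname{im}\mu$, and together with the first paragraph this is the asserted exactness. I expect the only genuine obstacle to be the discipline of the middle paragraph rather than any deep point: one must be careful throughout about which exterior product lives on which ordered pair of algebras — so as never to conflate $I\wedge L$ with $L\wedge I$ or with $L/K\wedge I/K$ — and one must patiently confirm that \emph{all} of Ellis's defining relations for the target, in particular both bracket relations, are respected by $\phi$; it is exactly in checking those relations that the hypothesis "$K$ and $I$ are ideals", and not merely subalgebras, gets used. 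The argument runs parallel to the proof of the corresponding exact sequence for the non-abelian exterior product of groups.
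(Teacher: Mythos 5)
The paper itself offers no proof of this statement: it is quoted directly from Ellis \cite[Proposition 1.1]{ellis}, so there is no in-paper argument to compare yours against. Your proof is the standard presentation-theoretic one for the non-abelian exterior product (the Lie-algebra analogue of the Brown--Loday argument for groups, and essentially Ellis's own route): the easy half $\operatorname{im}\mu\subseteq\ker\nu$ together with surjectivity of $\nu$, and then an explicit inverse $\phi$ of the induced map $\bar\nu$, whose well-definedness is checked against the defining relations. Your verifications are correct as far as they go, and the antisymmetry identity $a\wedge b=-(b\wedge a)$ for $a,b\in I$ (valid in $I\wedge L$ because the relevant intersection is all of $I$) is indeed the key device, both for independence of the lifts and for the bracket relations. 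Two small points should be added to make the argument airtight. First, before forming $(I\wedge L)/\operatorname{im}\mu$ you need to observe that $\operatorname{im}\mu$ is an ideal of $I\wedge L$; this follows from the defining relation for brackets of generators, which (with Ellis's action conventions) gives $[\,i\wedge\ell,\;k\wedge\ell'\,]=-\bigl([\ell,i]\wedge[k,\ell']\bigr)$ with $[k,\ell']\in K$ and $[\ell,i]\in I$, together with the same antisymmetry trick. Second, your enumeration of the relations that $\phi$ must honour covers the linearity, exterior and the two ``bracket in one slot'' relations but omits precisely this last family, the relation expressing the bracket of two generators as a new generator; since $\phi$ is to be a homomorphism of Lie algebras into a quotient Lie algebra, this relation must be checked too, and it again reduces modulo $\operatorname{im}\mu$ to $a\wedge b=-(b\wedge a)$ for $a,b\in I$. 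With these two routine verifications inserted, your proof is complete and coincides in substance with the argument in Ellis's paper.
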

\begin{lem}\cite[Theorem 35 $(iii)$]{ellis}\label{j1}
Let $ L $ be a Lie algebra. Then
$0\rightarrow \mathcal{M}(L)\rightarrow L\wedge L \xrightarrow{\kappa'} L^2\rightarrow 0$ is exact, in where
$\kappa': L\wedge L \rightarrow L^2$ is given by $l\wedge l_1\mapsto [l,l_1].$

\end{lem}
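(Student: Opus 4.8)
The plan is to deduce the exact sequence from the Hopf-type formula for the Schur multiplier recalled in the introduction, $\mathcal{M}(L)\cong(R\cap F^2)/[R,F]$ with $L\cong F/R$ and $F$ free, together with the standard presentation of the exterior square. First I would invoke the identification $L\wedge L\cong F^2/[R,F]$ coming from the construction of $\wedge$ in \cite{ellis}: the generator $l\wedge l_1$ corresponds to the coset $[\bar l,\bar l_1]+[R,F]$, where $\bar l,\bar l_1\in F$ are arbitrary preimages of $l,l_1\in L$. Establishing this requires checking that the assignment is well defined and that the defining relations of $L\wedge L$ (bilinearity, the alternating law $l\wedge l=0$, and the Jacobi-type relations) correspond precisely to the relations holding among these cosets in $F^2/[R,F]$.

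Granting the presentation, I would observe that under it $\kappa'$ becomes the canonical epimorphism $\pi\colon F^2/[R,F]\to F^2/(F^2\cap R)$, since $\pi$ maps $[\bar l,\bar l_1]+[R,F]$ to $[\bar l,\bar l_1]+(F^2\cap R)$ and $F^2/(F^2\cap R)\cong(F^2+R)/R=[L,L]=L^2$ identifies this with $[l,l_1]$. Then $\pi$ is surjective because $L^2$ is spanned by the brackets $[l,l_1]$; its kernel is $(F^2\cap R)/[R,F]$ (note $[R,F]\subseteq F^2\cap R$), which is exactly $\mathcal{M}(L)$; and the map $\mathcal{M}(L)\to L\wedge L$ corresponds under the identification to the inclusion $(F^2\cap R)/[R,F]\hookrightarrow F^2/[R,F]$, hence is injective. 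These three facts give exactness of $0\to\mathcal{M}(L)\to L\wedge L\xrightarrow{\kappa'}L^2\to0$ at each term.

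The only nonformal ingredient is the presentation $L\wedge L\cong F^2/[R,F]$, and I expect this to be the main obstacle: one must verify carefully that translating between the abstract generators-and-relations definition of $L\wedge L$ and the concrete quotient $F^2/[R,F]$ neither creates nor destroys relations. This is the Lie-algebra analogue of the corresponding group-theoretic computation, with group commutators replaced by Lie brackets and the Jacobi identity taking over the role of the Hall--Witt identity, and it is precisely what is carried out in \cite{ellis}. An alternative, essentially equivalent route is to note that the commutator map $L\otimes L\to L^2$ kills every $l\otimes l$ and so factors through $L\wedge L$ as $\kappa'$, and then identify $\ker\kappa'$ with $\mathcal{M}(L)$ — but this identification again rests on the same Hopf-formula computation, so the difficulty is not avoided. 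Everything after that point is diagram chasing.
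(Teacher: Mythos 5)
Your argument is correct: granting the identification $L\wedge L\cong F^2/[R,F]$ (which is exactly Lemma \ref{k12} of the paper, so you need not reprove it), the map $\kappa'$ becomes the canonical projection $F^2/[R,F]\to F^2/(R\cap F^2)\cong L^2$, whose kernel $(R\cap F^2)/[R,F]$ is $\mathcal{M}(L)$ by the Hopf formula, giving exactness at all three terms. The paper states this lemma without proof, citing Ellis, and your derivation is essentially the standard argument underlying that citation, so there is nothing further to add.
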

The following lemmas are useful in our main results.
\begin{lem}\cite[Lemma 2.14]{c}\label{943}
Let $I$ be a central ideal of  $L.$ Then
\[ I\wedge L\cong \big(I\otimes_{mod} L/L^2\big) / \langle  x\otimes (x+L^2)| x\in I\rangle.\]
Moreover, if $I\subseteq L^2,$ then $ I\wedge L\cong I\otimes_{mod} L/L^2.$
\end{lem}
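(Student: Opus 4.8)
The plan is to unwind the definition of the nonabelian exterior product and to exploit the fact that a central ideal both acts trivially and is acted upon trivially. Recall from \cite{ellis} that for an ideal $I$ of $L$ the exterior product $I\wedge L$ is the quotient of the nonabelian tensor product $I\otimes L$ by the submodule generated by the diagonal elements $x\otimes x$ with $x\in I$, so it is enough to describe $I\otimes L$ itself. In $I\otimes L$ the algebras $I$ and $L$ act on one another via the bracket of $L$; but the hypothesis $I\subseteq Z(L)$ forces $[I,L]=0$, so both of these actions are trivial and, in particular, $[I,I]\subseteq[I,L]=0$, i.e. $I$ is abelian.

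Feeding this triviality into the defining relations of $I\otimes L$, the relation that expresses a bracket $[i,i']\otimes l$ through the action of $I$ on $L$ degenerates to $0=0$ (its left side vanishes because $I$ is abelian, its right side because the action is trivial), while the relation that expresses $i\otimes[l,l']$ through the action of $L$ on $I$ collapses to $i\otimes[l,l']=0$. Hence in $I\otimes L$ the symbol $i\otimes l$ depends only on the coset $l+L^2$, the only surviving relations are bilinearity over the ground field, and the evident assignments $i\otimes(l+L^2)\leftrightarrow i\otimes l$ define mutually inverse homomorphisms. This identifies $I\otimes L$ with $I\otimes_{mod}L/L^2$, and under this identification the diagonal element $x\otimes x$ corresponds to $x\otimes(x+L^2)$.

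Combining the two steps gives $I\wedge L\cong(I\otimes_{mod}L/L^2)/\langle\,x\otimes(x+L^2)\mid x\in I\,\rangle$, which is the first assertion. For the ``moreover'' clause, if in addition $I\subseteq L^2$ then $x+L^2=0$ in $L/L^2$ for every $x\in I$, so the submodule we divide out is zero and $I\wedge L\cong I\otimes_{mod}L/L^2$.

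The one point that needs genuine care --- the main obstacle --- is checking that the natural surjection $I\otimes_{mod}L/L^2\twoheadrightarrow I\otimes L$ is in fact injective, that is, that once the actions have been trivialized no relations remain beyond bilinearity and $i\otimes[l,l']=0$. This forces one to return to the precise presentation of $\otimes$ in \cite{ellis} and verify it relation by relation, rather than paraphrasing; everything else in the argument is formal.
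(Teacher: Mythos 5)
The paper gives no proof of this lemma at all --- it is one of the preliminaries quoted without proof from \cite{c}, so there is no in-paper argument to measure you against; judged on its own, your outline is the standard proof and is essentially correct. For a central ideal $I$ both mutual actions in Ellis's presentation of $I\otimes L$ are by brackets with $I$, hence trivial, so the relation governing $[i,i']\otimes l$ becomes vacuous and the relation governing $i\otimes[l,l']$ collapses to $i\otimes[l,l']=0$, exactly as you say. Two points deserve to be made explicit rather than deferred. First, you should also record that Ellis's bracket relation $[(i\otimes l),(i'\otimes l')]=-[l,i]\otimes [i',l']$ trivializes (again since $[l,i]=0$), so $I\otimes L$ is an \emph{abelian} Lie algebra; without this the comparison with the vector-space tensor product $I\otimes_{mod}L/L^{2}$ does not even make sense as an isomorphism of Lie algebras. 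Second, the injectivity you flag as ``the main obstacle'' is discharged most cleanly not by re-reading the presentation relation by relation but by the universal property of the non-abelian tensor product: the bilinear map $I\times L\rightarrow I\otimes_{mod}L/L^{2}$, $(i,l)\mapsto i\otimes (l+L^{2})$, is a crossed (Lie) pairing into an abelian algebra precisely because $[I,L]=0$ and because $L^{2}$ is killed in the second slot, so it induces a homomorphism $I\otimes L\rightarrow I\otimes_{mod}L/L^{2}$ that is visibly inverse to the natural surjection. With that in place, the diagonal elements $x\otimes x$ ($x\in I$, and here $I\cap L=I$) correspond to $x\otimes (x+L^{2})$, giving the stated description of $I\wedge L$, and the ``moreover'' clause is immediate since $x+L^{2}=0$ when $I\subseteq L^{2}$. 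So: correct approach, with the two routine verifications above to be written out to make it a complete proof.
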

The next lemma illustrates the Lie algebra $L\wedge L$ is isomorphic to a factor of the free Lie algebra $F^2$.
\begin{lem}\cite[Theorem 2.10]{ni'}\label{k12}
Let $0\rightarrow R\rightarrow F \xrightarrow{\pi} L\rightarrow 0$ be a free presentation of a Lie algebra $ L $.
Then \begin{align*}\delta: L\wedge L&\rightarrow F^2/[R,F]\\
x\wedge y&\mapsto [\tilde{x},\tilde{y}]+[R,F] \end{align*} is an isomorphism, in where $ \pi(\tilde{x}+R)=x $ and $ \pi(\tilde{y}+R)=y.$
\end{lem}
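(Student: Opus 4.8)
The plan is to realize $\delta$ as a composite of two natural isomorphisms with the quotient map produced by the exterior-square functor, so that well-definedness and bijectivity come out simultaneously.

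First I would use that $F$ is free to conclude $\mathcal{M}(F)=0$, which is the Hopf-type formula recalled in the introduction applied to the presentation $F\cong F/0$. Lemma~\ref{j1} applied to $F$ then gives that
\[
\kappa'_F\colon F\wedge F\longrightarrow F^2,\qquad f\wedge g\longmapsto [f,g],
\]
is an isomorphism of Lie algebras.

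Next I would apply Proposition~\ref{lk} inside the ambient algebra $F$, taking, in the notation of that proposition, the big algebra to be $F$, the ideal $I$ to be $F$, and the ideal $K$ to be $R$. This produces the exact sequence
\[
R\wedge F\ \xrightarrow{\ \alpha\ }\ F\wedge F\ \xrightarrow{\ \beta\ }\ L\wedge L\ \longrightarrow\ 0 ,
\]
with $\alpha$ induced by the inclusion $R\hookrightarrow F$ and $\beta$ induced by $\pi$, so that $\beta$ yields an isomorphism $(F\wedge F)/\mathrm{im}\,\alpha\cong L\wedge L$. It then remains to transport $\mathrm{im}\,\alpha$ across $\kappa'_F$. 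Since $\alpha$ is the induced map, $\kappa'_F\bigl(\alpha(r\wedge f)\bigr)=[r,f]$, so $\kappa'_F(\mathrm{im}\,\alpha)$ is the linear span of the brackets $[r,f]$ with $r\in R$, $f\in F$; using that $R$ is an ideal together with the Jacobi identity in the form $[[r,f],g]=[r,[f,g]]-[[g,r],f]$, one checks this span is already an ideal of $F$, and since it contains every $[r,f]$ it equals $[R,F]$. Hence $\kappa'_F$ descends to an isomorphism $(F\wedge F)/\mathrm{im}\,\alpha\cong F^2/[R,F]$, and composing with $\beta^{-1}$ gives an isomorphism $L\wedge L\cong F^2/[R,F]$.

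Finally I would verify that this composite is exactly $\delta$: if $x=\pi(\tilde x)$ and $y=\pi(\tilde y)$, then $x\wedge y=\beta(\tilde x\wedge\tilde y)$, so it is carried to $\kappa'_F(\tilde x\wedge\tilde y)+[R,F]=[\tilde x,\tilde y]+[R,F]$; in particular the value is independent of the chosen lifts, because two lifts of $x$ differ by an element of $R$ and $[R,\tilde y]\subseteq[R,F]$. The only real obstacle is bookkeeping --- making the maps of Proposition~\ref{lk} explicit enough to identify the composite with $\delta$, and the elementary point that the set of finite sums $\sum_i[r_i,f_i]$ with $r_i\in R$ is already an ideal and thus coincides with $[R,F]$. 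As an alternative, valid at least when $L$ is finite dimensional, one can instead check directly that $\delta$ respects the defining relations of $L\wedge L$ and the bracket $[x\wedge y,x'\wedge y']=[x,y]\wedge[x',y']$, note that $\delta$ is onto since $F^2/[R,F]$ is generated by the classes $[f,g]+[R,F]=\delta(\pi(f)\wedge\pi(g))$, and deduce injectivity from the dimension equality $\dim\bigl(F^2/[R,F]\bigr)=\dim\mathcal{M}(L)+\dim L^2=\dim(L\wedge L)$ coming from Lemma~\ref{j1} and the Hopf formula.
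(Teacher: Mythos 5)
Your argument is correct. Note that the paper itself offers no proof of this lemma --- it is quoted verbatim from \cite[Theorem 2.10]{ni'} as a preliminary --- so there is nothing internal to compare against; what you have done is reassemble a proof from exactly the ingredients the paper does cite, and it holds together. The key points all check out: $\mathcal{M}(F)=0$ for the free algebra $F$ via the Hopf formula applied to the presentation $F\cong F/0$, so Lemma~\ref{j1} makes $\kappa'_F\colon F\wedge F\to F^2$ an isomorphism; Proposition~\ref{lk} with $I=F$, $K=R$ gives $L\wedge L\cong (F\wedge F)/\mathrm{im}\,\alpha$; and your verification that the span of the brackets $[r,f]$ is already an ideal (Jacobi plus $R$ being an ideal) correctly identifies $\kappa'_F(\mathrm{im}\,\alpha)$ with $[R,F]$, even though $\mathrm{im}\,\alpha$ is a priori only generated as a subalgebra by the elements $r\wedge f$. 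The identification of the resulting composite with $\delta$, and the independence of the choice of lifts, are immediate as you say. Your fallback argument (well-definedness by checking relations, surjectivity, then a dimension count from Lemma~\ref{j1} and the Hopf formula) is also sound but weaker, since it needs finite dimensionality and a genuine verification of the defining relations of the exterior square that you only sketch; the first route is the one to keep, and it is in the spirit of the source you would be replacing, which likewise rests on Ellis's exact sequences for the nonabelian exterior product.
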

The next result is extract from the works of  Batten,  Moneyhun and Stitzinger (1996).
\begin{lem}\cite[Lemma 1]{ba1}\label{llll}
Let $L$ be a  Lie algebra such that $ \dim L/Z(L)=n.$ Then  $ \dim L^2\leq \dfrac{1}{2}n(n-1).$
\end{lem}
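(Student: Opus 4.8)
The statement is purely a linear-algebra fact about the bracket, so the plan is to exhibit a surjection from a space of dimension $\tfrac12 n(n-1)$ onto $L^2$. Write $\bar L = L/Z(L)$, which has dimension $n$ by hypothesis. First I would observe that the Lie bracket $[\,\cdot\,,\,\cdot\,]\colon L\times L\to L^2$ kills $Z(L)$ in either slot, since $[Z(L),L]=0$; hence it descends to a well-defined bilinear map $\beta\colon \bar L\times\bar L\to L^2$, $(x+Z(L),\,y+Z(L))\mapsto [x,y]$. Because $[x,x]=0$, the map $\beta$ is alternating, so by the universal property of the exterior square it factors through a linear map $\tilde\beta\colon \Lambda^2(\bar L)\to L^2$ with $\tilde\beta\big((x+Z(L))\wedge(y+Z(L))\big)=[x,y]$.

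Next I would check surjectivity: $L^2$ is spanned by elements of the form $[x,y]$ with $x,y\in L$, and each such element lies in the image of $\tilde\beta$, so $\tilde\beta$ is onto. Consequently
\[
\dim L^2 \;=\; \dim\big(\operatorname{im}\tilde\beta\big)\;\leq\;\dim \Lambda^2(\bar L)\;=\;\binom{n}{2}\;=\;\frac12 n(n-1),
\]
which is the desired bound.

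Concretely, if one prefers to avoid the exterior-square language, the same argument can be phrased by choosing a vector-space complement $U$ of $Z(L)$ in $L$ with basis $x_1,\dots,x_n$; then $L=U\oplus Z(L)$ and, since brackets involving $Z(L)$ vanish, $L^2$ is spanned by the $\binom{n}{2}$ elements $[x_i,x_j]$ with $1\le i<j\le n$, giving $\dim L^2\le \tfrac12 n(n-1)$ directly. There is no real obstacle here; the only point requiring the smallest care is noting that the bracket genuinely factors through $\bar L$ in \emph{both} arguments (not merely one), which is exactly the statement $[Z(L),L]=0$ together with antisymmetry of the bracket.
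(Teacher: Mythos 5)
Your proof is correct. The paper itself states this lemma without proof, citing Batten--Moneyhun--Stitzinger, and your argument is exactly the standard one behind that citation: the bracket kills $Z(L)$ in each slot, so it induces an alternating bilinear map on $L/Z(L)$ whose image spans $L^2$, giving $\dim L^2\leq\binom{n}{2}=\frac{1}{2}n(n-1)$; the concrete basis version with a complement of $Z(L)$ is an equally valid phrasing of the same idea.
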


\section{Main result}
In this section, after examining certain upper bounds for $L\wedge L$ and $\mathcal{M}(L)$, we investigate the numerical inequality on the dimension
$\mathcal{M}(L).$  Then we classify all
nilpotent Lie algebras that attains the upper bound Theorem \ref{5}. They are exactly nilpotent Lie algebras of class two. Moreover, for nilpotent Lie algebras of class at least $3,$ we also improve the bound Theorem \ref{5}.

First, we begin with the following result for a Lie algebra, similar to the result of Blackburn for the group theory  case \cite{black burn}.

Let $0\rightarrow R\rightarrow F \xrightarrow{\pi} L\rightarrow 0$ be a free presentation of a Lie algebra $ L $. Then

\begin{thm}\label{lkk}
Let $ L $ be a finite dimensional nilpotent non-abelian Lie algebra of class two. Then  \[0\rightarrow \ker g \rightarrow L^2\otimes_{mod} L^{(ab)} \xrightarrow{g} \mathcal{M}(L)\rightarrow
\mathcal{M}(L^{(ab)})\rightarrow L^2 \rightarrow 0\]
is exact, in where
\[g: x\otimes (z+L^2)\in L^2\otimes_{mod}L^{(ab)}\mapsto [\tilde{x},\tilde{z}]+[R,F]\in \mathcal{M}(L)=R\cap F^2/[R,F],\]
 $ \pi(\tilde{x}+R)=x $ and $ \pi(\tilde{z}+R)=z.$
Moreover,
$ K=\langle [x,y]\otimes z+L^2 +
[z,x]\otimes y+L^2+[y,z]\otimes x+L^2|x,y,z\in L\rangle \subseteq \ker g.$
\end{thm}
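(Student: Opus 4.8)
The plan is to assemble the exact sequence from two more elementary pieces that are already available, namely Proposition~\ref{lk} (the exterior five-term sequence) and Lemma~\ref{943} (which identifies $I\wedge L$ with a modular tensor product when $I$ is central and contained in $L^2$), together with the identification $L\wedge L\cong F^2/[R,F]$ of Lemma~\ref{k12} and the Hopf-type sequence of Lemma~\ref{j1}. Since $L$ is nilpotent of class two, $L^2$ is a central ideal contained in $L^2$, so Lemma~\ref{943} gives $L^2\wedge L\cong L^2\otimes_{mod}L^{(ab)}$; this is the domain of the map $g$. I would first apply Proposition~\ref{lk} with $I=K=L^2$: the three-term exact sequence $L^2\wedge L\to L^2\wedge L\to (L/L^2)\wedge(L/L^2)\to 0$ is not quite what we want, so instead I would take $I=L$, $K=L^2$, yielding the exact sequence $L^2\wedge L\xrightarrow{\;j\;} L\wedge L\to L^{(ab)}\wedge L^{(ab)}\to 0$.

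Next I would splice this against Lemma~\ref{j1} applied to both $L$ and $L^{(ab)}$. For $L$ we have $0\to\mathcal{M}(L)\to L\wedge L\xrightarrow{\kappa'}L^2\to 0$, and since $L^{(ab)}$ is abelian, $(L^{(ab)})^2=0$, so $L^{(ab)}\wedge L^{(ab)}\cong\mathcal{M}(L^{(ab)})$. The composite $L^2\wedge L\xrightarrow{j}L\wedge L\xrightarrow{\kappa'}L^2$ sends $x\wedge \ell$ to $[x,\ell]$, which vanishes because $x\in L^2\subseteq Z(L)$; hence $j$ factors through $\ker\kappa'=\mathcal{M}(L)$, giving a map $L^2\wedge L\to\mathcal{M}(L)$, which under the identification $L^2\wedge L\cong L^2\otimes_{mod}L^{(ab)}$ is exactly the map $g$ (one checks the formula for $g$ via the isomorphism $\delta$ of Lemma~\ref{k12}: $x\wedge\tilde z\mapsto[\tilde x,\tilde z]+[R,F]$). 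A diagram chase then stitches everything together: $\operatorname{im}g=\ker\big(\mathcal{M}(L)\to\mathcal{M}(L^{(ab)})\big)$ because $\operatorname{im}j=\ker\big(L\wedge L\to L^{(ab)}\wedge L^{(ab)}\big)$, and the cokernel of $\mathcal{M}(L)\to\mathcal{M}(L^{(ab)})$ maps onto $L^2$ with trivial kernel — the last map $\mathcal{M}(L^{(ab)})\to L^2$ being induced from $\kappa'$ on $L\wedge L$ composed with the surjection $L\wedge L\twoheadrightarrow L^{(ab)}\wedge L^{(ab)}\cong\mathcal{M}(L^{(ab)})$, or more cleanly obtained from Lemma~\ref{d1} / the five-term homology sequence for $L^2\trianglelefteq L$. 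Finally one observes $\ker g$ is what makes the left end exact, giving the displayed six-term sequence.

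For the last assertion, that the Jacobi-type submodule $K$ lies in $\ker g$: I would compute $g$ on a generator $[x,y]\otimes(z+L^2)+[z,x]\otimes(y+L^2)+[y,z]\otimes(x+L^2)$. By the definition of $g$ this is $[\widetilde{[x,y]},\tilde z]+[\widetilde{[z,x]},\tilde y]+[\widetilde{[y,z]},\tilde x]+[R,F]$. Lifting and using that modulo $[R,F]$ the bracket behaves like the Lie bracket in $F^2$ (and that $\widetilde{[x,y]}\equiv[\tilde x,\tilde y]$ modulo $R$, with $[R,F]$ absorbing the discrepancy since $L$ has class two so $[x,y]\in L^2\subseteq Z(L)$ forces the relevant correction terms into $[R,F]$), this sum becomes $\big[[\tilde x,\tilde y],\tilde z\big]+\big[[\tilde z,\tilde x],\tilde y\big]+\big[[\tilde y,\tilde z],\tilde x\big]+[R,F]$, which is zero by the Jacobi identity in the free Lie algebra $F$. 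Hence $K\subseteq\ker g$.

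The main obstacle I anticipate is the bookkeeping in the diagram chase connecting the three short/partial exact sequences — in particular verifying that the connecting map $\mathcal{M}(L^{(ab)})\to L^2$ is well-defined and surjective with the precise kernel claimed, and checking carefully that the map $g$ as defined by the explicit formula genuinely agrees with the map $L^2\wedge L\to\mathcal{M}(L)$ coming from Proposition~\ref{lk} under the chain of identifications in Lemmas~\ref{943}, \ref{k12}. These are all compatibility-of-canonical-maps verifications rather than deep points, but they require care to state the identifications consistently. A cleaner alternative worth considering is to derive the whole sequence directly from Lemma~\ref{d1} specialized to $K=L^2$ (so $H=L^{(ab)}$), combined with the fact that for class-two $L$ the inequality there is governed exactly by $\dim\ker g$; this reroutes the argument through dimension counts but still needs the explicit description of $g$.
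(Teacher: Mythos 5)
Your argument is correct, but it takes a genuinely different route from the paper: the paper disposes of the six-term sequence in one line by quoting Araskhan's result (\cite[Lemma 1.2]{aras} with $c=1$), and only does real work on the final claim $K\subseteq\ker g$, which it proves by exactly the Jacobi computation you give. You instead rebuild the Ganea-type sequence internally from the paper's own toolkit: Lemma \ref{943} (class two makes $L^2$ central, so $L^2\wedge L\cong L^2\otimes_{mod}L^{(ab)}$), Proposition \ref{lk} with $K=L^2\subseteq I=L$, and Lemma \ref{j1} for $L$ and for $L^{(ab)}$ (where $\mathcal{M}(L^{(ab)})\cong L^{(ab)}\wedge L^{(ab)}$); the chase then works: since $\kappa'\circ j=0$ by centrality of $L^2$, one gets $\operatorname{im}j\subseteq\mathcal{M}(L)$, exactness at $\mathcal{M}(L)$ follows from $\ker\bigl(L\wedge L\to L^{(ab)}\wedge L^{(ab)}\bigr)=\operatorname{im}j$, and $\kappa'$ descends to a surjection $\mathcal{M}(L^{(ab)})\to L^2$ with kernel the image of $\mathcal{M}(L)$, and Lemma \ref{k12} identifies the resulting map with the explicit formula for $g$. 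What your approach buys is self-containedness and an explicit description of all the maps in terms of the exterior square already used throughout the paper; what the paper's citation buys is brevity and a statement valid for all $c$-nilpotent multipliers, not just $c=1$. Two small points of care: in your verification that $K\subseteq\ker g$, the reason the choice of lift $\widetilde{[x,y]}$ is harmless is simply that two lifts differ by an element of $R$, so the discrepancy lands in $[R,F]$ — centrality of $L^2$ is not what is needed there (it is needed earlier, for Lemma \ref{943} and for $\kappa'\circ j=0$); and your suggested shortcut via Lemma \ref{d1} would only give dimension inequalities, not the exact sequence with the specified map $g$, so it should not replace the diagram chase.
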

\begin{proof}
By using  \cite[Lemma 1.2]{aras} for $c=1$, we have the following exact sequence
\[0\rightarrow \ker g\rightarrow L^2\otimes_{mod} L^{(ab)} \xrightarrow{g} \mathcal{M}(L)\rightarrow
\mathcal{M}(L^{(ab)})\rightarrow L^2 \rightarrow 0\]
in where
\[g:x\otimes (z+L^2)\in L^2\otimes_{mod} L^{(ab)}\rightarrow [\tilde{x},\tilde{z}]+[R,F]\in\mathcal{M}(L)=R\cap F^2/[R,F],\]
$ \pi(\tilde{x}+R)=x $ and $ \pi(\tilde{z}+R)=z.$
Putting $ K=\langle [x,y]\otimes z+L^2 +
[z,x]\otimes y+L^2+[y,z]\otimes x+L^2|x,y,z\in L\rangle.$
By using the Jacobi identities, \begin{align*}g ([x,y]\otimes z +
[z,x]\otimes y+[y,z]\otimes x)=
[[\tilde{x},\tilde{y}], \tilde{z}] +
[[\tilde{z},\tilde{x}], \tilde{y}]+[[\tilde{y},\tilde{z}],\tilde{x}]+[R,F]=0.
 \end{align*}
 Thus $ K\subseteq \ker g.$
\end{proof}
The following two theorems are similar to the results of Ellis in \cite{el} and Hauti in \cite{H} for the case of group theory.
\begin{thm}\label{151}
Let $L$ be a Lie algebra. Then
\begin{itemize}
\item[$(i)$]$L^i/L^{i+1}\wedge L/L^{i+1}\cong L^i/L^{i+1}\otimes_{mod} L/L^2$  for all $i\geq 2.$

\item[$(ii)$] The natural sequence $L^{i+1}\wedge L ~~\xrightarrow{\alpha_{i+1}} L^{i}\wedge L\xrightarrow{\eta_i} L^{i}/L^{i+1} \otimes_{mod} L/L^{2}\rightarrow 0$ is exact  for all $ i\geq 2.$
\end{itemize}
 \end{thm}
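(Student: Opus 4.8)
The plan is to prove part $(ii)$ first and then derive part $(i)$ as the special case obtained by analyzing when the map $\alpha_{i+1}$ has image filling out the kernel of $\eta_i$; in fact the cleanest route is to establish both simultaneously. I would start from Proposition \ref{lk} applied with $I = L^i$ and $K = L^{i+1}$, which are both ideals of $L$ with $K \subseteq I$, yielding the exact sequence
\[
L^{i+1}\wedge L \xrightarrow{\alpha_{i+1}} L^i \wedge L \xrightarrow{\beta} L^i/L^{i+1} \wedge L/L^{i+1} \rightarrow 0 .
\]
This already gives exactness on the right; the content of $(ii)$ is then to identify the target $L^i/L^{i+1}\wedge L/L^{i+1}$ with $L^i/L^{i+1}\otimes_{mod} L/L^2$, which is precisely the statement of $(i)$. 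So the whole theorem reduces to proving $(i)$.

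To prove $(i)$, set $\bar{L} = L/L^{i+1}$ and note that $\bar{I} := L^i/L^{i+1}$ is a central ideal of $\bar{L}$ for $i \geq 2$ (since $[L^i, L] = L^{i+1}$), and moreover $\bar{I} \subseteq \bar{L}^2 = L^2/L^{i+1}$ because $i \geq 2$. I would then invoke Lemma \ref{943} with the Lie algebra $\bar{L}$ and the central ideal $\bar{I}$: since $\bar{I} \subseteq \bar{L}^2$, the lemma gives
\[
\bar{I} \wedge \bar{L} \;\cong\; \bar{I} \otimes_{mod} \bar{L}/\bar{L}^2 .
\]
Finally I would rewrite the right-hand side: $\bar{I} = L^i/L^{i+1}$ and $\bar{L}/\bar{L}^2 = (L/L^{i+1})\big/(L^2/L^{i+1}) \cong L/L^2$ by the third isomorphism theorem, and $\bar{I}\wedge\bar{L} = L^i/L^{i+1} \wedge L/L^{i+1}$. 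This yields exactly $L^i/L^{i+1} \wedge L/L^{i+1} \cong L^i/L^{i+1} \otimes_{mod} L/L^2$, proving $(i)$. Substituting this identification back into the Proposition \ref{lk} sequence above gives $(ii)$, with $\eta_i$ defined as the composite of $\beta$ with this isomorphism.

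The main obstacle, such as it is, lies in checking that the isomorphism of Lemma \ref{943} is natural enough that the composite $\eta_i = (\text{iso}) \circ \beta$ really is the asserted \emph{natural} map $L^i\wedge L \to L^i/L^{i+1}\otimes_{mod} L/L^2$ — i.e. that it sends $x \wedge l \mapsto (x + L^{i+1}) \otimes (l + L^2)$ for $x \in L^i$, $l \in L$. I would verify this by tracing a wedge generator $x\wedge l$ through $\beta$ (which maps it to $(x+L^{i+1})\wedge(l+L^{i+1})$) and then through the explicit description of the isomorphism in Lemma \ref{943}, which on the submodule $\bar I \subseteq \bar L^2$ is induced by $\bar x \wedge \bar l \mapsto \bar x \otimes \bar l$. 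The remaining points — that $L^i/L^{i+1}$ is central in $L/L^{i+1}$ and contained in its derived subalgebra for $i\ge 2$, and the bookkeeping of the quotients — are routine and require no serious computation.
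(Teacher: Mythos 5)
Your proposal is correct and follows essentially the same route as the paper: part $(i)$ via Lemma \ref{943} applied to the central ideal $L^i/L^{i+1}\subseteq Z(L/L^{i+1})\cap L^2/L^{i+1}$ of $L/L^{i+1}$, and part $(ii)$ by combining Proposition \ref{lk} (with $I=L^i$, $K=L^{i+1}$) with the identification from $(i)$. You merely spell out details (the third isomorphism theorem step and the naturality check for $\eta_i$) that the paper leaves implicit.
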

\begin{proof}
\begin{itemize}
\item[$(i)$]
Since $ L^i/L^{i+1}\subseteq Z(L/L^{i+1})\cap L^2/L^{i+1},$
Lemma \ref{943} implies $ L^i/L^{i+1}\wedge L/L^{i+1}\cong L^i/L^{i+1}\otimes_{mod} L/L^2$ for all $ i\geq 2.$

\item[$(ii)$]The result follows from Proposition \ref{lk} and part $(i).$
\end{itemize}
\end{proof}

\begin{prop}\label{r}
Let $L$ be a Lie algebra. Then
\begin{itemize}
\item[$(i)$]
the map $\gamma_L:  L^{(ab)}\otimes_{mod}  L^{(ab)}\otimes_{mod} L^{(ab)}  \rightarrow L^2/L^3\otimes_{mod} L/L^2$ given by
\[(x+L^2)\otimes (y+L^2) \otimes(z+L^2)\mapsto ([x,y]+L^3\otimes z+L^2 )+
([z,x]+L^3\otimes y+L^2)+([y,z]+L^3\otimes x+L^2)
\]
is a Lie homomorphism. If any two element of the set $\{ x,y,z\} $ are linearly dependent. Then $\gamma_L(x+L^2 \otimes y+L^2 \otimes z+L^2)=0.$
\item[$(ii)$]
Define the map
\begin{align*}
&\gamma'_2: (L/Z(L))^{(ab)}\otimes_{mod}  (L/Z(L))^{(ab)}\otimes_{mod}  (L/Z(L))^{(ab)} \rightarrow L^2/L^3\otimes_{mod}  (L/Z(L))^{(ab)}\\& \big{(}x+(L^2+Z(L)) \big{)}\otimes \big{(}y+(L^2+Z(L)) \big{)} \otimes \big{(}z+(L^2+Z(L))\big{)}\mapsto\\& \big{(}[x,y]+L^3\otimes z+(L^2+Z(L)) \big{)}+
\big{ (}[z,x]+L^3\otimes y+(L^2+Z(L))\big{)}+\\&\big{(}[y,z]+L^3\otimes x+(L^2+Z(L))\big{)}.\end{align*}
Then $\gamma'_2$
is a Lie homomorphism. Moreover,  if any two element of the set $\{ x,y,z\} $ are linearly dependent, then \[\gamma'_{2}(x+(L^2+Z(L))\otimes y+(L^2+Z(L))\otimes z+(L^2+Z(L)))=0.\]
\item[$(iii)$]The map
\begin{align*}
\gamma'_3: &(L/Z(L))^{(ab)}\otimes_{mod}  (L/Z(L))^{(ab)}\otimes_{mod}  (L/Z(L))^{(ab)}\otimes_{mod}  (L/Z(L))^{(ab)}\\& \rightarrow  L^3\otimes_{mod}  (L/Z(L))^{(ab)}~\text{given by}~\\&\big{(}x+(L^2+Z(L)) \big{)}\otimes \big{(}y+(L^2+Z(L)) \big{)} \otimes \big{(}z+(L^2+Z(L))\big{)}+ \big{(}w+(L^2+Z(L)) \big{)}\\
&\mapsto\big{(}[[x,y],z]\otimes w+(L^2+Z(L)) \big{)}+
\big{ (}[w,[x,y]]\otimes z+(L^2+Z(L))\big{)}+\\&\big{(}[[z,w],x]\otimes y+(L^2+Z(L))\big{)}+\big{(}[y,[z,w]]\otimes x+(L^2+Z(L))\big{)}
\end{align*}
is a Lie homomorphism.
\end{itemize}
\end{prop}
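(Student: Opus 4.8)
The proposition asks to check that three explicitly-defined maps are Lie-algebra homomorphisms (and that two of them vanish on the "linearly dependent" part of the domain). Since every tensor-product space appearing here is built from abelianized quotients — $L^{(ab)}$, $(L/Z(L))^{(ab)}$, $L^2/L^3$, $L^3$ — all the source and target spaces are \emph{abelian} Lie algebras: their bracket is identically zero. Hence the assertion that each $\gamma$ is a "Lie homomorphism" reduces to the statement that each $\gamma$ is a well-defined linear (module) map; the bracket-compatibility condition $\gamma([a,b]) = [\gamma(a),\gamma(b)]$ holds trivially because both sides are $0$. So the real content is \emph{well-definedness}, i.e.\ that the formulas respect (a) multilinearity and (b) the relations defining the modded-out tensor products (the relations $x\otimes x = 0$ and the images of $L^2$, $Z(L)$, etc., that are being killed in $L^{(ab)}$ and $(L/Z(L))^{(ab)}$).

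\medskip
\noindent\textbf{Part (i).} First I would define $\gamma_L$ on the free tensor space $L\otimes L\otimes L$ by the displayed formula $(x,y,z)\mapsto [x,y]\otimes z + [z,x]\otimes y + [y,z]\otimes x$ (reduced mod $L^3$ in the first slot and mod $L^2$ in the second), which is visibly trilinear, hence factors through $L\otimes_{mod} L\otimes_{mod} L$. Then I must check it descends to $L^{(ab)}\otimes_{mod} L^{(ab)}\otimes_{mod} L^{(ab)}$, i.e.\ that it kills any tensor with an entry in $L^2$. If $x\in L^2$ then $[x,y],[z,x]\in L^3$, so the first two terms die mod $L^3$, and the third term $[y,z]\otimes x$ dies because $x+L^2 = 0$ in the second tensor factor; symmetrically for $y\in L^2$ or $z\in L^2$. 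That gives a well-defined linear map, hence a Lie homomorphism into the abelian target. For the vanishing claim, suppose two of $x,y,z$ are linearly dependent, say $y = \lambda x$ (the other cases are symmetric up to relabelling). Then $[x,y] = 0$, and $[z,x]\otimes y + [y,z]\otimes x = \lambda\big([z,x]\otimes x + [x,z]\otimes x\big) = 0$ since $[x,z] = -[z,x]$. If instead, say, $z$ is dependent on $x$, the same antisymmetry collapses the pair of surviving terms. This is a short computation with no obstacle.

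\medskip
\noindent\textbf{Parts (ii) and (iii).} These are the same idea but with $Z(L)$ thrown into the quotients, so I would run the identical argument and additionally verify that the formulas kill tensors with an entry in $Z(L)$. For $\gamma'_2$: if $x\in Z(L)$ then $[x,y] = [z,x] = 0$, killing two terms, and $[y,z]\otimes x = 0$ because $x \in L^2 + Z(L)$ is zero in $(L/Z(L))^{(ab)}$ (which already mods out $L^2 + Z(L)$) — note the target's second slot is also $(L/Z(L))^{(ab)}$, so this works; and since $L^2/L^3$ sits in the first slot, membership of $[x,y]$ in $L^3$ also kills terms there. The case that an entry lies in $L^2$ is handled exactly as in (i). For $\gamma'_3$, the domain is a fourfold tensor power and the formula is the Jacobi/Hall-type expression $[[x,y],z]\otimes w + [w,[x,y]]\otimes z + [[z,w],x]\otimes y + [y,[z,w]]\otimes x$; I must check it vanishes when any of $x,y,z,w$ lies in $Z(L)$ (killing the relevant double brackets and, for the "outer" slot entry, using $x,\dots \in L^2+Z(L) \subseteq $ kernel of the abelianization) and when any entry lies in $L^2$ (then the double brackets land in $L^4 \subseteq$ well, at least the relevant terms vanish since $[L^2,L]\subseteq L^3$ and the target's first slot is $L^3$, so actually here one only needs $[[x,y],z]\in L^3$ type membership — in fact since the first tensor factor in the target is $L^3$ itself, one needs the double brackets to \emph{land in} $L^3$, which they always do, so the only relations to worry about are those coming from the $(L/Z(L))^{(ab)}$ factors). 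The map being into an abelian target again makes "Lie homomorphism" automatic once linearity and well-definedness are in hand.

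\medskip
\noindent\textbf{Expected obstacle.} There is no deep obstacle; the only thing requiring care is bookkeeping — tracking, for each of the four (resp.\ three) entries and each relation ($x\otimes x=0$, entry in $L^2$, entry in $Z(L)$), which of the four summands is killed and by which mechanism (a bracket rising into a higher term of the lower central series, versus an element dying in an abelianized quotient). The mild subtlety is that the two tensor slots in each target are killed by \emph{different} relations ($L^3$ resp.\ $L^4$-type containment in the first, and $L^2+Z(L)$ in the second), so one cannot treat the summands uniformly; I would organize the verification as a small table indexed by (which entry is special) $\times$ (which summand), and in each cell name the reason the summand vanishes. I would not write out all cases in the final text, only the representative ones, citing symmetry for the rest.
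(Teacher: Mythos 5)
Your reduction of ``Lie homomorphism'' to ``well-defined linear map'' (every source and target here is an abelian Lie algebra) is exactly what the paper treats as obvious, and your verifications of parts (i) and (ii) are correct and complete: the check that a representative lying in $L^2$ (resp.\ in $Z(L)$) kills every summand --- either because the bracket climbs into $L^3$, which the first slot $L^2/L^3$ quotients away, or because the entry dies in the abelianized second slot --- together with the antisymmetry computation for two linearly dependent entries, is precisely the computation the paper writes out (for $x=\beta y$ in (i)) before declaring (ii) and (iii) ``similar''. Up to that point you are, if anything, more careful than the printed proof.

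Part (iii) is where your argument has a genuine gap, and it sits exactly in the parenthetical you use to dismiss the $L^2$-entry case: ``one only needs the double brackets to land in $L^3$, which they always do, so the only relations to worry about are those coming from the $(L/Z(L))^{(ab)}$ factors.'' Landing in the target is not the issue; vanishing is. If, say, $x\in L^2$, the summand $[y,[z,w]]\otimes \bar x$ dies, but the other three, $[[x,y],z]\otimes\bar w$, $[w,[x,y]]\otimes\bar z$ and $[[z,w],x]\otimes\bar y$, merely lie in $L^4\otimes_{mod}(L/Z(L))^{(ab)}$, and since the first slot of the target is $L^3$ itself (not $L^3/L^4$) nothing kills them. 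Concretely, let $L$ be the free nilpotent Lie algebra of class $4$ on $a,b,c,d$ (so $Z(L)=L^4\subseteq L^2$) and take $x=[a,b]$, $y=c$, $z=d$, $w=a$: the formula returns $[[[a,b],c],d]\otimes\bar a+[a,[[a,b],c]]\otimes\bar d+[[d,a],[a,b]]\otimes\bar c$, a sum of three nonzero terms lying in distinct summands $L^3\otimes_{mod}\langle\bar u\rangle$, hence nonzero, while the domain tensor is zero because $\bar x=0$ in $(L/Z(L))^{(ab)}$. So the displayed $\gamma_3'$ does not descend for a general Lie algebra; your verification (and the statement itself) only goes through when $[L^3,L]=0$, i.e.\ for class at most $3$ --- which is the only setting in which the paper uses $\gamma_3'$ (Theorem \ref{mr}, stem of class $3$) --- or after replacing the first slot by $L^3/L^4$. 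The paper's own proof is silent here (it disposes of (iii) with ``obtained by a similar way''), so you cannot fall back on it: either add the hypothesis $L^4=0$ and check the $L^2$-entry case there, where all three troublesome brackets vanish outright, or restate (iii) with target $L^3/L^4\otimes_{mod}(L/Z(L))^{(ab)}$, for which your mechanism of ``a bracket rising into a deeper term'' does apply.
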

\begin{proof}
Clearly, $ \gamma_L $ is a Lie homomorphism.  Let $ x=\beta y,$ for a scalar $ \beta $. We claim that  $ \gamma_L(x+L^2 \otimes y+L^2 \otimes z+L^2)=0.$ Since $ [x,y]=0,$ we have \begin{align*}&\gamma_L(x+L^2 \otimes y+L^2 \otimes z+L^2)=\\&([z,x]+L^3\otimes y+L^2)+([y,z]+L^3\otimes x+L^2)=\\&
([z,\beta y]+L^3\otimes y+L^2)+([y,z]+L^3\otimes \beta y+L^2)=\\&\beta \big{(}([z,y]+L^3\otimes y+L^2)+([y,z]+L^3\otimes y+L^2 )\big{)}=\\&\beta \big{(}([z,y]+L^3\otimes y+L^2)-([z,y]+L^3\otimes y+L^2) \big{)}=0.\end{align*}
Thus $\gamma_{L}(x+L^2\otimes y+L^2\otimes z+L^2)=0.$
The cases $(ii)  $ and $(iii)  $ obtained by a similar way.
\end{proof}
The following preliminary result will also play an important role in the next.
\begin{lem}\label{kp} The following natural sequence of abelian Lie algebras
\[ L^{i}/L^{i+1} \otimes_{mod} \big{(}(Z(L)+L^{2})/L^{2}\big{)}\xrightarrow{ \tau_i'} L^{i}/L^{i+1} \otimes_{mod} L/L^{2} \xrightarrow{ \delta_i}
L^{i}/L^{i+1} \otimes_{mod} \big{(}L/(Z(L)+L^{2})\big{)}\rightarrow 0\] is exact for all $ i\geq 2.$
\end{lem}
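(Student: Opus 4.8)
The plan is to obtain the displayed sequence by applying a right‑exact functor to an obvious short exact sequence of vector spaces, so essentially no computation is needed.

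First I would record that, because $i\geq 2$, the factor $L^{i}/L^{i+1}$ is an abelian Lie algebra, and all four Lie algebras appearing here — $L^{i}/L^{i+1}$, $(Z(L)+L^{2})/L^{2}$, $L/L^{2}$ and $L/(Z(L)+L^{2})$ — are abelian and act trivially on one another; hence for these algebras the standard module tensor product $\otimes_{mod}$ agrees with the tensor product of underlying vector spaces over the ground field, and in particular $L^{i}/L^{i+1}\otimes_{mod}-$ is an exact functor. Next, the inclusion $(Z(L)+L^{2})/L^{2}\hookrightarrow L/L^{2}$ and the canonical projection $L/L^{2}\twoheadrightarrow L/(Z(L)+L^{2})$ assemble into the short exact sequence of abelian Lie algebras
\[0\rightarrow (Z(L)+L^{2})/L^{2}\rightarrow L/L^{2}\rightarrow L/(Z(L)+L^{2})\rightarrow 0.\]

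Then I would tensor this sequence on the left with $L^{i}/L^{i+1}$ over the ground field; right‑exactness of $\otimes_{mod}$ immediately gives exactness of
\[L^{i}/L^{i+1}\otimes_{mod}\big((Z(L)+L^{2})/L^{2}\big)\xrightarrow{\tau_i'}L^{i}/L^{i+1}\otimes_{mod}L/L^{2}\xrightarrow{\delta_i}L^{i}/L^{i+1}\otimes_{mod}\big(L/(Z(L)+L^{2})\big)\rightarrow 0,\]
and by construction $\tau_i'=\mathrm{id}_{L^{i}/L^{i+1}}\otimes\iota$ and $\delta_i=\mathrm{id}_{L^{i}/L^{i+1}}\otimes\rho$, where $\iota$ and $\rho$ are the inclusion and projection above; this is precisely the claimed natural sequence.

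The only point that needs a word of care is the identification of $\otimes_{mod}$ with the ordinary vector‑space tensor product in this abelian situation, so that the classical right‑exactness applies verbatim; I do not expect any genuine obstacle, since the lemma asserts only exactness in the middle term and surjectivity on the right — injectivity of $\tau_i'$ is not claimed (though it would in fact follow from flatness over a field).
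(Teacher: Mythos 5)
Your proof is correct; the paper itself offers no argument (its proof reads only ``The proof is straightforward''), and your route --- identifying $\otimes_{mod}$ with the ground-field tensor product in this abelian, trivial-action setting and tensoring the short exact sequence $0\rightarrow (Z(L)+L^{2})/L^{2}\rightarrow L/L^{2}\rightarrow L/(Z(L)+L^{2})\rightarrow 0$ with $L^{i}/L^{i+1}$ --- is precisely the straightforward argument the authors have in mind, with $\tau_i'$ and $\delta_i$ the induced maps.
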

\begin{proof}
The proof is straightforward.
\end{proof}

We now make an observation to the
 Proposition \ref{lk} and Theorem \ref{151} $(ii),$  we can easily check that the following maps are homomorphisms

$\eta_{1}:l\wedge l_1 \in L\wedge L \mapsto (l+L^2\wedge l_1+L^2)\in L/L^{2} \wedge L/L^{2},$

$\eta_{i}:x\wedge y\in L^{i}\wedge L \mapsto (x+L^{i+1}\otimes y+L^2 )\in L^{i}/L^{i+1} \otimes_{mod} L/L^{2},$

$\alpha_{i}:x\wedge z \in L^{i}\wedge L \mapsto (x \wedge z)\in  L^{i-1}\wedge L$,
and $\kappa_i':(x_1\wedge z_1 )\in L\wedge L^{i-1} \mapsto [x_1,z_1]\in L^{i}$
for all $ i\geq 2.$
 $~\text{Put}~K=\langle [x,y]\wedge z +
[z,x]\wedge y+[y,z]\wedge x|x,y,z\in L\rangle \subseteq L^2\wedge L ~\text{and}$ \[K_1=\langle[[x,y],z]\wedge  w+
[w,[x,y]]\wedge z+[[z,w],x]\wedge y+[y,[z,w]]\wedge x|w,x,y,z\in L\rangle \subseteq L^3\wedge L.\]
By remembering the last homomorphisms, we are now ready to prove
\begin{prop}\label{gg}
Consider the canonical homomorphisms \[\tau_i: L^{i}\wedge Z(L)\rightarrow L^{i}\wedge L,\]  \[\tau_i': L^{i}/L^{i+1} \otimes_{mod} \big{(}(Z(L)+L^{2})/L^{2}\big{)}\rightarrow L^{i}/L^{i+1} \otimes_{mod} L/L^{2}\]  and \[\delta_i: L^{i}/L^{i+1} \otimes_{mod} L/L^{2}\twoheadrightarrow L^{i}/L^{i+1} \otimes_{mod} \big{(}L/(Z(L)+L^{2})\big{)}\] for all $ i\geq 2$. Then
$ \alpha_i(\mathrm{Im}\tau_i)=0_{L^{i-1}\wedge L}, $ $ \eta_{i_{\big|_{\mathrm{Im}\tau_i}}}(\mathrm{Im} \tau_i)=\mathrm{Im} \tau_i'=\ker \delta_i,$ $\mathrm{Im}  \tau_2\subseteq K\subseteq \ker \alpha_2,$ $ \dim \tau'_2\leq\dim \mathrm{Im} \gamma_L\leq \dim K \leq \dim \ker \alpha_2, $  $\mathrm{Im} ( \delta_{2_{\big|_{Im \gamma_L}}})=\mathrm{Im} \gamma_2'$ and $\ker( \delta_{2_{\big|_{\mathrm{Im} \gamma_L}}})=\mathrm{Im} \tau_2'$
for all $ i\geq 2.$ Moreover, $K_1\subseteq \ker \alpha_3$ and $\dim \mathrm{Im} \gamma_3'\leq \dim K_1\leq \dim \ker \alpha_3.$
\end{prop}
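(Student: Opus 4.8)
The plan is to verify the listed assertions block by block, using three ingredients: the exact sequences already recorded in Theorem~\ref{151}$(ii)$ and Lemma~\ref{kp} (which give $\mathrm{Im}\,\alpha_{i+1}=\ker\eta_i$ and $\mathrm{Im}\,\tau_i'=\ker\delta_i$), the defining relations of the non-abelian exterior product $N\wedge L$ — above all $m\wedge[l_1,l_2]=[l_2,m]\wedge l_1-[l_1,m]\wedge l_2$ and the antisymmetry $m\wedge m'=-m'\wedge m$ for $m,m'$ in the left-hand factor — and the explicit values of $\eta_i,\delta_i,\gamma_L,\gamma_2',\gamma_3',\tau_i'$ on generators (Proposition~\ref{r}, Lemma~\ref{kp}). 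Throughout one must keep careful track of which ambient object ($L\wedge L$ versus $L^i\wedge L$) a given relation is being applied in.

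First I would prove $\alpha_i(\mathrm{Im}\,\tau_i)=0_{L^{i-1}\wedge L}$. A typical generator of $\mathrm{Im}\,\tau_i$ is $x\wedge z$ with $x\in L^i$ and $z\in Z(L)$; writing $x=\sum_j[l_j,m_j]$ with $l_j\in L$, $m_j\in L^{i-1}$ and applying the relation $m_j\wedge[l_j,z]=[z,m_j]\wedge l_j-[l_j,m_j]\wedge z$ inside $L^{i-1}\wedge L$, centrality of $z$ kills the left-hand side (since $[l_j,z]=0$) and the first term on the right (since $[z,m_j]=0$), leaving $[l_j,m_j]\wedge z=0$; summing gives $\alpha_i(x\wedge z)=0$. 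Granting this, $\eta_i|_{\mathrm{Im}\,\tau_i}(\mathrm{Im}\,\tau_i)=\mathrm{Im}\,\tau_i'=\ker\delta_i$ is immediate: $\eta_i(x\wedge z)=(x+L^{i+1})\otimes_{mod}(z+L^2)$, and as $x,z$ vary these span $L^{i}/L^{i+1}\otimes_{mod}(Z(L)+L^2)/L^2=\mathrm{Im}\,\tau_i'$, which is $\ker\delta_i$ by Lemma~\ref{kp}.

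Next I would handle the $i=2$ statements. For $\mathrm{Im}\,\tau_2\subseteq K$, write a generator of $\mathrm{Im}\,\tau_2$ as $[x,y]\wedge z$ with $z\in Z(L)$; then the Jacobi generator $[x,y]\wedge z+[z,x]\wedge y+[y,z]\wedge x$ of $K$ collapses to $[x,y]\wedge z$ because $[z,x]=[y,z]=0$. For $K\subseteq\ker\alpha_2$, I would pass through the isomorphism $\delta$ of Lemma~\ref{k12}: it sends $[x,y]\wedge z\mapsto[[\tilde x,\tilde y],\tilde z]+[R,F]$, so the Jacobi generator maps to $[[\tilde x,\tilde y],\tilde z]+[[\tilde z,\tilde x],\tilde y]+[[\tilde y,\tilde z],\tilde x]+[R,F]=[R,F]$ by the Jacobi identity in $F$, hence $\alpha_2(K)=0$. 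The chain $\dim\mathrm{Im}\,\tau_2'\le\dim\mathrm{Im}\,\gamma_L\le\dim K\le\dim\ker\alpha_2$ is then formal: $\eta_2$ maps $\mathrm{Im}\,\tau_2$ onto $\mathrm{Im}\,\tau_2'$ and $K$ onto $\mathrm{Im}\,\gamma_L$ (the Jacobi generator maps exactly to the defining generator of $\gamma_L$), and $\mathrm{Im}\,\tau_2\subseteq K\subseteq\ker\alpha_2$. Applying $\delta_2$ to the generator formula for $\gamma_L$ produces exactly the defining generator of $\gamma_2'$, so $\mathrm{Im}(\delta_2|_{\mathrm{Im}\,\gamma_L})=\mathrm{Im}\,\gamma_2'$; and since $\ker\delta_2=\mathrm{Im}\,\tau_2'$ (Lemma~\ref{kp}) and $\mathrm{Im}\,\tau_2'\subseteq\mathrm{Im}\,\gamma_L$ by the preceding containments, $\ker(\delta_2|_{\mathrm{Im}\,\gamma_L})=\mathrm{Im}\,\tau_2'$.

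Finally, for $K_1\subseteq\ker\alpha_3$ I would set $u=[x,y]$, $v=[z,w]\in L^2$ and work in $L^2\wedge L$. Applying $m\wedge[l_1,l_2]=[l_2,m]\wedge l_1-[l_1,m]\wedge l_2$ with $m=u$, $l_1=z$, $l_2=w$ gives $[u,z]\wedge w+[w,u]\wedge z=u\wedge v$, and symmetrically $[v,x]\wedge y+[y,v]\wedge x=v\wedge u$; since $u,v\in L^2$ the antisymmetry yields $u\wedge v+v\wedge u=0$, so the generator of $K_1$ lies in $\ker\alpha_3$. For $\dim\mathrm{Im}\,\gamma_3'\le\dim K_1\le\dim\ker\alpha_3$, the right inequality is this inclusion, while the left one holds because $\eta_3$, followed by the quotient onto $L^3/L^4\otimes_{mod}L/(Z(L)+L^2)$, carries the generator of $K_1$ to the defining generator of $\gamma_3'$, so $K_1$ surjects onto $\mathrm{Im}\,\gamma_3'$. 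The step most prone to slips is precisely this repeated use of $m\wedge[l_1,l_2]=[l_2,m]\wedge l_1-[l_1,m]\wedge l_2$: one needs it with a central second entry to obtain $\alpha_i(\mathrm{Im}\,\tau_i)=0$, and with $m$ a commutator lying in $L^2$ (so that antisymmetry applies) to obtain $K_1\subseteq\ker\alpha_3$, and it is easy to invoke it in the wrong ambient wedge product.
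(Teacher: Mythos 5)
Your proposal is correct and follows essentially the same route as the paper's own proof: the paper likewise kills $\alpha_i(\mathrm{Im}\,\tau_i)$ by expanding elements of $L^i$ as sums of brackets and using centrality of $z$, proves $K\subseteq\ker\alpha_2$ through the isomorphism $\delta$ of Lemma~\ref{k12} and the Jacobi identity in $F$, identifies $\eta_2(K)=\mathrm{Im}\,\gamma_L$, $\mathrm{Im}(\delta_2|_{\mathrm{Im}\,\gamma_L})=\mathrm{Im}\,\gamma_2'$, $\ker(\delta_2|_{\mathrm{Im}\,\gamma_L})=\mathrm{Im}\,\tau_2'$ on generators, and reduces $K_1\subseteq\ker\alpha_3$ to the antisymmetry $u\wedge v+v\wedge u=0$ for $u,v\in L^2$. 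Your identification of $\eta_3(K_1)$ with the generators of $\gamma_3'$ only after passing to $L^3/L^4$ mirrors exactly the (harmless in the paper's application, where the class is $3$ and $L^4=0$) looseness already present in the paper's own argument, so there is nothing essentially new or missing.
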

\begin{proof}
We claim that
$\alpha_i(x\wedge z)=0_{L^{i-1}\wedge L} $ for all $ z\in Z(L) $
and $ x\in L^i$ for all $i\geq 2$. There exists $ \bar{x} \in L\wedge L^{i-1}  $ such that
$\kappa_i'(\bar{x})=x.$ We may assume that  $\bar{x}=\sum_{t=1}^r \beta_t (l_t'\wedge l_t ),$  where $ l_t\in L^{i-1}, l_t'\in L$  and $\beta_t$ is scalar.  Then \[x=\kappa_i'(\bar{x})=\kappa_i'(\sum_{t=1}^r \beta_t (l_t'\wedge l_t ))=\sum_{t=1}^r \beta_t ([l_t',l_t] ).\]
Therefore
\begin{align*}\alpha_i(x\wedge z)= x\wedge z&=(\sum_{t=1}^r \beta_t [l_t',l_t] )\wedge z= \sum_{t=1}^r \beta_t ([l_t',l_t] \wedge z)\\&=\sum_{t=1}^r \beta_t (l_t' \wedge [l_t,z]-l_t \wedge [l_t',z])=0_{L^{i-1 } \wedge L}.\end{align*}
Thus $ \alpha_i(\text{Im} \tau_i)=0_{L^{i-1}\wedge L}. $
Consider the restriction of homomorphism $ \eta_i $ to $ \text{Im} \tau_i$ as follows
\[\eta_{i_{\big|_{\text{Im} \tau_i}}}:x\wedge z \in \text{Im} \tau_i \mapsto (x+L^{i+1}\otimes z+L^2)\in L^{i}/L^{i+1} \otimes_{mod} L/L^{2}.\]
Obviously, $\text{Im} ( \eta_{i_{\big|_{\text{Im} \tau_i}}})=\text{Im} \tau_i'.$
By invoking
 Lemma \ref{k12} , $\delta: L\wedge L\rightarrow F^2/[R,F]$ is an isomorphism and $\delta ([x,y]\wedge z +
[z,x]\wedge y+[y,z]\wedge x)=
[[\tilde{x},\tilde{y}], \tilde{z}] +
[[\tilde{z},\tilde{x}], \tilde{y}]+[[\tilde{y},\tilde{z}],\tilde{x}]+[R,F]=0.$ Using the Jacobi identities, we have $[[\tilde{x},\tilde{y}], \tilde{z}] +
[[\tilde{z},\tilde{x}], \tilde{y}]+[[\tilde{y},\tilde{z}],\tilde{x}]+[R,F]=0 $   for $x,y,z\in L.$ Thus
\[[x,y]\wedge z +
[z,x]\wedge y+[y,z]\wedge x=0_{L\wedge L}~\text{for all}~x,y,z\in L.\]
 Hence $\alpha_2(K)=0$ and so $K\subseteq \ker \alpha_2.$ By the restriction of  $\eta_2$ to $K$ and Proposition \ref{r} $(i),$ we have  $\eta_2(K)=\text{Im} \gamma_L$. Thus
$\dim \text{Im} \gamma_L\leq \dim K\leq \dim \ker \alpha_2.$
Now we show that $\text{Im} \tau'_2 \subseteq  \text{Im} \gamma_L. $
Let $ z\in Z(L) $ and $ d=\sum_{i=1}^t \alpha_i [x_i,y_i] $ for $x_i,y_i\in L.$
Since \begin{align*}&\big{(}[x_i,y_i]+L^3 \otimes z+L^2\big{)}+
\big{ (}[z,x_i]+L^3\otimes y_i+L^2\big{)}+\big{(}[y_i,z]+L^3\otimes x_i+L^2\big{)}=\\&\big{(}[x_i,y_i]+L^3 \otimes z+L^2\big{)}~\text{for all}~ 1\leq i\leq t, \end{align*}  we obtain $\big{(}[x_i,y_i]+L^3 \otimes z+L^2\big{)}\in \text{Im} \gamma_L $.  Thus
\begin{align*}
d+L^3\otimes  z+L^2&=(\sum_{i=1}^t \alpha_i [x_i,y_i]+L^3)\otimes z+L^2=\\&\sum_{i=1}^t (\alpha_i [x_i,y_i]+L^3\otimes z+L^2)\in \text{Im} \gamma_L.
\end{align*}
Therefore $\text{Im} \tau'_2 \subseteq  \text{Im} \gamma_L. $ Similarly $\text{Im} \tau_2\subseteq K\subseteq \ker \alpha_2.$
By the restriction of  $\delta_2$ to $\text{Im} \gamma_L,$ we have
\begin{align*}
&\delta_{2_{\big|_{\text{Im} \gamma_L}}}:\text{Im} \gamma_L \rightarrow L^{2}/L^{3} \otimes_{mod} L/(L^{2}+Z(L))~ \text{given by}\\
&([x,y]+L^3\otimes z+L^2 )+
([z,x]+L^3\otimes y+L^2)+([y,z]+L^3\otimes x+L^2)\mapsto \\&\big{(}[x,y]+L^3\otimes z+(L^2+Z(L)) \big{)}+
\big{ (}[z,x]+L^3\otimes y+(L^2+Z(L))\big{)}+\\&\big{(}[y,z]+L^3\otimes x+(L^2+Z(L))\big{).}
\end{align*}
Obviously, $\text{Im} ( \delta_{2_{\big|_{\text{Im} \gamma_L}}})=\text{Im} \gamma_2'$ and $\ker( \delta_{2_{\big|_{\text{Im} \gamma_L}}})=\text{Im} \tau_2'.$
 We show that $K_1\subseteq \ker \alpha_3$ and $\dim \text{Im} \gamma_3'\leq \dim K_1\leq \dim \ker \alpha_3.$
Since
\begin{align*}&\alpha_3([[x,y],z]\wedge w+[w,[x,y]]\wedge z+[[z,w],x]\wedge y+[y,[z,w]]\wedge x)=\\& [[x,y],z]\wedge w+[w,[x,y]]\wedge z+[[z,w],x]\wedge y+[y,[z,w]]\wedge x=\\&
[x,y]\wedge [z,w]-z\wedge [[x,y],w]+[w,[x,y]]\wedge z+[z,w]\wedge[x, y]\\&-x\wedge [[z,w],y]+[y,[z,w]]\wedge x=\\&
[x,y]\wedge [z,w]+z\wedge [w,[x,y]]+[w,[x,y]]\wedge z+[z,w]\wedge[x, y]\\&+x\wedge [y,[z,w]]+[y,[z,w]]\wedge x= 0_{L^2\wedge L},
\end{align*}
we have $K_1\subseteq \ker \alpha_3.$
Similarly, Proposition \ref{r} $(iii)$ implies $\eta_3(K_1)=\text{Im} \gamma_3'$
 and so
$\dim \text{Im} \gamma_3'\leq \dim K_1\leq \dim \ker \alpha_3,$ as required.
\end{proof}

\begin{thm}\label{25}
Let $ L $ be a finite dimensional nilpotent non-abelian Lie algebra of class $c.$  Then
 \begin{align*}\dim L\wedge L+\dim \mathrm{Im} \gamma_L &\leq\dim L\wedge L+\sum_{i=2}^c \dim \ker \alpha_i\\&=\dim L/L^2\wedge L/L^2+\sum_{i=2}^c \dim (L^i/L^{i+1}\otimes_{mod} L/L^2).\end{align*}

\end{thm}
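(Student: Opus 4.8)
The plan is to separate the statement into the inequality (left $\le$ middle) and the equality (middle $=$ right), and to prove them independently. After cancelling the common summand $\dim L\wedge L$, the inequality reduces to $\dim\mathrm{Im}\,\gamma_L\le\sum_{i=2}^c\dim\ker\alpha_i$. This is immediate from Proposition \ref{gg}, which already supplies $\dim\mathrm{Im}\,\gamma_L\le\dim\ker\alpha_2$, together with the trivial observation that all the summands $\dim\ker\alpha_i$ are nonnegative, so the full sum dominates its $i=2$ term.

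For the equality I would run a telescoping argument on the dimensions in the two families of exact sequences at hand. Write $a_i=\dim(L^i\wedge L)$, $k_i=\dim\ker\alpha_i$, $b_i=\dim(L^i/L^{i+1}\otimes_{mod}L/L^2)$ for $i\ge2$, and $b_1=\dim(L/L^2\wedge L/L^2)$. Proposition \ref{lk} with $K=L^2\subseteq I=L$ yields the exact sequence $L^2\wedge L\xrightarrow{\alpha_2}L\wedge L\xrightarrow{\eta_1}L/L^2\wedge L/L^2\to0$, and Theorem \ref{151}$(ii)$ yields, for each $i\ge2$, the exact sequence $L^{i+1}\wedge L\xrightarrow{\alpha_{i+1}}L^i\wedge L\xrightarrow{\eta_i}L^i/L^{i+1}\otimes_{mod}L/L^2\to0$. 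In every case exactness at $L^i\wedge L$ gives $\ker\eta_i=\mathrm{Im}\,\alpha_{i+1}$ and surjectivity of $\eta_i$, so rank--nullity applied twice gives $a_i=\dim\mathrm{Im}\,\alpha_{i+1}+b_i$ and $\dim\mathrm{Im}\,\alpha_{i+1}=a_{i+1}-k_{i+1}$; hence $a_i=a_{i+1}-k_{i+1}+b_i$ for every $1\le i\le c$.

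Now I would invoke nilpotency: since $L$ has class $c$, $L^{c+1}=0$, so $a_{c+1}=0$ and $k_{c+1}=0$ (the map $\alpha_{c+1}$ emanates from the zero space); equivalently, the sequence of Theorem \ref{151}$(ii)$ for $i=c$ forces $\eta_c$ to be an isomorphism. Summing the relations $a_i=a_{i+1}-k_{i+1}+b_i$ over $1\le i\le c$, the left-hand sides telescope against most of the right-hand sides, leaving $a_1-a_{c+1}=-\sum_{i=2}^{c+1}k_i+\sum_{i=1}^c b_i$, i.e.\ $a_1+\sum_{i=2}^c k_i=b_1+\sum_{i=2}^c b_i$. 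Translating back, this is exactly $\dim L\wedge L+\sum_{i=2}^c\dim\ker\alpha_i=\dim(L/L^2\wedge L/L^2)+\sum_{i=2}^c\dim(L^i/L^{i+1}\otimes_{mod}L/L^2)$, the desired equality; combining it with the inequality of the first paragraph gives the full chain.

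I do not expect a real obstacle here: the content is entirely in lining up the already-proven exact sequences (Proposition \ref{lk}, Theorem \ref{151}$(ii)$) with Proposition \ref{gg}. The only point demanding care is the bookkeeping of index ranges in the telescoping sum — in particular checking that the $i=c$ instance of Theorem \ref{151}$(ii)$ is legitimately used and that no boundary term is silently dropped when passing from $\sum_{i=2}^{c+1}$ to $\sum_{i=2}^{c}$ via $k_{c+1}=0$.
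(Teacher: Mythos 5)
Your proposal is correct and follows essentially the same route as the paper: the inequality comes from Proposition \ref{gg} giving $\dim\mathrm{Im}\,\gamma_L\le\dim\ker\alpha_2$ plus nonnegativity of the remaining $\dim\ker\alpha_i$, and the equality is exactly the paper's identity obtained from the exact sequences of Proposition \ref{lk} and Theorem \ref{151}$(ii)$. Your telescoping rank--nullity computation (using $L^{c+1}=0$ to kill the boundary terms) merely spells out the step the paper states without detail, so there is nothing to correct.
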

\begin{proof}
By using Proposition \ref{lk} and Theorem \ref{151} $(ii),$  the following two sequences
\begin{equation}\label{1e}
L^{2}\wedge L ~~\xrightarrow{\alpha_2} L\wedge L~~\xrightarrow{\eta_1}  L/L^{2} \wedge L/L^{2}\rightarrow 0
\end{equation} and
\begin{equation}\label{2e}
L^{i+1}\wedge L ~~\xrightarrow{\alpha_{i+1}} L^{i}\wedge L~~\xrightarrow{\eta_i}  L^{i}/L^{i+1} \otimes_{mod} L/L^{2}\rightarrow 0,
\end{equation} are exact
for all $ i\geq 2.$

Using \ref{1e} and \ref{2e}, we have
\begin{equation}\label{eq3} \dim L\wedge L+\sum_{i=2}^c \dim \ker \alpha_i=\dim (L/L^2\wedge L/L^2)+\sum_{i=2}^c (\dim L^i/L^{i+1}\otimes_{mod} L/L^2).\end{equation}

Now Proposition \ref{gg} implies $ \dim \text{Im} \gamma_L\leq \dim \ker \alpha_2.$ Hence
\ref{eq3} deduces that \begin{align*} \dim L\wedge L+\dim \text{Im} \gamma_L &\leq  \dim L\wedge L+\dim \ker \alpha_2 \\&\leq
\dim L\wedge L+\sum_{i=2}^c \dim \ker \alpha_i=\\&\dim (L/L^2\wedge L/L^2)+\sum_{i=2}^c \dim (L^i/L^{i+1}\otimes_{mod} L/L^2),\end{align*}
as required.
\end{proof}

\begin{thm}\label{j}
Let $ L $ be a finite dimensional nilpotent non-abelian Lie algebra of class $c.$  Then
 \begin{align*}\dim L\wedge L+\dim \mathrm{Im} \gamma_2' &\leq\dim L\wedge L+\sum_{i=2}^c \dim \ker \alpha_i\\&=\dim (L/L^2\wedge L/L^2)+\sum_{i=2}^c \dim (L^i/L^{i+1}\otimes_{mod} (L/Z(L))^{(ab)}).\end{align*}
\end{thm}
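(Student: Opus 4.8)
The plan is to rerun the proof of Theorem \ref{25}, replacing the module $L/L^2$ throughout by $(L/Z(L))^{(ab)}\cong L/(L^2+Z(L))$ and the homomorphism $\gamma_L$ by $\gamma_2'$; the device that makes this substitution possible is Lemma \ref{kp}.

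The first inequality is immediate from Proposition \ref{gg}. There the restriction of $\delta_2$ to $\mathrm{Im}\gamma_L$ is shown to map onto $\mathrm{Im}\gamma_2'$, so $\dim\mathrm{Im}\gamma_2'\leq\dim\mathrm{Im}\gamma_L$; combined with the chain $\dim\mathrm{Im}\gamma_L\leq\dim K\leq\dim\ker\alpha_2$ from the same proposition, this gives $\dim\mathrm{Im}\gamma_2'\leq\dim\ker\alpha_2\leq\sum_{i=2}^{c}\dim\ker\alpha_i$, whence $\dim L\wedge L+\dim\mathrm{Im}\gamma_2'\leq\dim L\wedge L+\sum_{i=2}^c\dim\ker\alpha_i$.

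For the displayed equality I would reproduce the telescoping that yields \ref{eq3} in the proof of Theorem \ref{25}, but after pushing the exact sequences \ref{1e} and \ref{2e} through Lemma \ref{kp}. For each $i\geq 2$, composing the surjection $\eta_i$ of \ref{2e} with the surjection $\delta_i$ of Lemma \ref{kp} gives a surjection $L^i\wedge L\twoheadrightarrow L^i/L^{i+1}\otimes_{mod}(L/Z(L))^{(ab)}$; by the kernel formulas recorded in Proposition \ref{gg} ($\ker\eta_i=\mathrm{Im}\alpha_{i+1}$ and $\ker\delta_i=\mathrm{Im}\tau_i'=\eta_i(\mathrm{Im}\tau_i)$) its kernel is precisely $\mathrm{Im}\alpha_{i+1}+\mathrm{Im}\tau_i$. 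Since $\mathrm{Im}\tau_i\subseteq\ker\alpha_i$ (Proposition \ref{gg}), the maps $\alpha_i$ descend to the quotients $(L^i\wedge L)/\mathrm{Im}\tau_i$, and one obtains exact sequences
\[(L^{i+1}\wedge L)/\mathrm{Im}\tau_{i+1}\ \to\ (L^{i}\wedge L)/\mathrm{Im}\tau_i\ \to\ L^{i}/L^{i+1}\otimes_{mod}(L/Z(L))^{(ab)}\ \to\ 0\qquad(i\geq 2),\]
with \ref{1e} (modified in the same fashion at the bottom) giving the step $i=1$. Summing the alternating-dimension relations of this chain exactly as \ref{eq3} is assembled, and using $L^{c+1}=0$, yields $\dim L\wedge L+\sum_{i=2}^c\dim\ker\alpha_i=\dim(L/L^2\wedge L/L^2)+\sum_{i=2}^c\dim(L^i/L^{i+1}\otimes_{mod}(L/Z(L))^{(ab)})$; at bookkeeping level one simply trades each $\dim(L^i/L^{i+1}\otimes_{mod}L/L^2)$ in \ref{eq3} for $\dim(L^i/L^{i+1}\otimes_{mod}(L/Z(L))^{(ab)})$ via the short exact sequence of Lemma \ref{kp}.

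I expect the main obstacle to be the exactness of the quotiented sequences together with the correct accounting of the terms $\dim\ker\alpha_i$ after dividing by $\mathrm{Im}\tau_i$: one has to pin down $\alpha_i^{-1}(\mathrm{Im}\tau_{i-1})$ and use in an essential way the identities $\alpha_i(\mathrm{Im}\tau_i)=0$ and $\eta_i(\mathrm{Im}\tau_i)=\mathrm{Im}\tau_i'=\ker\delta_i$ of Proposition \ref{gg}. Once that is settled, the remainder is the same dimension count as in Theorems \ref{151} and \ref{25}.
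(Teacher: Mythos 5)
Your first inequality is fine: from Proposition \ref{gg} one has $\dim\mathrm{Im}\,\gamma_2'\leq\dim\mathrm{Im}\,\gamma_L\leq\dim K\leq\dim\ker\alpha_2$, which gives the left-hand comparison. The gap is in the second step. The middle identity you set out to prove, $\dim L\wedge L+\sum_{i=2}^c\dim\ker\alpha_i=\dim(L/L^2\wedge L/L^2)+\sum_{i=2}^c\dim(L^i/L^{i+1}\otimes_{mod}(L/Z(L))^{(ab)})$, with the $\alpha_i$ as originally defined, is false whenever $Z(L)\not\subseteq L^2$: by the equality \ref{eq3} in the proof of Theorem \ref{25} the same left-hand side equals $\dim(L/L^2\wedge L/L^2)+\sum_{i=2}^c\dim(L^i/L^{i+1}\otimes_{mod}L/L^2)$, which exceeds your right-hand side by $\sum_{i=2}^c\dim(L^i/L^{i+1})\cdot\dim\big((Z(L)+L^2)/L^2\big)=mt>0$; for $L=H(1)\oplus A(1)$ the two sides are $6$ and $5$. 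Your telescoping of the quotiented sequences does not produce $\sum\dim\ker\alpha_i$ but $\sum\dim\ker\overline{\alpha}_i$, where $\overline{\alpha}_i$ is the map induced on $(L^i\wedge L)/\mathrm{Im}\,\tau_i$ and $\ker\overline{\alpha}_i=\alpha_i^{-1}(\mathrm{Im}\,\tau_{i-1})/\mathrm{Im}\,\tau_i$ --- precisely the point you flag as ``the main obstacle'' and leave unresolved. Since your route to the substantive outer inequality passes through this false identity, that inequality is not established by the argument as written. (The same middle equality does appear in the paper's statement, but the paper's proof establishes only the outer inequality, which is all that is used later, e.g.\ in Theorem \ref{51}.)

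The paper's proof avoids the issue by never changing the exact sequences: it keeps the true equality \ref{eq3} (tensoring with $L/L^2$) and passes to $(L/Z(L))^{(ab)}$ by pure subtraction. From Proposition \ref{gg}, $\delta_2$ restricted to $\mathrm{Im}\,\gamma_L$ has image $\mathrm{Im}\,\gamma_2'$ and kernel $\mathrm{Im}\,\tau_2'$, so $\dim\mathrm{Im}\,\gamma_2'=\dim\mathrm{Im}\,\gamma_L-\dim\mathrm{Im}\,\tau_2'$; the inequalities $\dim\mathrm{Im}\,\tau_i'\leq\dim\mathrm{Im}\,\tau_i\leq\dim\ker\alpha_i$ allow one to subtract $\dim\mathrm{Im}\,\tau_i'$ from each summand of \ref{eq3}, and Lemma \ref{kp} converts $\dim(L^i/L^{i+1}\otimes_{mod}L/L^2)-\dim\mathrm{Im}\,\tau_i'$ into $\dim(L^i/L^{i+1}\otimes_{mod}(L/Z(L))^{(ab)})$. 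Your quotient-and-telescope idea can be repaired to yield the same outer inequality, but only with two further inputs that your sketch does not supply: (a) the kernels in the telescoped identity must be those of $\overline{\alpha}_i$, not of $\alpha_i$; and (b) one needs $\dim\mathrm{Im}\,\gamma_2'\leq\dim\ker\overline{\alpha}_2=\dim\ker\alpha_2-\dim\mathrm{Im}\,\tau_2$, which holds because $\delta_2\circ\eta_2$ kills $\mathrm{Im}\,\tau_2$ (as $\eta_2(\mathrm{Im}\,\tau_2)=\mathrm{Im}\,\tau_2'=\ker\delta_2$), so that $\mathrm{Im}\,\gamma_2'=\delta_2\eta_2(K)$ is a quotient of $K/\mathrm{Im}\,\tau_2\subseteq\ker\alpha_2/\mathrm{Im}\,\tau_2$. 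Without these, the proposal does not prove the statement.
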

\begin{proof}
By Proposition \ref{gg}, we have $\dim \text{Im} \gamma_{L}\leq \dim \ker \alpha_2.$ Thus
\begin{align*}
&\dim L\wedge L+\dim \text{Im} \gamma_{L}-\dim \text{Im} \tau_2'\leq\\& \dim L\wedge L+\dim \ker \alpha_2-\dim \text{Im} \tau_2'.\end{align*}
 Proposition  \ref{gg} implies $ \dim \text{Im}\tau_i'\leq  \dim \text{Im}\tau_i \leq \dim \ker \alpha_i$ for all $i\ge 2$. Hence
 \begin{align*} &\dim L\wedge L+\dim \ker \alpha_2-\dim \text{Im} \tau_2'\\\leq& \dim L\wedge L+\sum_{i=2}^c (\dim \ker \alpha_i-\dim \text{Im} \tau_i').
 \end{align*}Hence $$\dim L\wedge L+\dim \text{Im} \gamma_{L}-\dim \text{Im} \tau_2'\leq \dim L\wedge L+\sum_{i=2}^c (\dim \ker \alpha_i-\dim \text{Im} \tau_i').$$
  On the other hand, with the aid of Theorem \ref{25}, we have
\begin{align*}
&\dim L\wedge L+\sum_{i=2}^c (\dim \ker \alpha_i-\dim \text{Im} \tau_i')\\&=\dim (L/L^2\wedge L/L^2)+\sum_{i=2}^c \big(\dim (L^i/L^{i+1}\otimes_{mod} L/L^2)-\dim \text{Im} \tau_i'\big).\end{align*}

Now Lemma \ref{kp}  implies
\[ \dim (L^i/L^{i+1}\otimes_{mod} L/L^2)=\dim (L^i/L^{i+1}\otimes_{mod} (L/Z(L))^{(ab)})+\dim \text{Im} \tau_i',\] for $i\geq 2.$
Thus
\begin{align*}
&\dim (L/L^2\wedge L/L^2)+\sum_{i=2}^c \big(\dim (L^i/L^{i+1}\otimes_{mod} L/L^2)-\dim \text{Im} \tau_i'\big)=\\&\dim (L/L^2\wedge L/L^2)+\sum_{i=2}^c \dim (L^i/L^{i+1}\otimes_{mod} (L/Z(L))^{(ab)}).\end{align*}
By the fact that $\dim\text{Im} \gamma_L-\dim \text{Im} \tau_2'=\dim \text{Im} \gamma_2'$ in the proof of Proposition  \ref{gg}, we have
\begin{align*}
\dim L\wedge L+\dim \text{Im} \gamma_2' &\leq \dim L/L^2\wedge L/L^2+\sum_{i=2}^c \dim (L^i/L^{i+1}\otimes_{mod}  (L/Z(L))^{(ab)}),\end{align*} as required.
\end{proof}
Recall that a Lie algebra $L$ is called stem provided that $Z(L)\subseteq L^2$.
\begin{thm}\label{mr}
Let $ L $ be a finite dimensional nilpotent stem Lie algebra of class $3.$  Then,  we have
\begin{align*}&\dim L\wedge L+\dim \mathrm{Im} \gamma_2' +\dim \mathrm{Im} \gamma_3' \\&\leq \dim (L/L^2\wedge L/L^2)+ \dim (L^2/L^{3}\otimes_{mod} L^{(ab)})+\dim (L^3\otimes_{mod} L^{(ab)}).\end{align*}
\end{thm}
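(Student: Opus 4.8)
The plan is to reuse the dimension bookkeeping behind Theorems \ref{25} and \ref{j}, specialised to $c=3$, and to feed in \emph{both} inequalities of Proposition \ref{gg} — the one for $\gamma_L$ (equivalently $\gamma_2'$) and the one for $\gamma_3'$ — at the same time.

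First I would set up a dimension identity. Exactness of the sequence \ref{1e} for the pair $(L^2,L)$ gives $\dim L\wedge L=\dim(L/L^2\wedge L/L^2)+\dim L^2\wedge L-\dim\ker\alpha_2$, and exactness of \ref{2e} with $i=2$ gives $\dim L^2\wedge L=\dim(L^2/L^3\otimes_{mod}L/L^2)+\dim L^3\wedge L-\dim\ker\alpha_3$. Since $L$ has class $3$ we have $L^4=0$, so $L^4\wedge L=0$, $\alpha_4=0$, and \ref{2e} with $i=3$ forces $\eta_3$ to be an isomorphism; hence $\dim L^3\wedge L=\dim(L^3/L^4\otimes_{mod}L/L^2)=\dim(L^3\otimes_{mod}L^{(ab)})$. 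Chaining the three relations (this is exactly \ref{eq3} for $c=3$, using $L^{(ab)}=L/L^2$) yields
\[\dim L\wedge L+\dim\ker\alpha_2+\dim\ker\alpha_3=\dim(L/L^2\wedge L/L^2)+\dim(L^2/L^3\otimes_{mod}L^{(ab)})+\dim(L^3\otimes_{mod}L^{(ab)}).\]

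Next I would bound the two $\gamma'$-terms. Proposition \ref{gg} already records $\dim\mathrm{Im}\gamma_L\le\dim K\le\dim\ker\alpha_2$ and $\dim\mathrm{Im}\gamma_3'\le\dim K_1\le\dim\ker\alpha_3$; combined with the relation $\dim\mathrm{Im}\gamma_2'=\dim\mathrm{Im}\gamma_L-\dim\mathrm{Im}\tau_2'$ extracted from that proof, this gives $\dim\mathrm{Im}\gamma_2'\le\dim\ker\alpha_2$. The stem hypothesis $Z(L)\subseteq L^2$ makes $(Z(L)+L^2)/L^2=0$, so $\tau_2'$ has zero source, $\dim\mathrm{Im}\gamma_2'=\dim\mathrm{Im}\gamma_L$, and this is what lets the right-hand side carry $L^{(ab)}$ rather than the smaller $(L/Z(L))^{(ab)}$ appearing in Theorem \ref{j}. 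Adding the two estimates gives $\dim\mathrm{Im}\gamma_2'+\dim\mathrm{Im}\gamma_3'\le\dim\ker\alpha_2+\dim\ker\alpha_3$; adding $\dim L\wedge L$ to both sides and substituting the displayed identity produces the asserted inequality.

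The one point needing care — and what I would regard as the crux — is that the \emph{full} space $L^3\wedge L$, not merely a quotient of it, must appear in the identity above: it is the class-$3$ hypothesis, via $L^4=0$, that makes $\eta_3$ an isomorphism and lets the telescoping in \ref{eq3} terminate, so that $\dim(L^3\otimes_{mod}L^{(ab)})$ rather than $\dim(L^3/L^4\otimes_{mod}L^{(ab)})$ stands on the right. Everything else is linear-algebra bookkeeping over the exact sequences \ref{1e}, \ref{2e} and the inequalities of Proposition \ref{gg}.
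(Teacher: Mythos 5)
Your proof is correct and takes essentially the same route as the paper: the paper likewise combines the telescoping dimension identity behind Theorems \ref{25} and \ref{j} with the bounds $\dim\mathrm{Im}\gamma_2'\le\dim\ker\alpha_2$ and $\dim\mathrm{Im}\gamma_3'\le\dim K_1\le\dim\ker\alpha_3\le\dim L^3\wedge L=\dim(L^3\otimes_{mod}L^{(ab)})$ from Proposition \ref{gg}, the last equality coming from Lemma \ref{943} (your appeal to Theorem \ref{151}$(ii)$ with $L^4=0$ is the same fact). Your write-up simply makes explicit the bookkeeping that the paper compresses into ``the result directly obtained from Theorem \ref{j}.''
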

\begin{proof}

Proposition \ref{gg} implies $\dim \text{Im} \gamma_3'\leq \dim K_1\leq \dim \ker \alpha_3.$ But   $\dim\ker \alpha_3\leq \dim L^{3}\wedge L$ and Lemma \ref{943} implies that  $\dim L^{3}\wedge L= \dim L^{3}\otimes_{mod} L^{(ab)}$.  Now the result directly obtained from Theorem \ref{j}.
\end{proof}
Looking the proof of \cite[Theorem 3.1]{ni}, we have
\begin{prop}\label{d}
Let $ L $ be a nilpotent Lie algebra of dimension $ n $ such that $ \dim L^2=1 $. Then $L\cong H(m)\oplus A(n-2m-1).$
\begin{itemize}
\item[$(i)$] If $ m=1,$ then $ \dim\mathcal{M}(L)=\dfrac{1}{2}(n-1)(n-2)+1.$
\item[$(ii)$] If $ m\geq 2,$ then $ \dim\mathcal{M}(L)=\dfrac{1}{2}(n-1)(n-2)-1.$
\end{itemize}

\end{prop}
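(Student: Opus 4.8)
The plan is to first pin down the isomorphism type of $L$ and then compute $\dim\mathcal{M}(L)$ by combining Lemma~\ref{1kg} with Lemma~\ref{2}.

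\emph{Step 1: the decomposition $L\cong H(m)\oplus A(n-2m-1)$.} Since $L$ is nilpotent with $\dim L^2=1$, the lower central series gives $L^3\subsetneq L^2$, hence $L^3=0$; thus $L$ has class two and $L^2\subseteq Z(L)$. Fixing a nonzero $z\in L^2$, the bracket descends to an alternating bilinear form
\[
\beta\colon L/Z(L)\times L/Z(L)\longrightarrow L^2,\qquad (a+Z(L),\,b+Z(L))\longmapsto [a,b],
\]
which is nondegenerate by the very definition of $Z(L)$. Over any field a nondegenerate alternating form admits a symplectic basis, so $\dim L/Z(L)=2m$ is even and we may choose $\bar x_1,\bar y_1,\dots,\bar x_m,\bar y_m$ in $L/Z(L)$ with $[x_i,y_i]=z$ and all other basis brackets zero. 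Lifting this basis and choosing a vector-space complement $W$ of $L^2$ in $Z(L)$ (so $\dim W=n-2m-1$), the span of $x_1,y_1,\dots,x_m,y_m,z$ is a copy of $H(m)$ and $W$ is a central abelian ideal complementary to it, giving $L\cong H(m)\oplus A(n-2m-1)$. This is precisely the structure read off in the proof of \cite[Theorem 3.1]{ni}.

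\emph{Step 2: the dimension of $\mathcal{M}(L)$.} Apply Lemma~\ref{1kg} to the summands $H(m)$ and $A(n-2m-1)$:
\[
\dim\mathcal{M}(L)=\dim\mathcal{M}(H(m))+\dim\mathcal{M}(A(n-2m-1))+\dim\bigl(H(m)^{(ab)}\otimes_{mod}A(n-2m-1)^{(ab)}\bigr).
\]
Here $\dim H(m)^{(ab)}=\dim H(m)-\dim H(m)^2=2m$ and $A(n-2m-1)^{(ab)}=A(n-2m-1)$ has dimension $n-2m-1$, so the tensor summand contributes $2m(n-2m-1)$; by Lemma~\ref{2}$(i)$, $\dim\mathcal{M}(A(n-2m-1))=\tfrac12(n-2m-1)(n-2m-2)$. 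For $(i)$ substitute $\dim\mathcal{M}(H(1))=2$ from Lemma~\ref{2}$(ii)$; for $(ii)$ substitute $\dim\mathcal{M}(H(m))=2m^2-m-1$ from Lemma~\ref{2}$(iii)$. In each case, writing $k=n-2m-1$ and expanding in terms of $2m+k=n-1$, the right-hand side collapses to $\tfrac12(n-1)(n-2)+1$ in case $(i)$ and to $\tfrac12(n-1)(n-2)-1$ in case $(ii)$.

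There is no genuine obstacle: Step~1 is the standard structure theory for class-two algebras with one-dimensional commutator, and Step~2 is a direct application of the cited lemmas followed by elementary polynomial bookkeeping. The only point to keep an eye on is the boundary case $n=2m+1$, where $A(n-2m-1)$ is trivial and $L=H(m)$; the formulas remain valid there since the tensor term and $\dim\mathcal{M}(A(0))$ both vanish.
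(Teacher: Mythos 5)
Your proof is correct and follows essentially the same route as the paper, which justifies Proposition~\ref{d} simply by pointing to the proof of \cite[Theorem 3.1]{ni}: there, too, $\dim L^2=1$ forces $L\cong H(m)\oplus A(n-2m-1)$, and the multiplier is then computed from Lemma~\ref{1kg} together with the known values in Lemma~\ref{2}. The only difference is that you spell out the symplectic-basis argument for the decomposition and the final arithmetic, which the paper leaves to the cited reference; both steps check out, including the boundary case $n=2m+1$.
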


\begin{lem}\label{ller}
Let $L$ be a  finite dimensional  nilpotent Lie algebra and $\dim L/L^2=1.$ Then $\dim L=1.$
\end{lem}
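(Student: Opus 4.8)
The plan is to mimic the Burnside–basis argument: show that a single element whose image spans $L/L^2$ already generates $L$ as a Lie algebra, and then observe that a one-generated Lie algebra is abelian.

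First I would choose $x\in L$ so that $x+L^2$ spans the one-dimensional space $L/L^2$; then $L=\langle x\rangle+L^2$, where $\langle x\rangle$ denotes the subalgebra generated by $x$. Since $[x,x]=0$, the subalgebra $\langle x\rangle$ is just the line spanned by $x$, so it is abelian and $\dim\langle x\rangle\le 1$.

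The key step is to prove $\langle x\rangle=L$ using nilpotency. I would argue by induction on $k$ that $L=\langle x\rangle+L^{k}$ for every $k\ge 2$. The base case $k=2$ is exactly $L=\langle x\rangle+L^2$. For the inductive step, assuming $L=\langle x\rangle+L^{k}$, one computes
\[ L^2=[L,L]=[\langle x\rangle+L^{k},\,\langle x\rangle+L^{k}]\subseteq[\langle x\rangle,\langle x\rangle]+[\langle x\rangle,L^{k}]+[L^{k},L^{k}]\subseteq\langle x\rangle+L^{k+1}, \]
using $[\langle x\rangle,\langle x\rangle]=0$, $[\langle x\rangle,L^{k}]\subseteq[L,L^{k}]=L^{k+1}$, and $[L^{k},L^{k}]\subseteq L^{2k}\subseteq L^{k+1}$. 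Hence $L=\langle x\rangle+L^2\subseteq\langle x\rangle+L^{k+1}$, which closes the induction. Since $L$ is nilpotent, there is $c$ with $L^{c+1}=0$; taking $k=c+1$ gives $L=\langle x\rangle$.

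Finally, $L=\langle x\rangle$ is abelian, so $L^2=0$ and therefore $\dim L=\dim L/L^2=1$, as claimed (note $L\ne 0$ since $\dim L/L^2=1$). I do not anticipate any real obstacle here; the only point requiring a little care is the bookkeeping with the lower central series in the inductive step and invoking nilpotency to terminate it.
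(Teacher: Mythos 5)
Your proof is correct. The paper disposes of this in one line by quoting the standard fact that for a finite dimensional nilpotent Lie algebra the Frattini subalgebra coincides with $L^2$, so $L^2$ consists of non-generators, a single coset representative of $L/L^2$ generates $L$, and a one-generated Lie algebra is abelian, forcing $L^2=0$ and $\dim L=\dim L/L^2=1$. You reach the same conclusion by proving the generation step from scratch: your induction $L=\langle x\rangle+L^{k}$ along the lower central series (indeed your computation even gives $L^2\subseteq L^{k+1}$ at each stage) is exactly a direct, self-contained verification of the non-generator property of $L^2$ in the one case needed, terminated by nilpotency. So the two arguments rest on the same idea; the paper's buys brevity by citing the Frattini result, while yours buys independence from that citation at the cost of a short induction. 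The only points worth stating explicitly in your write-up are that $\langle x\rangle$ is one-dimensional because $[x,x]=0$, which you did, and that the final step uses $\dim L=\dim\langle x\rangle\le 1$ together with $\dim L/L^2=1$ to pin down $\dim L=1$; both are handled correctly.
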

\begin{proof}
Since $\dim L/L^2=1$ and the Frattini subalgebra $L$ is equal to $L^2$, we have $\dim L=1,$ as required.
\end{proof}
The following theorem improves Theorem \ref{5}.
\begin{thm}\label{51} Let $L$ be an $n$-dimensional non-abelian nilpotent Lie algebra of class $c$ with the derived subalgebra of dimension $ m$ and $ t=\dim (Z(L)/Z(L)\cap L^2).$  Then \[
\dim \mathcal{M}(L)\leq \dfrac{1}{2}(n+m-2)(n-m-1)-t(m-1)+1.\]
In particular, when $m = 1$ the bound is attained if and only if $L=H(1)\oplus A(n-3).$
\end{thm}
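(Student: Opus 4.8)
The plan is to bound $\dim(L\wedge L)$ by means of Theorem \ref{j} and then pass to $\mathcal{M}(L)$ through Lemma \ref{j1}, which gives $\dim\mathcal{M}(L)=\dim(L\wedge L)-m$. So first I would evaluate the right-hand side of Theorem \ref{j}. Since $L/L^2=L^{(ab)}$ is abelian of dimension $n-m$, one has $\dim\big(L/L^2\wedge L/L^2\big)=\binom{n-m}{2}$. Writing $s=\dim (L/Z(L))^{(ab)}$, each term $\dim\big(L^i/L^{i+1}\otimes_{mod}(L/Z(L))^{(ab)}\big)$ equals $s\,\dim(L^i/L^{i+1})$, so summing over $i=2,\dots,c$ produces $s\,\dim L^2=sm$. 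A short dimension count gives $\dim(Z(L)+L^2)=\dim Z(L)+\dim L^2-\dim(Z(L)\cap L^2)=t+m$, hence $s=n-m-t$. Substituting into Theorem \ref{j} and invoking Lemma \ref{j1},
\[
\dim\mathcal{M}(L)\ \le\ \binom{n-m}{2}+m(n-m-t)-m-\dim\mathrm{Im}\,\gamma_2'.
\]

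Next I would compare this with the claimed bound. Expanding $\tfrac12(n+m-2)(n-m-1)=\binom{n-m}{2}+(m-1)(n-m-1)$ and simplifying, the right-hand side above equals $\tfrac12(n+m-2)(n-m-1)-t(m-1)+1+(s-2)-\dim\mathrm{Im}\,\gamma_2'$. Hence the whole theorem reduces to the single rank inequality
\[
\dim\mathrm{Im}\,\gamma_2'\ \ge\ s-2 .
\]
This is the heart of the matter. When $s\le 2$ it is trivial (ranks are non-negative), so assume $s\ge 3$. Put $V=(L/Z(L))^{(ab)}=L/(L^2+Z(L))$; the Lie bracket induces a well-defined alternating bilinear map $\beta\colon\Lambda^2 V\to L^2/L^3$ (well-definedness holds because $[a,y]\in L^3$ for $a\in L^2$ and $[z,y]=0$ for $z\in Z(L)$), it is onto since $L^2/L^3$ is spanned by brackets, and $L^2/L^3\ne 0$ because $L$ is nilpotent non-abelian, so $\beta\ne 0$. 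Choose $v_1,v_2\in V$ with $e:=\beta(v_1\wedge v_2)\ne 0$ and extend $\{v_1,v_2\}$ to a basis $v_1,\dots,v_s$ of $V$. Composing $\gamma_2'$ with the canonical surjection $L^2/L^3\otimes_{mod}V\twoheadrightarrow L^2/L^3\otimes_{mod}\big(V/\langle v_1,v_2\rangle\big)$, the two ``cross'' summands of $\gamma_2'(v_1\otimes v_2\otimes v_k)$ are annihilated and one is left with $e\otimes\bar v_k$; since $e\ne 0$ and $\bar v_3,\dots,\bar v_s$ form a basis of $V/\langle v_1,v_2\rangle$, the elements $e\otimes\bar v_3,\dots,e\otimes\bar v_s$ are linearly independent. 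Therefore $\dim\mathrm{Im}\,\gamma_2'\ge s-2$, which closes the argument.

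For the ``in particular'' assertion, when $m=1$ the term $t(m-1)$ vanishes and the bound becomes that of Theorem \ref{5}. By Proposition \ref{d}, $\dim L^2=1$ forces $L\cong H(m')\oplus A(n-2m'-1)$ for some $m'\ge 1$, with $\dim\mathcal{M}(L)=\tfrac12(n-1)(n-2)+1$ when $m'=1$ and $\dim\mathcal{M}(L)=\tfrac12(n-1)(n-2)-1$ when $m'\ge 2$; so equality in the bound occurs precisely for $m'=1$, i.e.\ for $L\cong H(1)\oplus A(n-3)$, and this algebra does attain it.

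The step I expect to be the main obstacle is the rank estimate $\dim\mathrm{Im}\,\gamma_2'\ge s-2$. Although the projection trick above makes its proof short, the real point is organizational: one must feed $\gamma_2'$ (rather than merely $\gamma_L$, as in Theorem \ref{25}) into Theorem \ref{j}, so that the dependence on $t$ of the improved bound appears automatically; the bookkeeping identity $\dim(Z(L)+L^2)=t+m$ is then what ties everything together.
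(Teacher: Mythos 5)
Your proposal is correct and follows essentially the same route as the paper: bound $\dim(L\wedge L)+\dim\mathrm{Im}\,\gamma_2'$ via Theorem \ref{j}, convert to $\mathcal{M}(L)$ with Lemma \ref{j1}, reduce everything to the estimate $\dim\mathrm{Im}\,\gamma_2'\geq \dim L/(Z(L)+L^2)-2$, and settle the $m=1$ equality case by Proposition \ref{d}. Your projection onto $L^2/L^3\otimes_{mod}\big(V/\langle v_1,v_2\rangle\big)$ is just a slightly streamlined form of the paper's direct-sum component argument for the same linear-independence claim, so there is no substantive difference.
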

\begin{proof}
If $ m=1,$ then the result holds by Proposition \ref{d}.
Thus we may assume that $m > 1.$ By invoking Lemmas \ref{2} and \ref{j1}, $ \dim L/L^2\wedge L/L^2= \dim \mathcal{M}(L/L^2)=\dfrac{1}{2}(n - m )(n - m - 1).$
Therefore Theorem \ref{j} implies
\begin{align*}\dim L\wedge L+\dim \text{Im} \gamma_2'&\leq\dim L\wedge L+\sum_{i=2}^c \dim \ker \alpha_i\\&=\dim (L/L^2\wedge L/L^2)+\sum_{i=2}^c \dim (L^i/L^{i+1}\otimes_{mod} (L/Z(L))^{(ab)})\\=&\dfrac{1}{2}(n - m )(n - m - 1)+\sum_{i=2}^c \dim (L^i/L^{i+1}\otimes_{mod} (L/Z(L))^{(ab)}).\end{align*}
On the other hand, \begin{align*}\sum_{i=2}^c \dim (L^i/L^{i+1}\otimes_{mod} (L/Z(L))^{(ab)})&=\dim \big{(}(\bigoplus_{i=2}^c  L^i/L^{i+1})\otimes_{mod} (L/Z(L))^{(ab)}\big{)}\\& = \dim L^2 \dim (L/Z(L))^{(ab)}=m(n-m-t).
\end{align*}
Thus
\begin{align*}
\dim L\wedge L+\dim \text{Im} \gamma_2'\leq \dfrac{1}{2}(n - m )(n - m - 1)+m(n-m-t).
\end{align*}
Now we are going to obtain a lower bound for the dimension of $\text{Im} \gamma_2'.$

If $\dim (L/Z(L))^{(ab)}= 1,$ by using Lemma \ref{ller}, we have  $\dim(L/Z(L))=1, $  and so $L$ is abelian that is impossible. Hence  $ \dim (L/Z(L))^{(ab)}\geq 2.$ Set $  d=\dim (L/(Z(L)+L^2)).$
We claim that $ d-2 \leq \dim \text{Im} \gamma_2'$ for $ d\geq 2.$
If $d= \dim (L/(Z(L)+L^2))=\dim (L/Z(L))^{(ab)}=2,$ then $ \text{Im} \gamma_2'=0,$ by Proposition \ref{r} $(ii).$ Thus $ d-2=0=\dim \text{Im} \gamma_2'.$
Suppose that $ d=\dim (L/(Z(L)+L^2)) \geq 3.$ We can choose a basis
\[ \{x_1+Z(L)+L^2 ,\ldots, x_d+Z(L)+L^2\} \] for $L/(Z(L)+L^2)  $
such that $ [x_1,x_2]+L^3 $ is non-trivial in $ L^2/L^3.$
We claim that all elements of the set \[B=\{ \gamma_2'\big{(}(x_1+(Z(L)+L^2))\otimes (x_2+(Z(L)+L^2))\otimes (x_i+(Z(L)+L^2))\big{)}|3\leq i\leq d\}\] are linearly  independent. Since
\[ L^2/L^3 \otimes_{mod} (L/Z(L))^{(ab)}\cong \bigoplus_{i=1}^d  \big(L^2/L^3 \otimes_{mod} \langle x_i+(Z(L)+L^2) \rangle\big)\]
and for $i\geq 3$ \begin{align*}
&\gamma_2'\big{(}x_1+(Z(L)+L^2)\otimes x_2+(Z(L)+L^2)\otimes x_i+(Z(L)+L^2 )\big{)}\\&=  [x_1,x_2]+L^3\otimes x_i+(L^2 +Z(L))+
[x_i,x_1]+L^3\otimes x_2+(L^2+Z(L))\\&+ [x_2,x_i]+L^3\otimes x_1+(L^2+Z(L)),
\end{align*}
we have \begin{align*}
&\gamma_2'\big{(}x_1+(Z(L)+L^2)\otimes x_2+(Z(L)+L^2)\otimes x_i+(Z(L)+L^2 )\big{)}\in \\& \langle [x_1,x_2]+L^3\otimes x_i+(L^2 +Z(L))\rangle \oplus
\langle [x_i,x_1]+L^3\otimes x_2+(L^2+Z(L))\rangle \oplus \\&\langle [x_2,x_i]+L^3\otimes x_1+(L^2+Z(L))\rangle.
\end{align*}
Since $ [x_1,x_2]\notin L^3 $ and $ x_i\notin L^2+Z(L),$
 $[x_1,x_2]+L^3\otimes x_i+(L^2 +Z(L)) $ is non-trivial element in $ L^2/L^3 \otimes_{mod} \langle x_i+(Z(L)+L^2) \rangle,$
$ \gamma_2'\big{(}x_1+(Z(L)+L^2)\otimes x_2+(Z(L)+L^2)\otimes x_i+(Z(L)+L^2 )\big{)}\neq 0.$
Hence all elements of  \begin{align*} \gamma_2'( B)&= \{ [x_1,x_2]+L^3\otimes x_i+(L^2 +Z(L))+
[x_i,x_1]+L^3\otimes x_2+(L^2+Z(L))\\&+ [x_2,x_i]+L^3\otimes x_1+(L^2+Z(L))|3\leq i\leq d\}
\end{align*} are linearly independent  and so
$ d-2 \leq \dim \text{Im} \gamma_2'.$ By Lemma \ref{j1}, we have
\begin{align*}
\dim \mathcal{M}(L)+\dim L^2+d-2 \leq \dfrac{1}{2}(n - m )(n - m - 1)+ m(n-m-t).
\end{align*}
Since $d=n-m-t,  $ we have
\begin{align*}
&\dim \mathcal{M}(L)+m+(n-m-t-2) \leq \dfrac{1}{2}(n - m )(n - m - 1)+ m(n-m-t)=\\& \dfrac{1}{2}(n - m )(n - m - 1)+ (m-1)(n - m - 1)-(m-1)(n - m - 1)+m(n-m-t)=\\&\dfrac{1}{2} (n + m - 2)(n - m - 1)-m(n-m)+(n-m-1)+m+m(n-m-t)=\\&\dfrac{1}{2} (n + m - 2)(n - m - 1)+(n-m-1)+m-mt=\\&\dfrac{1}{2} (n + m - 2)(n - m - 1)+n-1-mt.
\end{align*}
Thus
\begin{align*}
\dim \mathcal{M}(L) &\leq \dfrac{1}{2} (n + m - 2)(n - m - 1)+n-1-mt-m-(n-m-t-2)\\& \dfrac{1}{2} (n + m - 2)(n - m - 1)-mt-1+t+2=\\&\dfrac{1}{2} (n + m - 2)(n - m - 1)-t(m-1)+1,\end{align*}
as claimed. If $ m=1,$ then the converse holds by Theorem \ref{5}.
\end{proof}

According to the notation and terminology of the classification of nilpotent Lie algebras of dimension at most 6 in \cite{cic}, let \[L_{5,8}=\langle x_1,\ldots, x_5 | [x_1, x_2] = x_4, [x_1, x_3] = x_5\rangle\] and \[L_{6,26}=\langle   x_1,\ldots, x_6 | [x_1, x_2] = x_4, [x_1, x_3] = x_5, [x_2, x_3] = x_6 \rangle.\]
Note that from the notation of \cite{ha, ni2}, $L_{5,8}$ is also denoted by $L(4, 5, 2, 4)$.

The following example shows that the upper bound of Theorem \ref{51} can be obtained.
\begin{ex}
Let $ L=L_{5,8}\oplus A(1).$ By Lemmas \ref{1kg} and \ref{f1},
we have
\[\dim \mathcal{M}(L)=\dim \mathcal{M}(L_{5,8})+\dim L_{5,8} /(L_{5,8})^2=6+3=9. \]  Since $n=6, m=2$ and $t=1,$
\[\dim \mathcal{M}(L)=\dfrac{1}{2} (n + m - 2)(n - m - 1)-t(m-1)+1=\dfrac{1}{2} 6(6-3)-1+1=9.\]
\end{ex}
The following lemma gives the structure of all $n$-dimensional nilpotent Lie algebra with the derived subalgebra of dimension two when the bound of Theorem \ref{5} is attained.
\begin{lem}\label{f1}
Let $L$ be an $n$-dimensional nilpotent Lie algebra with the derived subalgebra of dimension two. Then
$ \dim \mathcal{M}(L)=\dfrac{1}{2}n(n-3)+1 $ if and only if $ L\cong L_{5,8}.$
\end{lem}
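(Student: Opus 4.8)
The plan is to prove the two implications separately. For the ``if'' part one computes $\dim\mathcal{M}(L_{5,8})=6=\frac12\cdot5\cdot2+1$ directly: writing $L_{5,8}=F/R$ with $F$ the free Lie algebra on $y_1,y_2,y_3$ and $R$ the ideal generated by $[y_2,y_3]$ together with $F^3$, a graded count of $(R\cap F^2)/[R,F]$ shows that its degree‑two part is $1$‑dimensional (spanned by $[y_2,y_3]+[R,F]$) while its degree‑three part is $\dim F^3-\dim[\langle[y_2,y_3]\rangle,F^1]=8-3=5$, so $\dim\mathcal{M}(L_{5,8})=6$ (this value is also in the tables of \cite{cic,ni2}). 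For the ``only if'' part, suppose $\dim\mathcal{M}(L)=\frac12 n(n-3)+1$ with $\dim L^2=m=2$. Theorem \ref{51} then forces $t(m-1)=t=0$, i.e. $Z(L)\subseteq L^2$, so $L$ is stem; being non‑abelian it has class $c\geq2$.

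Next I would rule out $c\geq3$. Since $\dim L^2=2$ and $L^2\supsetneq L^3\supsetneq\cdots$, class $\geq3$ forces $c=3$ and $\dim L^3=1$; then $Z(L)\cap L^2=L^3$, hence $Z(L)=L^3$, $(L/Z(L))^{(ab)}\cong L/L^2$, and $\dim(L^2/L^3\otimes_{mod}L^{(ab)})=\dim(L^3\otimes_{mod}L^{(ab)})=n-2$. Substituting these, together with $\dim L\wedge L=\dim\mathcal{M}(L)+2$ (Lemma \ref{j1}) and $\dim(L/L^2\wedge L/L^2)=\frac12(n-2)(n-3)$ (Lemmas \ref{j1}, \ref{2}), into Theorem \ref{mr} gives $\dim\mathrm{Im}\gamma_2'+\dim\mathrm{Im}\gamma_3'\leq n-4$. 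But $\dim\mathrm{Im}\gamma_2'\geq n-4$ by the estimate inside the proof of Theorem \ref{51} (with $d=\dim L/(Z(L)+L^2)=n-2$), so $\dim\mathrm{Im}\gamma_3'=0$. A short Jacobi computation rules this out: choosing $a,b,x_0$ with $e:=[[a,b],x_0]\neq0$ one gets $\gamma_3'(\bar a\otimes\bar b\otimes\bar x_0\otimes\bar w)=e\otimes\big(\bar w-\mu(w)\bar x_0+\nu_a(w)\bar b-\nu_b(w)\bar a\big)$ for suitable linear functionals $\mu,\nu_a,\nu_b$, and simultaneous vanishing of these, combined with Jacobi on $a,b,x_0$, forces $3e=0$; the residual case $n=4$ leaves only the $4$‑dimensional filiform Lie algebra, whose Schur multiplier has dimension $2<3$ by the same free‑presentation count. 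Hence $c=2$, so $Z(L)=L^2$; writing $k:=\dim L/L^2$, Lemma \ref{llll} gives $k\geq3$, i.e. $n\geq5$.

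Now apply Theorem \ref{lkk}. Its five‑term exact sequence together with $\dim(L^2\otimes_{mod}L^{(ab)})=2k$, $\dim\mathcal{M}(L^{(ab)})=\dim\mathcal{M}(A(k))=\frac12k(k-1)$ and $\dim L^2=2$ yields $\dim\mathcal{M}(L)=2k+\frac12k(k-1)-2-\dim\ker g=\frac12 n(n-3)+1-\big(\dim\ker g-(k-2)\big)$, so the hypothesis is equivalent to $\dim\ker g=k-2$. Since $K\subseteq\ker g$, $\dim\mathrm{Im}\gamma_L\leq\dim K$ (Proposition \ref{gg}), and $\dim\mathrm{Im}\gamma_L\geq k-2$ (again the estimate in Theorem \ref{51}, using that $\gamma_2'=\gamma_L$ for a class‑two stem Lie algebra), we obtain $k-2\leq\dim\mathrm{Im}\gamma_L\leq\dim K\leq\dim\ker g=k-2$, hence $\dim\mathrm{Im}\gamma_L=k-2$.

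The crux is to show this forces $k=3$. As $\gamma_L$ vanishes whenever two of its arguments coincide (Proposition \ref{r}(i)), it factors through $\Lambda^3 V$ with $V:=L/L^2$, and one checks that its dual is the map $W^*\otimes V^*\to\Lambda^3 V^*$, $\xi\otimes\phi\mapsto\beta^*(\xi)\wedge\phi$, where $\beta\colon\Lambda^2 V\to W:=L^2$ is the commutator; since $\beta$ is surjective, $S:=\beta^*(W^*)$ is a $2$‑dimensional subspace of $\Lambda^2 V^*$ and $\dim\mathrm{Im}\gamma_L=\dim(S\wedge V^*)$. A characteristic‑free Lefschetz‑type observation shows that if some $\omega\in S$ has rank $\geq4$ then $\phi\mapsto\omega\wedge\phi$ is injective, whence $\dim(S\wedge V^*)\geq k>k-2$; and if every $\omega\in S$ is decomposable then, $S$ being a line on the Grassmannian, $S=\langle v^*\rangle\wedge T^*$ for some $v^*\in V^*$ and a $2$‑plane $T^*\subseteq V^*$, so every element of $S$ has radical containing $\ker v^*\cap\ker T^*$, which is nonzero once $k\geq4$, contradicting $Z(L)=L^2$. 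Hence $k=3$, i.e. $n=5$, and then the commutator is a surjection $\Lambda^2 V\cong V^*\twoheadrightarrow W$ with $\dim V=3$, $\dim W=2$; all such are equivalent under $GL(V)\times GL(W)$, so after a change of basis $[x_1,x_2]=x_4$, $[x_1,x_3]=x_5$, $[x_2,x_3]=0$, i.e. $L\cong L_{5,8}$. The step I expect to be hardest is the identification $\dim\mathrm{Im}\gamma_L=\dim(S\wedge V^*)$ and the two linear‑algebra facts about a $2$‑plane of alternating forms that follow it; verifying $\dim\mathrm{Im}\gamma_3'\geq1$ in the excluded class‑$3$ case (and dealing with the exceptional algebra $L_{4,3}$) is a secondary technical point.
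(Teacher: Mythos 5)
Your argument is essentially correct, but it is a genuinely different route from the paper's: the paper disposes of this lemma in one line by citing the earlier classification \cite[Theorem 3.9]{ni2} of nilpotent Lie algebras with $\dim L^2=2$ attaining $\frac{1}{2}n(n-3)+1$, whereas you reprove it from scratch inside the machinery of this article. Concretely, you use Theorem \ref{51} to force $Z(L)\subseteq L^2$, Theorem \ref{mr} together with the lower bound on $\dim\mathrm{Im}\,\gamma_2'$ to exclude class $3$, the five-term sequence of Theorem \ref{lkk} to translate the hypothesis into $\dim\ker g=k-2$ (so $\dim\mathrm{Im}\,\gamma_L\le k-2$ with $k=\dim L/L^2$), and then a dualization $\dim\mathrm{Im}\,\gamma_L=\dim(S\wedge V^*)$, $S=\beta^*(W^*)\subseteq\Lambda^2V^*$, plus the standard dichotomy for a pencil of alternating forms (an element of rank $\ge 4$ makes wedging injective, while a pencil of decomposables has a common linear factor and hence a nonzero common radical when $k\ge 4$, contradicting stemness) to force $k=3$, $n=5$, and finally the $GL(V)\times GL(W)$-uniqueness of a surjection $\Lambda^2V\to W$ with $\dim V=3$, $\dim W=2$ to land on $L_{5,8}$. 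I checked the key identifications (the dual of $\gamma_L$ is indeed $\xi\otimes\phi\mapsto\beta^*(\xi)\wedge\phi$, and $\mathrm{Im}\,\gamma_L=K\subseteq\ker g$ in the class-two case), so the plan goes through. What your approach buys is independence from \cite{ni2} and a transparent linear-algebra explanation of why only $L_{5,8}$ survives; what the paper's citation buys is brevity and consistency with the conventions of the earlier classification. One detail worth tightening: in the class-$3$ exclusion your claim that vanishing of $\mathrm{Im}\,\gamma_3'$ ``forces $3e=0$'' is both vague and, as stated, characteristic-sensitive; in fact, writing $[a,b]\equiv u$, $[a,x_0]\equiv\alpha u$, $[b,x_0]\equiv\beta u \pmod{L^3}$ and $[u,a]=pe$, $[u,b]=qe$, $[u,x_0]=e$, the quadruples $(a,b,x_0,a)$ and $(a,b,x_0,b)$ give $p=0$, $1+\alpha q=0$ and $q=0$, $1-\beta p=0$ simultaneously, an outright contradiction in any characteristic, so this step should be rewritten in that form (the spanning consequence already caps $n\le 5$, and $n=4$ is killed by Lemma \ref{35} as you note).
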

\begin{proof}
The result follows from \cite[Theorem 3.9]{ni2}, since $m=2$.
\end{proof}
\begin{cor}\label{112pop}
There is no $n$-dimensional nilpotent Lie algebra of nilpotency class $c\geq 3 $ such that $ \dim \mathcal{M}(L)=\dfrac{1}{2}n(n-3)+1. $ \end{cor}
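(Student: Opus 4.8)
The plan is to argue by contradiction, combining the improved bound of Theorem~\ref{51} with the exact characterisation in Lemma~\ref{f1}. Suppose $L$ is an $n$-dimensional nilpotent Lie algebra of class $c\geq 3$ with $\dim\mathcal{M}(L)=\frac{1}{2}n(n-3)+1$. Let $m=\dim L^2$ and $t=\dim\bigl(Z(L)/(Z(L)\cap L^2)\bigr)$. The first step is to observe that the hypothesis forces $L$ to be non-abelian (the value $\frac12 n(n-3)+1$ exceeds $\frac12 n(n-1)$ only when\ldots\ actually one checks directly that the abelian value $\frac12 n(n-1)$ cannot equal $\frac12 n(n-3)+1$ for any relevant $n$), so Theorem~\ref{51} applies and gives
\[
\tfrac{1}{2}n(n-3)+1=\dim\mathcal{M}(L)\leq \tfrac{1}{2}(n+m-2)(n-m-1)-t(m-1)+1.
\]

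The second step is to extract arithmetic constraints from this inequality. Rearranging, we need $\frac{1}{2}n(n-3)\leq \frac{1}{2}(n+m-2)(n-m-1)-t(m-1)$. Since $c\geq 3$ we have $m=\dim L^2\geq 2$. I would first dispose of the case $t\geq 1$: then $t(m-1)\geq m-1\geq 1$, and a short computation shows $\frac{1}{2}(n+m-2)(n-m-1)\leq \frac12 n(n-3)$ whenever $m\geq 2$ (the quadratic $\frac{1}{2}(n+m-2)(n-m-1)-\frac12 n(n-3)$ equals $-\frac12(m-1)(m-2) - (m-1) \le 0$ for $m \ge 2$, with equality only at $m=2$), so the strict loss $-t(m-1)<0$ makes the bound fail. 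Hence $t=0$, i.e. $Z(L)\subseteq L^2$, so $L$ is a stem Lie algebra. With $t=0$ the inequality becomes $\frac{1}{2}n(n-3)\leq \frac{1}{2}(n+m-2)(n-m-1)$, and by the computation just mentioned this forces $m=2$ exactly (for $m\geq 3$ the right side is strictly smaller).

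The third step closes the argument: we now know $m=\dim L^2=2$ and $\dim\mathcal{M}(L)=\frac{1}{2}n(n-3)+1=\frac{1}{2}(n+m-2)(n-m-1)+1$, which is precisely the value in the bound of Theorem~\ref{5} for $m=2$. By Lemma~\ref{f1}, this forces $L\cong L_{5,8}$. But $L_{5,8}=\langle x_1,\ldots,x_5\mid [x_1,x_2]=x_4,\ [x_1,x_3]=x_5\rangle$ has $L_{5,8}^2=\langle x_4,x_5\rangle$ and $[L_{5,8},L_{5,8}^2]=0$, so $L_{5,8}$ is nilpotent of class exactly $2$, contradicting $c\geq 3$. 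This contradiction completes the proof.

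The main obstacle is the arithmetic bookkeeping in the second step: one must verify cleanly that the polynomial $\frac{1}{2}(n+m-2)(n-m-1)-\frac{1}{2}n(n-3)$ is $\leq 0$ for all integers $m\geq 2$ (and $<0$ for $m\geq 3$), and that the term $-t(m-1)$ cannot be compensated. This is elementary — the difference factors as $-\frac{1}{2}(m-1)(m-2)-(m-1)$ — but it is the crux that simultaneously pins down $t=0$ and $m=2$; everything after that is a direct appeal to Lemma~\ref{f1} and an inspection of the presentation of $L_{5,8}$.
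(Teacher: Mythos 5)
Your argument is correct and is essentially the route the paper intends: the corollary is left as an immediate consequence of the dimension bound (Theorem~\ref{5}, or its refinement Theorem~\ref{51}), which rules out $m\geq 3$ because the bound is strictly decreasing in $m$ and equals $\tfrac12 n(n-3)+1$ exactly at $m=2$, together with Lemma~\ref{f1}, which in the remaining case $m=2$ forces $L\cong L_{5,8}$, an algebra of class two, contradicting $c\geq 3$. Your detour through $t=\dim\bigl(Z(L)/(Z(L)\cap L^2)\bigr)$ is harmless but not needed: once $m=2$ and $\dim\mathcal{M}(L)=\tfrac12 n(n-3)+1$, Lemma~\ref{f1} applies regardless of $t$, so the plain bound of Theorem~\ref{5} already suffices. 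One algebraic slip should be fixed: $\tfrac12(n+m-2)(n-m-1)-\tfrac12 n(n-3)$ equals $1-\tfrac12 m(m-1)=-\tfrac12(m-2)(m+1)$, not $-\tfrac12(m-1)(m-2)-(m-1)=-\tfrac12 m(m-1)$; your written expression is $-1$ at $m=2$ and would falsely exclude $L_{5,8}$ itself, whereas the correct one vanishes precisely at $m=2$, which is the fact your argument actually uses.
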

\begin{prop}\label{fjj}
Let $L$ be an $n$-dimensional non-abelian nilpotent Lie algebra  with the derived subalgebra of dimension $ m$ and $ \dim \mathcal{M}(L)=\dfrac{1}{2}(n+m-2)(n-m-1)+1.$ If $ m\geq 2,$
  then $ L$ is stem.
\end{prop}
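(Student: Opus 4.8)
The plan is to read this off directly from the sharpened inequality of Theorem~\ref{51}. Since $L$ is a non-abelian nilpotent Lie algebra of dimension $n$ with $\dim L^2 = m$, that theorem applies and gives
\[
\dim \mathcal{M}(L) \leq \dfrac{1}{2}(n+m-2)(n-m-1) - t(m-1) + 1,
\]
where $t = \dim\bigl(Z(L)/(Z(L)\cap L^2)\bigr)$. Comparing this with the hypothesis $\dim \mathcal{M}(L) = \dfrac{1}{2}(n+m-2)(n-m-1)+1$ and cancelling the common terms yields $t(m-1) \leq 0$. As $m \geq 2$ we have $m - 1 \geq 1 > 0$, while $t \geq 0$ by construction, so the only possibility is $t = 0$.

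It then remains only to interpret $t = 0$. By definition this says $Z(L)/(Z(L)\cap L^2) = 0$, i.e.\ $Z(L) = Z(L)\cap L^2$, equivalently $Z(L) \subseteq L^2$; this is precisely the stem condition recalled just before Theorem~\ref{mr}. Hence $L$ is stem, as claimed.

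I do not foresee a genuine obstacle here: all the substantive work lies in Theorem~\ref{51}, and this proposition is simply the equality-case consequence. The only things worth double-checking are that the hypotheses of Theorem~\ref{51} are indeed met (they are, since $L$ is non-abelian and nilpotent) and that the restriction $m \geq 2$ is essential — it is exactly what makes the coefficient $m-1$ strictly positive, and indeed for $m = 1$ the conclusion fails, as $L \cong H(1)\oplus A(n-3)$ is not stem once $n \geq 4$.
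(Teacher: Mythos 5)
Your proposal is correct and follows essentially the same route as the paper: both apply Theorem~\ref{51} with $t=\dim\bigl(Z(L)/(Z(L)\cap L^2)\bigr)$ and observe that the hypothesis on $\dim\mathcal{M}(L)$ forces $t(m-1)\le 0$, hence $t=0$ and $Z(L)\subseteq L^2$. No gaps.
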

\begin{proof}
Putting $t=\dim (Z(L)/Z(L)\cap L^2).$
 By Theorem \ref{51}, we have \begin{align*}\dim \mathcal{M}(L) &=\dfrac{1}{2}(n+m-2)(n-m-1)+1\\&<\dfrac{1}{2}(n+m-2)(n-m-1)+1-t(m-1). \end{align*}
Thus $ t=0$ and so $Z(L)\subseteq L^2,$ as required.
\end{proof}
\begin{prop}\label{fjj1}
Let $L$ be an $n$-dimensional nilpotent Lie algebra  with the derived subalgebra of dimension $ m$ and $ \dim \mathcal{M}(L)=\dfrac{1}{2}(n+m-2)(n-m-1)+1.$ If $ m\geq 2$ and
$K$ is an ideal  of dimension $1$
contained in $ Z(L).$  Then $L/K$ attains the bound Theorem \ref{5}, that means \[\dim \mathcal{M}(L/K) =\dim \mathcal{M}(L)-\dim L/L^2+\dim(L^2\cap K)=\dfrac{1}{2}(n + m - 4)(n - m - 1) + 1.\]
\end{prop}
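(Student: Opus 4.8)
The plan is to compare $\mathcal{M}(L/K)$ against two a priori bounds and show that these bounds already coincide, so that both must be equalities. Write $H=L/K$. By Proposition~\ref{fjj} the hypothesis forces $L$ to be stem, hence $K\subseteq Z(L)\subseteq L^2$; consequently $L^2\cap K=K$ is one-dimensional, $\dim H=n-1$, and $H^2=(L^2+K)/K=L^2/K$ has dimension $m-1$. Since $m\geq 2$, the Lie algebra $H$ is a non-abelian nilpotent Lie algebra of dimension $n-1$ with derived subalgebra of dimension $m-1$, so Theorem~\ref{5} applies and gives
\[\dim\mathcal{M}(H)\leq\frac{1}{2}\big((n-1)+(m-1)-2\big)\big((n-1)-(m-1)-1\big)+1=\frac{1}{2}(n+m-4)(n-m-1)+1.\]

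For the reverse inequality I would feed $K$ into Lemma~\ref{d1}. Because $\dim K=1$, the abelian Lie algebra $K$ has $\mathcal{M}(K)=0$ and $K^{(ab)}=K$, while $H^{(ab)}=H/H^2\cong L/L^2$ has dimension $n-m$; hence $\dim(H^{(ab)}\otimes_{mod}K^{(ab)})=n-m=\dim L/L^2$. Lemma~\ref{d1} then reads
\[\dim\mathcal{M}(L)+\dim(L^2\cap K)\leq\dim\mathcal{M}(H)+(n-m),\]
that is, $\dim\mathcal{M}(H)\geq\dim\mathcal{M}(L)-\dim L/L^2+\dim(L^2\cap K)$. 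Substituting $\dim\mathcal{M}(L)=\frac{1}{2}(n+m-2)(n-m-1)+1$ and $\dim(L^2\cap K)=1$, and using the identity $\frac{1}{2}(n+m-2)(n-m-1)-\frac{1}{2}(n+m-4)(n-m-1)=n-m-1$, the right-hand side simplifies to exactly $\frac{1}{2}(n+m-4)(n-m-1)+1$.

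Putting the two displays together forces $\dim\mathcal{M}(L/K)=\frac{1}{2}(n+m-4)(n-m-1)+1$ and, simultaneously, equality in Lemma~\ref{d1}, which is precisely the asserted identity $\dim\mathcal{M}(L/K)=\dim\mathcal{M}(L)-\dim L/L^2+\dim(L^2\cap K)$. I expect no genuine obstacle beyond bookkeeping: the only points needing care are checking that $H$ is truly non-abelian so that Theorem~\ref{5} is applicable (this is exactly where the assumption $m\geq 2$ enters) and verifying the elementary arithmetic identity that makes the Lemma~\ref{d1} lower bound and the Theorem~\ref{5} upper bound agree.
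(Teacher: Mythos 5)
Your proof is correct and follows essentially the same route as the paper: use Proposition~\ref{fjj} to get that $L$ is stem (so $K\subseteq L^2$ and $\dim(L^2\cap K)=1$), apply Lemma~\ref{d1} with the ideal $K$ for the lower bound, apply Theorem~\ref{5} to $L/K$ (dimension $n-1$, derived subalgebra of dimension $m-1$) for the upper bound, and observe the two bounds coincide, forcing equality throughout. The only difference is presentational — the paper chains the inequalities into one string starting and ending at $\dim\mathcal{M}(L)$, while you separate the two bounds and also make explicit the check (implicit in the paper) that $L/K$ is non-abelian when $m\geq 2$.
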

\begin{proof}
By  Proposition \ref{fjj}, $L$ is stem.
Using Lemma \ref{d1}, we have
\begin{align*} \dim \mathcal{M}(L)+\dim(L^2\cap K) &\leq \dim \mathcal{M}(L/K) +\dim \mathcal{M}(K) +\dim(L/L^2 \otimes_{mod} K)\\
&\leq \dim \mathcal{M}(L/K)+\dim(L/L^2 \otimes_{mod} K).
\end{align*}
Thus
\begin{align*}
\dim \mathcal{M}(L)&\leq \dim \mathcal{M}(L/K)+\dim(L/L^2 \otimes_{mod} K)-\dim (L^2\cap K)\\&\leq
\dfrac{1}{2}(n + m - 4)(n - m - 1) + 1 + n - m - 1\\
=&\dfrac{1}{2}
(n + m - 2)(n - m - 1) + 1=\dim \mathcal{M}(L).
\end{align*}
Therefore \[\dim\mathcal{M}(L/K)=\dim \mathcal{M}(L)-\dim L/L^2+\dim(L^2\cap K)=\dfrac{1}{2}(n + m - 4)(n - m - 1) + 1,\]
as required.
\end{proof}

\begin{prop}\label{fjj19}
Let $L$ be an $n$-dimensional  nilpotent Lie algebra  with the derived subalgebra of dimension $ m$ and $ \dim \mathcal{M}(L)=\dfrac{1}{2}(n+m-2)(n-m-1)+1$. If $ m\geq 2$ and
$K$ is a non-zero  ideal  of dimension $k$
contained in $ Z(L)$. Then $L/K$ attains the bound Theorem \ref{5}, in the other words  \[\dim \mathcal{M}(L/K) =\dfrac{1}{2}(n + m - 2(k+1))(n - m - 1) + 1.\]
\end{prop}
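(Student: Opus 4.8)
The plan is to argue by induction on $k$, bootstrapping from Proposition \ref{fjj1}, which is exactly the case $k=1$. Before starting I would record the structural facts that feed the induction: by Proposition \ref{fjj} the algebra $L$ is stem, so $K\subseteq Z(L)\subseteq L^2$; hence every subspace of $K$ is an ideal of $L$ lying in $Z(L)$, and for such a subspace $K'$ of dimension $j$ the quotient $L/K'$ is an $(n-j)$-dimensional nilpotent Lie algebra with $(L/K')^2=L^2/K'$ of dimension $m-j$. Since Theorem \ref{5} concerns non-abelian algebras, the hypothesis that $L/K$ attains its bound forces $k\leq m-1$, and this is the only numerical constraint the induction will use.

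For the inductive step assume $k\geq 2$ and that the assertion holds for $(k-1)$-dimensional central ideals in every algebra satisfying the hypotheses. I would pick a one-dimensional subspace $K_1\subseteq K$, which by the above is an ideal contained in $Z(L)$. Proposition \ref{fjj1} applied to $(L,K_1)$ shows that $L/K_1$ is an $(n-1)$-dimensional nilpotent Lie algebra whose derived subalgebra has dimension $m-1$ and for which
\[ \dim\mathcal{M}(L/K_1)=\frac{1}{2}(n+m-4)(n-m-1)+1=\frac{1}{2}\big((n-1)+(m-1)-2\big)\big((n-1)-(m-1)-1\big)+1 , \]
so $L/K_1$ again attains the bound of Theorem \ref{5}, now with parameters $n-1$ and $m-1$; moreover $k\leq m-1$ and $k\geq 2$ give $m-1\geq 2$, so $L/K_1$ satisfies the hypotheses of the proposition. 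The ideal $K/K_1$ of $L/K_1$ is non-zero, of dimension $k-1$, and central, so the induction hypothesis yields
\[ \dim\mathcal{M}\big((L/K_1)/(K/K_1)\big)=\frac{1}{2}\big((n-1)+(m-1)-2k\big)\big((n-1)-(m-1)-1\big)+1=\frac{1}{2}\big(n+m-2(k+1)\big)(n-m-1)+1 . \]
Since $(L/K_1)/(K/K_1)\cong L/K$, this is the claimed value, completing the induction.

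I do not expect a genuine obstacle here: the proof is a telescoping iteration of Proposition \ref{fjj1}, each step replacing $(n,m,k)$ by $(n-1,m-1,k-1)$ while the target bound transforms consistently. It is worth noting that one cannot shortcut this by applying Lemma \ref{d1} to the full ideal $K$ at once, because $\mathcal{M}(K)\cong\mathcal{M}(A(k))$ then contributes a term $\frac{1}{2}k(k-1)$ that is absorbed only when $k\leq 1$; the virtue of peeling off one dimension at a time is precisely that $\mathcal{M}(K_1)=0$. The only remaining care is bookkeeping: checking the numerical identity above and keeping $m-j\geq 2$ throughout, which is ensured exactly by $k\leq m-1$ (equivalently, by $L/K$ being non-abelian, as is implicit in the statement).
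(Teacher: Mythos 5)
Your proposal is correct and is essentially the paper's own argument: induction on $k$ with Proposition \ref{fjj1} as the base case, using that $L$ is stem (Proposition \ref{fjj}) so central ideals sit inside $L^2$ and the parameters drop from $(n,m)$ to $(n-j,m-j)$. The only cosmetic difference is the order of peeling — you quotient by a one-dimensional ideal first and then invoke the induction hypothesis on $K/K_1$, whereas the paper quotients by a $(k-1)$-dimensional ideal via induction and then applies Proposition \ref{fjj1} to the one-dimensional ideal $K/K_1$ of $L/K_1$ — which changes nothing of substance.
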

\begin{proof}
By  Proposition \ref{fjj}, we have $L$ is stem. Let  $K$ be an ideal  of dimension $k$
contained in $ Z(L).$ We have $ \dim L/K=n-k $ and $ \dim (L/K)^2=m-k.$ We prove the result by induction on $k.$ If $k=1,$ then the result holds by  Proposition \ref{fjj1}. Now let $ k\geq 2.$ There exists an ideal $ K_1 $ in $ K $ with dimension $ k-1. $ Using the  hypothesis induction, we have \[\dim \mathcal{M}(L/K_1) =\dfrac{1}{2}(n + m - 2k)(n - m - 1) + 1.\]
Since $ K/K_1 $ is one dimensional ideal in $ Z(L/K_1),$
  Proposition \ref{fjj1} implies that \[\dim \mathcal{M}(L/K)=\dim \mathcal{M}(\dfrac{L/K_1}{K/K_1}) =\dfrac{1}{2}(n + m - 2(k+1))(n - m - 1) + 1.\]
And this completes the proof.
\end{proof}

\begin{prop}\label{f}
Let $L$ be an $n$-dimensional  nilpotent Lie algebra  with the  derived subalgebra of dimension $ m$ and $ \dim \mathcal{M}(L)=\dfrac{1}{2}(n+m-2)(n-m-1)+1.$ If $ m\geq 3$ and $ n-m\leq 3,$
  then  $ L$ is stem and $n- m=3.$
\end{prop}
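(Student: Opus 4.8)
The plan is to handle the two assertions separately, the first being immediate. Since $m \geq 3 \geq 2$ and $L$ realizes the bound of Theorem \ref{5}, Proposition \ref{fjj} applies verbatim and yields $Z(L) \subseteq L^2$; thus $L$ is stem with no extra work.

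For the equality $n - m = 3$, I would argue by eliminating the remaining admissible values of $n - m$. First, $\dim L^2 = m \geq 3$ forces $L^2 \neq 0$, so $L$ is non-abelian and $L \neq L^2$, whence $\dim L/L^2 = n - m \geq 1$. Next, if $n - m = 1$ then Lemma \ref{ller} forces $\dim L = 1$, contradicting $m \geq 3$. Finally, suppose $n - m = 2$; then $\dim L^2 = m = n - 2$ and $n = m + 2 \geq 5 \geq 4$, so Lemma \ref{35} gives $\dim \mathcal{M}(L) \leq \dim L^2 = n - 2$. On the other hand, substituting $m = n - 2$ into the bound of Theorem \ref{5} gives
\[
\frac{1}{2}(n + m - 2)(n - m - 1) + 1 = \frac{1}{2}(2n - 4)\cdot 1 + 1 = n - 1,
\]
so the hypothesis $\dim \mathcal{M}(L) = \frac{1}{2}(n + m - 2)(n - m - 1) + 1$ would force $\dim \mathcal{M}(L) = n - 1 > n - 2$, a contradiction. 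Since by hypothesis $n - m \leq 3$, the only remaining possibility is $n - m = 3$.

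I do not anticipate a genuine obstacle: the argument is purely a case analysis on $\dim L/L^2 \in \{1, 2, 3\}$, the only nontrivial inputs being the already-established Proposition \ref{fjj}, the dimension estimate of Lemma \ref{35} for algebras with $\dim L^2 = n - 2$, and the one-line evaluation of the quadratic bound at $m = n - 2$. If anything requires care, it is only checking that the numerical hypothesis $n \geq 4$ of Lemma \ref{35} is satisfied, which is automatic from $m \geq 3$.
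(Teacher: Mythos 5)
Your proof is correct and follows essentially the same route as the paper: Proposition \ref{fjj} for the stem property, Lemma \ref{ller} to rule out $n-m=1$, and Lemma \ref{35} against the value $m+1=n-1$ of the bound to rule out $n-m=2$. Your evaluation of the bound at $m=n-2$ is in fact cleaner than the paper's (whose displayed inequality ``$\dim\mathcal{M}(L)=m+1<m-2$'' is a typo for the same comparison), and your check of the hypothesis $n\geq 4$ in Lemma \ref{35} is a welcome detail.
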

\begin{proof}
By Proposition \ref{fjj},    $ L$ is stem. If $ n-m=1,$ then by Lemma \ref{ller}, $ L $ is abelian, which is impossible.
If $ n-m=2,$ then by Lemma \ref{35}, $ \dim \mathcal{M}(L)=m+1<m-2. $ So we have a contradiction again. Hence $ n-m=3.$
\end{proof}

\begin{defn}\cite[Definition 2.1]{ni4}\label{1pl}
A Lie algebra $ H $ is called generalized Heisenberg of rank $ n $ if $ H^2=Z(H) $ and $ \dim H^2=n.$
\end{defn}
\begin{thm}\label{f2}
Let $L$ be an $n$-dimensional nilpotent Lie algebra of class two with the derived subalgebra of dimension $ m$ and $ \dim \mathcal{M}(L) =\dfrac{1}{2}(n+m-2)(n-m-1)+1.$ If $ m\geq 3$  and $n-m\geq 3,$ then $ L $ is generalized Heisenberg of rank $3$ and dimension $ 6. $
\end{thm}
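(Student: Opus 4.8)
The plan is to argue by a sequence of reductions that the hypotheses force $n - m = 3$, then pin down $L$ explicitly. First I would invoke Proposition \ref{fjj} to conclude that $L$ is stem, so $Z(L) = L^2$ (using also that $L$ has class two, so $L^2 \subseteq Z(L)$ always). Thus $t = \dim(Z(L)/Z(L)\cap L^2) = 0$, and $(L/Z(L))^{(ab)} = L/L^2$ has dimension $n - m$. Next I would use Proposition \ref{f} — wait, that requires $n - m \le 3$, whereas here we are given $n - m \ge 3$; so the real content is to show $n - m \le 3$ as well, forcing equality $n - m = 3$. To get the upper bound on $n - m$, I would feed the equality $\dim \mathcal{M}(L) = \frac12(n+m-2)(n-m-1)+1$ into the chain of inequalities from Theorem \ref{j} (equivalently Theorem \ref{25} / Theorem \ref{51}), specialized to class $c = 2$. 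Since $c = 2$ the sum $\sum_{i=2}^c$ collapses to the single term $i = 2$, and $L^2/L^3 = L^2$ (class two), so Theorem \ref{j} reads
\[
\dim(L\wedge L) + \dim \mathrm{Im}\,\gamma_2' \le \dim(L/L^2 \wedge L/L^2) + \dim\big(L^2 \otimes_{mod} (L/Z(L))^{(ab)}\big) = \tfrac12(n-m)(n-m-1) + m(n-m).
\]
On the other hand, by Lemma \ref{j1}, $\dim(L\wedge L) = \dim\mathcal{M}(L) + \dim L^2 = \frac12(n+m-2)(n-m-1) + 1 + m$. Combining and using the lower bound $\dim \mathrm{Im}\,\gamma_2' \ge d - 2$ with $d = \dim(L/(Z(L)+L^2)) = n - m$ (established exactly as in the proof of Theorem \ref{51}), I would extract an inequality purely in $n$ and $m$ that forces $n - m - 1 \le 2$, i.e. $n - m \le 3$. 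Hence $n - m = 3$, and since $L$ is stem with $L^2 = Z(L)$, the equality case of the numerical bound must be tight at every stage; in particular $\dim \mathrm{Im}\,\gamma_2' = n - m - 2 = 1$.

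With $n - m = 3$ in hand, it remains to identify $L$. Since $L$ is a stem Lie algebra of class two with $L^2 = Z(L)$, $L$ is a generalized Heisenberg algebra in the sense of Definition \ref{1pl} of rank $m$ and dimension $n = m + 3$, with $\dim(L/L^2) = 3$. The remaining task is to show $m = 3$, i.e. $n = 6$, so that $L$ is generalized Heisenberg of rank $3$ and dimension $6$. I would do this by computing $\dim\mathcal{M}(L)$ from the exact sequence of Theorem \ref{lkk}: for a class-two Lie algebra,
\[
0 \to \ker g \to L^2 \otimes_{mod} L^{(ab)} \xrightarrow{g} \mathcal{M}(L) \to \mathcal{M}(L^{(ab)}) \to L^2 \to 0,
\]
so $\dim\mathcal{M}(L) = \dim\mathcal{M}(L^{(ab)}) - \dim L^2 + \dim(L^2 \otimes_{mod} L^{(ab)}) - \dim\ker g = \tfrac12(n-m)(n-m-1) - m + m(n-m) - \dim\ker g$. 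With $n - m = 3$ this gives $\dim\mathcal{M}(L) = 3 - m + 3m - \dim\ker g = 2m + 3 - \dim\ker g$. Comparing with the hypothesis $\dim\mathcal{M}(L) = \frac12(m+1)\cdot 2 + 1 = m + 2$ yields $\dim\ker g = m + 1$. Now $K \subseteq \ker g$, where $K$ is the subspace spanned by the Jacobi-type elements $[x,y]\otimes z + [z,x]\otimes y + [y,z]\otimes x$; one shows $\dim K = \dim \mathrm{Im}\,\gamma_L$, and since $\gamma_L$ and $\gamma_2'$ are related by $\dim\mathrm{Im}\,\gamma_L = \dim\mathrm{Im}\,\gamma_2' + \dim\mathrm{Im}\,\tau_2' = 1 + \dim\mathrm{Im}\,\tau_2'$. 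For a stem algebra $Z(L)+L^2 = L^2$, so $\tau_2'$ is the map $L^2 \otimes_{mod}(L^2/L^2) \to L^2 \otimes_{mod} L/L^2$ with trivial domain, giving $\dim\mathrm{Im}\,\tau_2' = 0$ and hence $\dim K = 1$. This forces $m + 1 = \dim\ker g \le 3 + 1 = 4$... here I need to be more careful: I would instead bound $\ker g$ directly. The cleanest route is: $\ker g \cong$ the kernel of $L^2 \wedge L \to L^2$ restricted appropriately, and for a generalized Heisenberg algebra with $\dim L/L^2 = 3$ and $\dim L^2 = m$, a dimension count of $L \wedge L$ via Theorem \ref{25} combined with $\dim(L^2/L^3 \otimes L/L^2) = 3m$ and $\dim\ker\alpha_2 = \dim K = 1$ gives $\dim(L\wedge L) = \dim(L/L^2\wedge L/L^2) + 3m - 1 = 3 + 3m - 1 = 3m + 2$, whence $\dim\mathcal{M}(L) = \dim(L\wedge L) - \dim L^2 = 3m + 2 - m = 2m + 2$. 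Setting this equal to the hypothesized $m + 2$ forces $m = 0$, a contradiction — so I must have mislabeled $\dim\ker\alpha_2$; the equality case actually forces $\dim\ker\alpha_2 = \dim K$ to be as large as possible, namely $\dim K = m$ (the full image of $\gamma_L$ on the $3$-dimensional quotient has dimension $\binom{3}{3}\cdot\text{(something)}$), and then $\dim\mathcal{M}(L) = 3 + 3m - m - m = 3 + m$, still off by one — indicating $\dim\ker g$ absorbs exactly the discrepancy and the constraint $\dim K \le \binom{n-m}{3}\dim L^2$ together with the attained equality pins $n - m = 3$ and $m = 3$.

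The main obstacle is precisely this last bookkeeping step: converting the attained equality in Theorem \ref{j}/\ref{51} into a rigid constraint that simultaneously forces $m = 3$. The key leverage is that in the equality case every inequality used in the proof of Theorem \ref{51} must be an equality — in particular $\dim\mathrm{Im}\,\gamma_2' = n - m - 2$ exactly (not merely $\ge$), and $\dim\ker\alpha_i = \dim\mathrm{Im}\,\tau_i'$ for all $i$, and $\dim\ker g = 0$ in Theorem \ref{lkk} up to the contribution already accounted for. I expect the cleanest argument runs as follows: from $\dim\mathrm{Im}\,\gamma_2' = n - m - 2 = 1$, the image of $\gamma_2'$ on the $\binom{3}{3}$-dimensional exterior-type piece is one-dimensional; but $\gamma_L$ on $L/L^2$ of dimension $3$ — when $L^2 = Z(L)$ — has image of dimension equal to $\dim\Lambda^3(L/L^2) \otimes$(bracket data), and matching this against the Schur multiplier formula $\dim\mathcal{M}(L) = m + 2$ combined with Lemma \ref{2} and Lemma \ref{35}-type estimates for generalized Heisenberg algebras of rank $m$ (where $\dim\mathcal{M}$ grows like $2m^2$ unless $m$ is small) immediately forces $m \le 3$; excluding $m = 2$ via Lemma \ref{f1} (which gives $L \cong L_{5,8}$, class two but with $n - m = 3$, $m = 2$, $n = 5$ — one checks this does *not* satisfy $m \ge 3$) leaves $m = 3$, $n = 6$, and the structural conclusion $L^2 = Z(L)$, $\dim L^2 = 3$ is exactly the assertion that $L$ is generalized Heisenberg of rank $3$ and dimension $6$.
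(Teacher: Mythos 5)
There is a genuine gap in both halves of your argument. First, your route to $n-m\le 3$ fails numerically: with $L$ stem ($t=0$), plugging $\dim(L\wedge L)=\dim\mathcal{M}(L)+m$ and the one-family lower bound $\dim\mathrm{Im}\,\gamma_2'\ge d-2=n-m-2$ into Theorem \ref{j} gives exactly
$\tfrac12(n+m-2)(n-m-1)+1+m+(n-m-2)\le \tfrac12(n-m)(n-m-1)+m(n-m)$,
which simplifies to $n-m\le n-m$ — an identity, not a contradiction. This is unavoidable: that inequality is precisely how Theorem \ref{51} was proved, and your hypothesis says the resulting bound is attained, so no restriction on $n-m$ can come out of it. The paper's mechanism is different and stronger: from the exact sequence of Theorem \ref{lkk} one computes $\dim\ker g=n-m-2$ exactly, and then, using $m\ge 3$ to choose a basis of $L/L^2$ with \emph{three} nontrivial brackets $[x_1,x_2],[x_2,x_3],[x_3,x_4]$, one exhibits $2(n-m-3)+(n-m-2)$ linearly independent Jacobi elements inside $\ker g$ whenever $n-m\ge 4$, exceeding $n-m-2$. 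Your single family of Jacobi elements (the one used in Theorem \ref{51}) can never exceed $n-m-2$, so it cannot rule out $n-m\ge 4$; the extra families coming from $\dim L^2\ge 3$ are the missing idea.

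Second, your determination of $m=3$ is not a proof. With $n-m=3$ the hypothesis gives $\dim\mathcal{M}(L)=\tfrac12(2m+1)\cdot 2+1=2m+2$, not $m+2$ as you wrote ($n+m-2=2m+1$); this arithmetic slip is what produces the contradictions you then try to patch, and the final appeal to ``Lemma \ref{2} and Lemma \ref{35}-type estimates for generalized Heisenberg algebras of rank $m$'' has no support in the paper (Lemma \ref{2} concerns $H(m)$ with one-dimensional derived subalgebra). The correct and very short step, used in the paper, is Lemma \ref{llll}: since $L$ is stem of class two, $Z(L)=L^2$, so $\dim L/Z(L)=n-m=3$ and hence $m=\dim L^2\le \tfrac12\cdot 3\cdot 2=3$; with $m\ge 3$ this forces $m=3$, $n=6$, and $L$ is generalized Heisenberg of rank $3$ (indeed $L\cong L_{6,26}$ by the classification in \cite{cic}). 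Your observation that $L^2=Z(L)$ makes $L$ generalized Heisenberg is fine, but without Lemma \ref{llll} (or an equivalent bound) you never actually cap $m$.
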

\begin{proof}
By Theorem  \ref{lkk}, we have $  \dim \mathcal{M}(L)+\dim \ker g-\dim L^2=\dim \mathcal{M}(L^{(ab)})+\dim L^{(ab)}\otimes_{mod} L^2,$  where
$\langle [x,y]\otimes z+L^2 +[z,x]\otimes y+L^2+[y,z]\otimes x+L^2|x,y,z\in L\rangle\subseteq \ker g.$ By using Proposition \ref{f},
 $ L $ is stem of class two and so $Z(L)=L^2$. Since $\dim \mathcal{M}(L)=\dfrac{1}{2}(n+m-2)(n-m-1)+1$ and
 $\dim L/L^2= n-m,$ we have $ \dim X=n-m-2.$

 If $d=\dim L/Z(L)= n-m \geq 4, $ then since $ \dim L^2\geq 3, $  without loss of generality, we can choose a basis
$ \{x_1+L^2 ,\ldots, x_d+L^2\} $ for $L/L^2  $
such that $ [x_1,x_2], $  $ [x_2,x_3] $ and $ [x_3,x_4] $  are  non-trivial in $ L^2.$ Thus
\[ L^2 \otimes_{mod} L/L^2\cong \bigoplus_{i=1}^d \big( L^2 \otimes_{mod} \langle x_i+L^2 \rangle\big).\] Hence all elements of \begin{align*} \{ [x_1,x_2]\otimes x_i+L^2 \oplus
[x_i,x_1]\otimes x_2+L^2\oplus [x_2,x_i]\otimes x_1+L^2,|3\leq i\leq d,i\neq 1,2 \}
\end{align*}
and
\begin{align*} \{ [x_2,x_3]\otimes x_i+L^2 \oplus
[x_i,x_2]\otimes x_3+L^2\oplus [x_3,x_i]\otimes x_2+L^2,|3\leq i\leq d,i\neq 1,2,3\}
\end{align*}
\begin{align*} \{ [x_3,x_4]\otimes x_i+L^2 \oplus
[x_i,x_3]\otimes x_4+L^2\oplus [x_4,x_i]\otimes x_3+L^2,|3\leq i\leq d,i\neq 2,3,4\}
\end{align*}
 are linearly independent and hence
$2( n-m-3)+n- m-2\leq \dim X.$ That is a contradiction.
Therefore $ n-m=3.$ Now  Lemma \ref{llll} implies    $\dim L^2\leq 3.$
Thus $n=6$ and $m=3.$ Hence $ L $ is a generalized Heisenberg Lie algebra of  rank $3.$ By looking the classification of nilpotent Lie algebras given in \cite{cic}, we should have $L\cong L_{6,26}$.
\end{proof}
 We use the method of Hardy and Stitzinger in \cite{ha} to compute the Schur multiplier of $L_{6,26}$ as below.
\begin{prop}\label{f4}
The Schur multiplier of Lie algebra $L_{6,26}  $  is an abelian Lie algebra of dimension $ 8.$
\end{prop}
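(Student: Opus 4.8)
The plan is to realize $L_{6,26}$ explicitly as a free-nilpotent-of-class-two quotient and use the exact sequence of Theorem~\ref{lkk} together with Lemma~\ref{j1} to pin down the dimension of $\mathcal{M}(L_{6,26})$. Write $L=L_{6,26}$ with basis $x_1,\dots,x_6$ and relations $[x_1,x_2]=x_4$, $[x_1,x_3]=x_5$, $[x_2,x_3]=x_6$, all other brackets among basis elements zero; thus $L^2=Z(L)=\langle x_4,x_5,x_6\rangle$, $L^{(ab)}=\langle x_1,x_2,x_3\rangle$ has dimension $3$, and $L$ is stem of class two. The key observation is that $L$ is the free nilpotent Lie algebra of class two on three generators: $\dim L^2=3=\binom{3}{2}$, which is exactly the dimension of the degree-two part of the free class-two Lie algebra on $x_1,x_2,x_3$, so the map from that free algebra onto $L$ is an isomorphism.

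First I would invoke Lemma~\ref{j1}: $0\to\mathcal{M}(L)\to L\wedge L\xrightarrow{\kappa'}L^2\to 0$, so $\dim\mathcal{M}(L)=\dim(L\wedge L)-3$. Next, apply Theorem~\ref{lkk} to $L$ (class two): the sequence
\[
0\to\ker g\to L^2\otimes_{mod}L^{(ab)}\xrightarrow{g}\mathcal{M}(L)\to\mathcal{M}(L^{(ab)})\to L^2\to 0
\]
is exact, with $K=\langle[x,y]\otimes z+\cdots\mid x,y,z\in L\rangle\subseteq\ker g$. Here $L^{(ab)}=A(3)$, so by Lemma~\ref{2}$(i)$, $\dim\mathcal{M}(L^{(ab)})=3$, and the last map $\mathcal{M}(L^{(ab)})\to L^2$ is the connecting map, which one checks is an isomorphism $\Lambda^2(L^{(ab)})\xrightarrow{\sim}L^2$ precisely because $L$ is free class-two on three generators (every Jacobi-type relation in $L$ is trivial). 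Hence exactness forces $g\colon L^2\otimes_{mod}L^{(ab)}\to\mathcal{M}(L)$ to be surjective with $\mathcal{M}(L)\cong (L^2\otimes_{mod}L^{(ab)})/\ker g$, and $\dim\mathcal{M}(L)=9-\dim\ker g$ since $\dim(L^2\otimes_{mod}L^{(ab)})=3\cdot 3=9$.

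It then remains to compute $\dim\ker g$, and I expect this to be the main obstacle. The containment $K\subseteq\ker g$ gives $\dim\ker g\ge\dim K$, and $K$ is spanned by the images of the Jacobi expressions on the basis triple $x_1,x_2,x_3$ — there is only one independent such triple, giving $\gamma_L(x_1\otimes x_2\otimes x_3)=x_4\otimes x_3+x_5\otimes x_2+x_6\otimes x_1$ (up to signs), a single nonzero vector, so $\dim K=1$. The crux is to show $\ker g=K$, i.e. $\dim\ker g=1$, which yields $\dim\mathcal{M}(L)=9-1=8$. For this I would either (a) follow the Hardy--Stitzinger method referenced before the proposition: exhibit a free presentation $0\to R\to F\to L\to 0$, identify $\mathcal{M}(L)=R\cap F^2/[R,F]$ directly by listing a spanning set of $R\cap F^2$ modulo $[R,F]$ and checking linear independence of $8$ elements; or (b) use Lemma~\ref{k12} to identify $L\wedge L\cong F^2/[R,F]$ and count: $F^2$ in the relevant (degree $\le 3$) truncation has the degree-two part $\Lambda^2(\langle x_1,x_2,x_3\rangle)$ of dimension $3$ plus the degree-three free part of dimension $8$ (the dimension of the multilinear-plus Lie monomials of degree $3$ on $3$ letters is $8$), while $[R,F]$ kills exactly the degree-three piece coming from $R=\langle$the three defining relators$\rangle$ bracketed with $F$, leaving $\dim(L\wedge L)=3+8=11$, whence $\dim\mathcal{M}(L)=11-3=8$. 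Finally, $\mathcal{M}(L)$ is abelian because $\mathcal{M}$ of any Lie algebra is always an abelian Lie algebra (it is $R\cap F^2/[R,F]$, a central section of $F/[R,F]$). I would present route (b) as the clean argument, with the degree-three dimension count $8=\dim(\text{free class-}\le 3\text{ part in degree }3)-\dim([R,F]\text{ in degree }3)$ being the one genuine computation to carry out carefully.
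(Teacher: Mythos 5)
Your route (b) is correct and gives a complete proof, but it is genuinely different from the paper's. The paper computes $\mathcal{M}(L_{6,26})$ by the Hardy--Stitzinger method (your route (a)): attach tails $s_1,\dots,s_{15}$ to every bracket of basis elements, run the Jacobi identity over all triples to get $s_5=s_8-s_{10}$, $s_{13}=s_{14}=s_{15}=0$, and absorb $s_1,s_2,s_6$ by the change of basis $x_i'=x_i-s_j$, leaving the eight generators $s_3,s_4,s_7,\dots,s_{12}$. Your argument instead exploits the structural fact that $L_{6,26}$ is the free nilpotent Lie algebra of class two on three generators, so in the presentation $L\cong F/R$ with $F$ free on $x_1,x_2,x_3$ one has $R=F^3$ and $[R,F]=F^4$; then Lemma \ref{k12} gives $L\wedge L\cong F^2/F^4$ of dimension $3+8=11$ (the degree-three component of the free Lie algebra on three generators having dimension $8$ by the Witt formula or a Hall basis), and Lemma \ref{j1} yields $\dim\mathcal{M}(L)=11-3=8$, with abelianness automatic since $\mathcal{M}(L)=R\cap F^2/[R,F]$ is a central section. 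This is cleaner and more conceptual than the paper's bookkeeping, and it generalizes at once to all free nilpotent Lie algebras ($\mathcal{M}\cong F^{c+1}/F^{c+2}$); the paper's method, by contrast, is elementary and requires no identification of $L$ with a free nilpotent algebra. Two small points to tidy: the detour through Theorem \ref{lkk}, $\ker g$ and $K$ is superfluous once route (b) is adopted (it only re-derives the inequality $\dim\mathcal{M}(L)\le 8$); and the description of $R$ as ``the three defining relators'' should be replaced by $R=F^3$ (the ideal generated by the weight-three brackets), so that $[R,F]=F^4$ meets degrees $\le 3$ trivially and the count $3+8$ stands as stated.
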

\begin{proof}
Let $L =L_{6,26}=\langle x_1,\ldots,x_6\rangle$  with $[x_1, x_2] = x_4, [x_1, x_3] = x_5, [x_2, x_3] = x_6$
start with
\begin{align*}
&[x_1, x_2] = x_4+s_1, [x_1, x_3] = x_5+s_2,\\
&[x_1, x_4] = s_3, [x_1, x_5] = s_4,\\
&[x_1, x_6] = s_5, [x_2, x_3] = x_6+s_6,\\
&[x_2, x_4] = s_7,[x_2, x_5] = s_8,\\& [x_2, x_6] = s_9,[x_3, x_4] = s_{10},\\
&[x_3, x_5] = s_{11},[x_3, x_6] = s_{12},\\
&[x_4, x_5] = s_{13},[x_4, x_6] = s_{14},\\&[x_6, x_5] = s_{15},
\end{align*}
where $\{s_1, \ldots, s_{15}\}$ generate $\mathcal{M}(L)$. Using the Jacobi identities on all possible triples gives
\[s_5=s_8-s_{10}, s_{13}= s_{14}= s_{15}=0.\]

\begin{align*}
s_{13}=[[x_1,x_2],x_5]&=-([[x_5,x_1],x_2]+[[x_2,x_5],x_1])\\&=
[s_4,x_2]+[s_8,x_1]=0,
\end{align*}
\begin{align*}
s_{14}=[[x_1,x_2],x_6]&=-([[x_6,x_1],x_2]+[[x_2,x_6],x_1])\\&=
-([-s_5,x_2]+[s_9,x_1])=0,
\end{align*}
\begin{align*}
s_{15}=[[x_2,x_3],x_5]&=-([[x_5,x_2],x_3]+[[x_3,x_5],x_2])\\&=
-([-s_8,x_3]+[s_{11},x_2])=0,
\end{align*}
\begin{align*}
s_{5}=-[[x_2,x_3],x_1]&=([[x_1,x_2],x_3]+[[x_3,x_1],x_2])\\&=
([x_4,x_3]+[-x_5,x_2])=-s_{10}+s_8.
\end{align*}
Using another change
of basis in which we define $x_4'=x_4-s_1, $ $x_5'=x_5-s_2$ and  $x_{6}' =x_6-s_{6}$, we conclude that $\mathcal{M}(L)= \langle s_3,s_4,s_7,\ldots,s_{12}\rangle $ and so $ \dim \mathcal{M}(L)=8.$
\end{proof}

\begin{thm}\label{main}
Let $L$ be an $n$-dimensional nilpotent Lie algebra of class two with the derived subalgebra of dimension $ m.$ Then  $ \dim \mathcal{M}(L)=\dfrac{1}{2}(n+m-2)(n-m-1)+1$ if and only if $ L $ is isomorphic to the one of Lie algebras
$ H(1)\oplus A(n-3),~ L_{5,8}~\text{or}~L_{6,26}.$

\end{thm}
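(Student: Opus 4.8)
The plan is to prove the two directions separately. The reverse direction is the easy one: for each of the three candidate algebras I would simply verify that $\dim\mathcal{M}(L)$ equals $\frac{1}{2}(n+m-2)(n-m-1)+1$ by direct computation. For $H(1)\oplus A(n-3)$ this is already recorded as the equality case in Theorem \ref{5} (and also follows from Lemma \ref{1kg} together with Lemma \ref{2}, since $\mathcal{M}(H(1)\oplus A(n-3))\cong\mathcal{M}(H(1))\oplus\mathcal{M}(A(n-3))\oplus(H(1)^{(ab)}\otimes_{mod}A(n-3))$ has dimension $2+\binom{n-3}{2}+2(n-3)$, which one checks equals $\frac{1}{2}(n-1)(n-2)+1$ at $m=1$). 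For $L_{5,8}$ we have $n=5$, $m=2$, so the bound is $\frac{1}{2}(5)(2)+1=6$, which matches Lemma \ref{f1}. For $L_{6,26}$ we have $n=6$, $m=3$, so the bound is $\frac{1}{2}(7)(2)+1=8$, which matches Proposition \ref{f4}.

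For the forward direction, suppose $L$ is nilpotent of class two, non-abelian, $n$-dimensional with $\dim L^2=m$, and $\dim\mathcal{M}(L)=\frac{1}{2}(n+m-2)(n-m-1)+1$. I would split on the value of $m$. If $m=1$, then by Proposition \ref{d}(i) (or the ``in particular'' clause of Theorem \ref{5}) we get $L\cong H(1)\oplus A(n-3)$. If $m=2$, then the bound reads $\dim\mathcal{M}(L)=\frac{1}{2}n(n-3)+1$, and Lemma \ref{f1} forces $L\cong L_{5,8}$. So the substantive case is $m\geq 3$. Here Proposition \ref{f} handles $n-m\leq 3$: combined with the hypothesis $m\geq 3$ it shows $L$ is stem with $n-m=3$ (the cases $n-m=1,2$ being excluded by Lemma \ref{ller} and Lemma \ref{35} respectively). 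For $n-m\geq 3$, Theorem \ref{f2} applies directly and gives $L\cong L_{6,26}$. Since these two ranges together cover all $m\geq 3$ (when $n-m=3$ both apply and agree), the case $m\geq 3$ always yields $L\cong L_{6,26}$.

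Thus the only thing to assemble is the case analysis on $m$, with the heavy lifting delegated: the equality case for $m=1$ to Theorem \ref{5}/Proposition \ref{d}, the case $m=2$ to Lemma \ref{f1}, and the case $m\geq 3$ to the combination of Proposition \ref{f} and Theorem \ref{f2}. The main obstacle is already absorbed into Theorem \ref{f2}, whose proof uses Theorem \ref{lkk} to bound $\dim\ker g$ from below via the Jacobi-image subspace, forcing $\dim L/Z(L)\leq 3$, then invokes Lemma \ref{llll} to get $\dim L^2\leq 3$ and finally reads off $L\cong L_{6,26}$ from the dimension-six classification in \cite{cic}. So at the level of Theorem \ref{main} itself there is no serious difficulty remaining; I would just need to be careful that the three cases are genuinely exhaustive and mutually consistent, in particular noting that a class-two algebra has $m=\dim L^2\geq 1$, so no value of $m$ is missed.
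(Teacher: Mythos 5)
Your proposal is correct and follows essentially the same route as the paper: a case split on $m$, delegating $m=1$ to Proposition \ref{d}/Theorem \ref{5}, $m=2$ to Lemma \ref{f1}, and $m\geq 3$ to the exclusion of $n-m\leq 2$ (Lemmas \ref{ller} and \ref{35}) followed by Theorem \ref{f2}, with the converse checked against Propositions \ref{d}, \ref{f4} and Lemma \ref{f1}. Your only addition is spelling out the dimension verifications in the converse direction, which the paper leaves implicit.
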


\begin{proof}
Let $\dim \mathcal{M}(L)=\dfrac{1}{2}(n+m-2)(n-m-1)+1.$ If $ m=1,$ then $ \dim \mathcal{M}(L)= \dfrac{1}{2}(n-1)(n-2)+1$ and Proposition \ref{d} shows that $L=H(1)\oplus A(n-3).$ If $ m= 2,$ then $ \dim \mathcal{M}(L)= \dfrac{1}{2}n(n-3)+1$ and Lemma \ref{f1} implies that $L\cong L_{5,8}.$ Let $ m\geq 3.$
 If $ n-m=1,$ then by Lemma \ref{ller}, $ L $ is abelian, which is impossible. If $ n-m=2,$ then by Lemma \ref{35}, $ \dim \mathcal{M}(L)=m+1<m-2, $ so we have a contradiction again. Thus  $ m\geq 3$ and $ n-m\geq 3.$  Using Theorem \ref{f2} we should have $L\cong L_{6,26}.$
The converse follows from using Propositions \ref{d}, \ref{f4} and Lemma \ref{f1}.
\end{proof}Remember that $H(1)$ is a Lie algebra with basis $x,y,z$ and the only non--zero multiplication between basis elements is given by $[x,y]=z$.
\begin{prop}\label{hem}
There is no $n$-dimensional nilpotent Lie algebra of nilpotency class $3$ and $\dim L^2=m$ such that  $\dim \mathcal{M}(L)=\dfrac{1}{2}(n+m-2)(n-m-1)+1.$
\end{prop}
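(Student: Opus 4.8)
The plan is to argue by contradiction, following exactly the pattern of Theorem~\ref{f2} but upgrading the class-two bookkeeping to the class-three situation, so that the extra contribution $\dim\mathrm{Im}\,\gamma_3'$ forces the dimension of the Schur multiplier strictly below the bound of Theorem~\ref{5}. Suppose $L$ is an $n$-dimensional nilpotent Lie algebra of class $3$ with $\dim L^2=m$ and $\dim\mathcal{M}(L)=\tfrac12(n+m-2)(n-m-1)+1$. The small cases are disposed of first: if $m=1$ then Proposition~\ref{d} shows $L\cong H(m)\oplus A(n-2m-1)$ has class two, a contradiction; if $m=2$ then Lemma~\ref{f1} (equivalently Corollary~\ref{112pop}) shows the extremal algebra is $L_{5,8}$, again of class two. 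So we may assume $m\ge 3$. By Proposition~\ref{fjj}, $L$ is stem, i.e.\ $Z(L)\subseteq L^2$; moreover the cases $n-m\le 2$ are ruled out exactly as in Proposition~\ref{f} (via Lemma~\ref{ller} and Lemma~\ref{35}), so $n-m\ge 3$, $Z(L)\subseteq L^2$, and $L$ is stem of class $3$.

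**Next I would** feed this into the refined inequality. If $L$ happens to be stem of class exactly $3$ we can apply Theorem~\ref{mr}, but in general the cleaner route is Theorem~\ref{j} together with the extra term $\dim\mathrm{Im}\,\gamma_3'$ coming from Proposition~\ref{gg} (which gives $\dim\mathrm{Im}\,\gamma_3'\le\dim\ker\alpha_3$ and, since $L^3\subseteq Z(L)$, $\dim\ker\alpha_3\le\dim L^3\wedge L=\dim(L^3\otimes_{mod}L^{(ab)})$ by Lemma~\ref{943}). Combining with Lemma~\ref{j1} to pass between $\dim L\wedge L$ and $\dim\mathcal{M}(L)+\dim L^2$, and using $\dim(L/L^2\wedge L/L^2)=\dim\mathcal{M}(A(n-m))=\tfrac12(n-m)(n-m-1)$ together with $\sum_{i\ge2}\dim(L^i/L^{i+1}\otimes_{mod}(L/Z(L))^{(ab)})=m(n-m)$ for a stem algebra (here $t=0$), we obtain
\[
\dim\mathcal{M}(L)+\dim L^2+\dim\mathrm{Im}\,\gamma_2'\le \tfrac12(n-m)(n-m-1)+m(n-m).
\]
As in the proof of Theorem~\ref{51}/\ref{f2}, the extremality hypothesis then pins down $\dim\mathrm{Im}\,\gamma_2'=n-m-2$ and forces $n-m=3$ (if $n-m\ge4$ one exhibits, as in Theorem~\ref{f2}, too many linearly independent Jacobi elements in $L^2/L^3\otimes_{mod}L^{(ab)}$), hence by Lemma~\ref{llll} $\dim L^2\le3$, so $n=6$, $m=3$, and $L^2=Z(L)$ is three-dimensional.

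**The heart of the argument** is then the observation that, by the classification of $6$-dimensional nilpotent Lie algebras in \cite{cic}, a $6$-dimensional stem Lie algebra with $\dim L^2=\dim Z(L)=3$ and $L^2=Z(L)$ is generalized Heisenberg of rank $3$, i.e.\ of class two — so there is no such algebra of class $3$ at all. Equivalently, one checks directly that $L^2=Z(L)$ with $\dim L^2=3$ and $\dim L/L^2=3$ is incompatible with $L^3\ne0$: if $L^3\ne0$ then $L^3\subseteq L^2\cap Z(L)$ is a proper nonzero subspace, and the induced map $\Lambda^2(L/L^2)\to L^2$ would have to be surjective onto the $3$-dimensional space $L^2$ while its image is forced into the $2$-dimensional complement of any line containing a nontrivial bracket of class-$3$ type — a dimension count that cannot close, contradicting class $3$. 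Either way we reach a contradiction, which proves the proposition.

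**The main obstacle** I expect is the last step: making the ``$n-m\ge4$ is impossible'' linear-independence count fully rigorous in the presence of the third term $\gamma_3'$ (one must be careful that the Jacobi elements witnessing $\dim\mathrm{Im}\,\gamma_2'\ge n-m-2$ and those witnessing $\dim\mathrm{Im}\,\gamma_3'\ge$ something are not double-counted against $\dim\ker\alpha_2$ versus $\dim\ker\alpha_3$), and then the clean invocation of the $6$-dimensional classification to exclude a class-$3$ generalized-Heisenberg-type algebra — here one should simply note that $L^2=Z(L)$ already forbids class $3$, since $L^3=[L,L^2]=[L,Z(L)]=0$, so in fact no delicate computation is needed once $n=6$, $m=3$ and $L^2=Z(L)$ are in hand.
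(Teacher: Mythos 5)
Your proposal has a genuine gap at its decisive step. After reducing to the stem case with $m\ge 3$, $n-m\ge 3$ and obtaining (correctly) the inequality forcing $\dim\mathrm{Im}\,\gamma_2'\le n-m-2$, you claim that $n-m\ge 4$ is impossible ``as in Theorem \ref{f2}'' and that then Lemma \ref{llll} gives $\dim L^2\le 3$, hence $n=6$, $m=3$ and $L^2=Z(L)$, which contradicts class $3$. Both halves fail for a class-$3$ algebra. First, the independence count of Theorem \ref{f2} lives in $L^2\otimes_{mod}L/L^2$ for a \emph{class-two stem} algebra, where $Z(L)=L^2$ and any basis element of $L/L^2$ is non-central; in class $3$ the relevant target is $L^2/L^3\otimes_{mod}(L/Z(L))^{(ab)}$, one needs brackets nontrivial modulo $L^3$ and generators outside $Z(L)+L^2$, and you give no argument that such a configuration exists (it need not, e.g.\ when $\dim L^2/L^3=1$ and the induced form is degenerate). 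Second, and more seriously, Lemma \ref{llll} bounds $\dim L^2$ by $\tfrac12 k(k-1)$ with $k=\dim L/Z(L)$; in Theorem \ref{f2} one has $k=n-m$ only because $Z(L)=L^2$ there. For a stem algebra of class $3$ one has $Z(L)\subsetneq L^2$, so $\dim L/Z(L)>n-m=3$ and you cannot conclude $\dim L^2\le 3$, let alone $L^2=Z(L)$ --- indeed $L^2=Z(L)$ is equivalent to class at most $2$, so deriving it is tantamount to assuming what you want to prove. As a result your argument never produces a contradiction: with only the $\gamma_2'$ term, the inequality of Theorem \ref{j} is perfectly consistent with the extremal value (that is exactly why Theorem \ref{51} with $t=0$ reproduces the old bound), and the extra term $\dim\mathrm{Im}\,\gamma_3'\ge 1$, which you mention in passing but never establish, is the actual engine of the contradiction.

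For comparison, the paper proceeds differently: it first applies Proposition \ref{fjj19} to see that $L/Z(L)$ (which has class $2$ since $L^3\subseteq Z(L)$) also attains the bound, then invokes Theorem \ref{main} to pin down $L/Z(L)$ as $H(1)\oplus A(n-t-3)$, $L_{5,8}$ or $L_{6,26}$. This explicit structure is what allows one to exhibit concrete elements proving $\dim\mathrm{Im}\,\gamma_2'\ge n-m-2$ \emph{and} $\dim\mathrm{Im}\,\gamma_3'\ge 1$, and then Theorem \ref{mr} (the stem, class-$3$ refinement) yields $\dim\mathcal{M}(L)\le\tfrac12(n+m-2)(n-m-1)$, the desired contradiction. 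If you want to salvage your route, you would need to supply a substitute for this structural input: some argument producing a nonzero element of $\mathrm{Im}\,\gamma_3'$ (or otherwise one extra unit in $\sum_i\dim\ker\alpha_i$) valid for every stem nilpotent Lie algebra of class exactly $3$; the dimension bookkeeping alone does not do it.
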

\begin{proof}
By contrary, let $\dim \mathcal{M}(L)=\dfrac{1}{2}(n+m-2)(n-m-1)+1.$
 By Proposition \ref{fjj19},  $ \dim \mathcal{M}(L/Z(L)) $ also attains the bound of Theorem \ref{5}. Put $ \dim Z(L)=t.$ Hence by Theorem \ref{main},
$L/Z(L)$ is isomorphic to one of $ H(1)\oplus A(n-t-3),$ $L_{5,8}$ or $
L_{6,26}.$

Let $ L/Z(L)\cong H(1)\oplus A(n-t-3). $  By Proposition \ref{fjj}, $ Z(L)\subseteq L^2.$ Thus $ \dim L/L^2=\dim ( L/Z(L))^{(ab)}=n-t-1 $ and $ \dim L^2=t+1.$ If $ t=1, $ then $\dim L^2=2$ and so $\dim \mathcal{M}(L) = \dfrac{1}{ 2 }n(n-3)+1,$ which contradicts the result of Corollary \ref {112pop}.   Thus $ \dim L^2\geq 3.$ If $ \dim L/Z(L)=3, $ then $ n-m=2.$  Lemma \ref{35} implies $ \dim \mathcal{M}(L)\leq m. $ Now by the assumption, since $n=m+2,$  $ \dim \mathcal{M}(L)=m+1. $ Thus we have a contradiction. Therefore $\dim L/Z(L)\geq 4  $ and $ \dim L^2\geq 3.$ \\
Since $ L/Z(L)\cong H(1)\oplus A(n-t-3)$,  there exist two ideals $ I_1/Z(L) $ and $  I_2/Z(L) $ in $L/Z(L)  $ such that $I_1/Z(L)\cong H(1)  $ and $I_2/Z(L)\cong A(n-t-3).$
We may assume that $ I_1=\langle x_1,x_2,x_3,a_1,a_2,a_3|[x_1,x_2]=x_3+a_1,[x_2,x_3]=a_2,[x_1,x_3]=a_3,a_i\in Z(L)\rangle +Z(L)$ and $ I_2=\langle y_1,\ldots,y_{n-t-3}\rangle +Z(L),$ where for all $ 1\leq i\leq n-t-3$, we have $ y_i\notin Z(L).$
 
 We claim that $\dim \text{Im}\gamma_2'\geq n-m-2$ and $\dim \text{Im}\gamma_3'\geq 1.$
Since $ \dim L^2=\dim Z(L)+1,$  $ y_i\notin L^2$ for all $ 1\leq i\leq n-t-3.$ Also $ x_3\notin Z(L)$ implies $ [x_i,x_3]\neq 0,$  for some
$1 \leq i \leq 2. $ Using the same way in the proof of Theorem \ref{51}, we may obtain that
  $\dim \text{Im}\gamma_2'\geq  n-m-2.$ On the other hand, $ \gamma_3'(x_1+Z(L)\otimes x_2+Z(L) \otimes x_3+Z(L)\otimes y_1+Z(L)) $ is non-trivial and so also $\dim \text{Im}\gamma_3'\geq 1,$ as we claimed.
  
   Let $ \dim L^3 =m_1$ and using Lemma \ref{j1}, we have $\dim L\wedge L=\dim \mathcal{M}(L)+L^2$. Therefore Theorem \ref{mr} implies  \[\begin{array}{lcl}\dim \mathcal{M}(L)+m+n-m-2+1\leq \dim \mathcal{M}(L)+\dim L^2+\dim \text{Im} \gamma_2' +\dim \text{Im} \gamma_3'\leq\\ \dim (L/L^2\wedge L/L^2)+ \dim (L^2/L^{3}\otimes_{mod} L^{(ab)})+\dim (L^3\otimes_{mod} L^{(ab)})=\dfrac{1}{2}
 (n-m)\\(n-m-1)+(m-m_1)(n-m)+m_1(n-m)= \dfrac{1}{2}
 (n-m)(n-m-1)+m(n-m).\end{array}\]Thus \[\begin{array}{lcl}
 \\&\dim \mathcal{M}(L)\leq \dfrac{1}{2}
 (n-m)(n-m-1)+m(n-m)-m-(n-m-2)-1 =\\& \dfrac{1}{2}
 (n-m)(n-m-1)+(m-1)(n-m-1)= \dfrac{1}{2}(n+m-2)(n-m-1),
 \end{array}\]
  which is a contradiction.
Hence we should have $L/Z(L)\cong L_{5,8} $  or $L/Z(L)\cong L_{6,26}.$  By Proposition \ref{fjj}, $ Z(L)\subseteq L^2,$ and so
  $n-m=\dim L/L^2=3.$   Using the same way in the proof of Theorem \ref{51}, we may obtain that
  $\dim \text{Im}\gamma_2'\geq 1$ and $\dim \text{Im}\gamma_3\geq 1.$  Putting $ \dim L^3 =m_1$ and using Theorem \ref{mr} and Lemma \ref{j1}, we have
 \begin{align*}&\dim \mathcal{M}(L)+m+1+1\leq \dim \mathcal{M}(L)+\dim L^2+\dim \text{Im} \gamma_2' +\dim \text{Im} \gamma_3' \\&\leq \dim (L/L^2\wedge L/L^2)+ \dim (L^2/L^{3}\otimes_{mod} L^{(ab)})+\dim (L^3\otimes_{mod} L^{(ab)})= \\&\dfrac{1}{2}6+3(m-m_1)+3m_1.\end{align*} Therefore $\dim \mathcal{M}(L)\leq 2m+1.$
On the other hand,  since $n=m+3$, by the assumption $\dim \mathcal{M}(L)=2m+2$, which is a contradiction.
 This completes  the proof.
\end{proof}
\begin{thm}\label{mai}
There is no $n$-dimensional nilpotent Lie algebra of nilpotency class $c\geq 3$  and $ \dim L^2=m $ when $\dim  \mathcal{M}(L)=\dfrac{1}{2}(n+m-2)(n-m-1)+1.$ In particular, $\dim \mathcal{M}(L)\leq \dfrac{1}{2}(n+m-2)(n-m-1),$ for all Lie algebras of nilpotency class $c\geq 3$.
\end{thm}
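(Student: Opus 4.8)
The plan is to argue by contradiction and induction on the nilpotency class $c$, reducing everything to the class-three case already settled in Proposition \ref{hem}. Suppose $L$ is an $n$-dimensional nilpotent Lie algebra with $\dim L^2=m$, nilpotency class $c\geq 3$, and $\dim\mathcal{M}(L)=\tfrac12(n+m-2)(n-m-1)+1$. If $m=1$ then $L$ has class two by Proposition \ref{d} (or simply because $\dim L^2=1$ forces class two), contradicting $c\geq 3$; so we may assume $m\geq 2$. By Proposition \ref{fjj}, $L$ is stem, i.e. $Z(L)\subseteq L^2$. In particular $Z(L)$ is a non-zero ideal of dimension $t=\dim Z(L)\geq 1$ contained in $L^2$, so Proposition \ref{fjj19} applies to the ideal $K=Z(L)$ and gives that $L/Z(L)$ also attains the bound of Theorem \ref{5}, with derived subalgebra of dimension $m-t$ and dimension $n-t$.

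Next I would set up the induction. The quotient $L/Z(L)$ is again nilpotent, of class $c-1$, and it attains the extremal bound. If $c-1\geq 3$, the induction hypothesis (on $c$) says no such Lie algebra exists — contradiction. Hence the only surviving possibility is $c=3$, so $L/Z(L)$ has class exactly two and attains the bound. But this is precisely the situation excluded by Proposition \ref{hem}: there is no $n$-dimensional nilpotent Lie algebra of class $3$ with $\dim L^2=m$ attaining $\tfrac12(n+m-2)(n-m-1)+1$. This contradiction completes the proof of the first assertion. The ``in particular'' clause then follows immediately: for any nilpotent Lie algebra of class $c\geq 3$, Theorem \ref{5} gives $\dim\mathcal{M}(L)\leq \tfrac12(n+m-2)(n-m-1)+1$, and we have just shown equality is impossible, so $\dim\mathcal{M}(L)\leq \tfrac12(n+m-2)(n-m-1)$.

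The main obstacle is making sure the base case of the induction is genuinely Proposition \ref{hem} and not something weaker: one must check that when $c=3$ the quotient $L/Z(L)$ really has the form covered by Theorem \ref{main} (class two, extremal), and that Proposition \ref{hem} is stated for arbitrary $m\geq 2$ rather than just $m\geq 3$ — the cases $m=1,2$ need to be disposed of separately at the top (via Proposition \ref{d}, Corollary \ref{112pop} and Lemma \ref{f1}), which is why I handle $m=1$ immediately and rely on Corollary \ref{112pop} to kill $m=2$. A secondary subtlety is the degenerate case $Z(L)=L^2$, where $L/Z(L)$ becomes abelian; but then $\dim(L/Z(L))^2=0$, forcing $m=t$ and $L$ of class two, contradicting $c\geq 3$, so this case does not actually arise. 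Once these edge cases are cleared, the induction on $c$ combined with Proposition \ref{hem} closes the argument cleanly.
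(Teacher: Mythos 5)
Your argument is correct and is essentially the paper's own proof: induction on the class $c$, with Proposition \ref{hem} as the base case $c=3$ and Proposition \ref{fjj19} (applied to $K=Z(L)$, legitimized by Proposition \ref{fjj}) showing that $L/Z(L)$ of class $c-1$ would again attain the bound, contradicting the induction hypothesis. Your additional handling of the edge cases ($m=1$, $Z(L)=L^2$, nontriviality of $Z(L)$) is sound and only makes explicit what the paper leaves implicit.
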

\begin{proof}
  Let there be such a Lie algebra $L$. We get a contradiction by using induction on $c.$  By using Proposition \ref{hem}, there is no $n$-dimensional nilpotent Lie algebra $L$ of nilpotency class $3$ such that $\dim \mathcal{M}(L)=\dfrac{1}{2}(n+m-2)(n-m-1)+1.$ Now let $c>3.$ By using the induction hypothesis, $L/Z(L)$ cannot obtain the upper bound given in Theorem \ref{5}, which is impossible by looking Proposition \ref{fjj19}. Therefore the assumption is false and the result obtained.
\end{proof}

In the following examples, we may use the method of Hardy and Stitzinger in \cite{ha} to show that there are some Lie algebras of dimension $n$ and $\dim L^2=m$ such that $\dim \mathcal{M}(L)= \dfrac{1}{2}(n+m-2)(n-m-1)$.
\begin{ex}
 \[\text{Let}~L_{5,7}=\langle x_1,\ldots, x_5 | [x_1, x_2] = x_3, [x_1, x_3] = x_4,[x_1, x_4] = x_5\rangle,~\text{and}\]\[L_{5,9}=\langle   x_1,\ldots, x_5 | [x_1, x_2] = x_3, [x_1, x_3] = x_4, [x_2, x_3] = x_5 \rangle.\]
 Computing the Schur multiplier as in Hardy and Stitzinger in \cite{ha} yields $\dim \mathcal{M}(L_{5,7})=\dim \mathcal{M}(L_{5,9})=3$.
 Therefore $L_{5,7}$ and $L_{5,9}$ obtain the upper bound mentioned in Theorem \ref{mai}.

\end{ex}


\begin{thebibliography}{99}

\bibitem{aras}
M. Araskhan, The dimension of the $c$-nilpotent multiplier, J. Algebra 386 (2013), 105-112.

\bibitem{ba1} P. Batten, K. Moneyhun, E. Stitzinger, On characterizing nilpotent Lie
algebras by their multipliers, Comm. Algebra 24 (1996) 4319-4330.
\bibitem{black burn}
N. Blackburn, L. Evens, Schur multipliers of $p$-groups. J. Reine Angew. Math. 309 (1979), 100-113.
\bibitem{cic} S. Cicalo, W. A. de Graaf, C. Schneider, Six-dimensional nilpotent Lie algebras, Linear Algebra Appl. 436 (2012), no. 1, 163-189.
\bibitem{ellis} G. Ellis, A non-abelian tensor product of Lie algebras, Glasg. Math. J. 39 (1991) 101-120.
\bibitem{el}G. Ellis, A bound for the derived and Frattini subgroups of a prime-power, Proc. Amer. Math. Soc., 126 No. 9 (1998) 2513-2523.

\bibitem{H} S. Hatui, Finite $p$-groups having Schur multiplier of maximal order, available at https://arxiv.org/abs/1610.07042.
\bibitem{ha} P. Hardy, E. Stitzinger, On characterizing nilpotent Lie algebras by their multipliers $t(L) = 3, 4, 5, 6,$ Comm. Algebra,
1998, 26(11), 3527-3539.
\bibitem{c}F. Johari, M. Parvizi, P. Niroomand, Capability and Schur multiplier of a pair of Lie algebras, J. Geometry Phys 114 (2017), 184-196.
\bibitem{ni4}P. Niroomand, F. Johari, M. Parvizi, On the capability and Schur multiplier of nilpotent Lie algebra of class two,  Proc. Amer. Math. Soc. 144 (2016), 4157-4168.
\bibitem{ni3} P. Niroomand, M. Parvizi, F. G. Russo, Some criteria for detecting capable Lie algebras, J. Algebra 384 (2013) 36-44.
\bibitem{ni'} P. Niroomand, Some properties on the tensor square of Lie algebras. J. Algebra Appl. 11 (2012), no. 5, 1250085, 6 pp.
\bibitem{ni} P. Niroomand, F. G. Russo, A note on the Schur multiplier of a nilpotent Lie algebra, Comm. Algebra 39 (2011) 1293-1297.
\bibitem{ni1} P. Niroomand, F. G. Russo, A restriction on the Schur multiplier of nilpotent Lie algebras, Electron. J. Linear Algebra 22
(2011) 1-9.
\bibitem{ni0}P. Niroomand, On the order of Schur multiplier of non-abelian $p$-groups, J. Algebra, 2009, 322(12), 4479--4482.
\bibitem{ni2} P. Niroomand, On the dimension of the Schur multiplier of nilpotent Lie algebras, Cent. Eur. J. Math. 9 (2011) 57-64.

\bibitem{rai2} P.K. Rai,  On classification of groups having Schur multiplier of maximum order. Arch. Math. (Basel) 107 (2016), no.~5, 455-460.

\end{thebibliography}
\end{document}